\documentclass[12pt]{amsart}

\usepackage{hyperref}
\hypersetup{nesting=true,debug=true,naturalnames=true}
\usepackage{graphicx,amssymb,upref,units}

\usepackage{tikz}

\let\<\langle
\let\>\rangle

\let\uml\"
\newcommand{\mrev}[1]{\href{http://www.ams.org/mathscinet-getitem?mr=#1}{MR#1}}
\newcommand{\zbl}[1]{\href{http://www.emis.de/cgi-bin/MATH-item?#1}{Zbl #1}}


\usepackage{amsmath,amssymb,fullpage,xy,mathrsfs,amsthm,amsxtra,graphicx,verbatim}
\usepackage{hyperref}
\hypersetup{colorlinks=true,urlcolor=magenta,citecolor=magenta,linkcolor=blue}
\usepackage{enumerate}
\usepackage{colonequals}
\usepackage{multirow}

\usepackage[mathcal]{euscript}
\newcommand{\Gal}{\operatorname{Gal}}
\newcommand{\End}{\operatorname{End}}

\newcommand{\OK}{\mathcal{O}_{K}}

\newcommand{\Fq}{\mathbf{F}_{q}}
\newcommand{\Aut}{\operatorname{Aut}}

\newcommand{\Zl}{\mathbf{Z}_{\ell}}
\newcommand{\im}{\operatorname{im}}

\newcommand{\Z}{\mathbf{Z}}
\newcommand{\F}{\mathbf{F}}

\newcommand{\Q}{\mathbf{Q}}

\newcommand{\GL}{\operatorname{GL}}
\newcommand{\Mat}{\operatorname{M}}

\newcommand{\Frob}{\operatorname{Frob}}

\newcommand{\ddef}{\colonequals}
\newcommand{\disc}{\operatorname{disc}}

\newcommand{\tors}{\mathsf{tors}}

\newcommand{\sqf}{\mathsf{sqf}}

\DeclareMathOperator{\Prob}{Prob}

\numberwithin{equation}{subsection}

\theoremstyle{plain}
\newtheorem{thm}[equation]{Theorem}

\newtheorem{lem}[equation]{Lemma}
\newtheorem{defn}[equation]{Definition}
\newtheorem{cor}[equation]{Corollary}
\newtheorem{prop}[equation]{Proposition}

\newtheorem{conj}[equation]{Conjecture}

\theoremstyle{remark}
\newtheorem{rmk}[equation]{Remark}

\newtheorem{exm}[equation]{Example}

 \input xy
 \xyoption{all}




\begin{document}

\title{The probability of non-isomorphic group structures of isogenous elliptic curves in finite field extensions, I}

\author{John Cullinan}
\address{Department of Mathematics, Bard College, Annandale-On-Hudson, NY 12504, USA}
\email{cullinan@bard.edu}
\urladdr{\url{http://faculty.bard.edu/cullinan/}}

\author{Nathan Kaplan}
\address{Department of Mathematics, University of California, Irvine, CA 92697, USA}
\email{nckaplan@math.uci.edu }
\urladdr{\url{https://www.math.uci.edu/~nckaplan/}}

\begin{abstract}
Let $\ell$ be a prime number and let $E$ and $E'$ be $\ell$-isogenous elliptic curves defined over a finite field $k$ of characteristic $p \ne \ell$.  Suppose the groups $E(k)$ and $E'(k)$ are isomorphic, but $E(K) \not \simeq E'(K)$, where $K$ is an $\ell$-power extension of $k$.  In a previous work we have shown that, under mild rationality hypotheses, the case of interest is when $\ell=2$ and $K$ is the unique quadratic extension of $k$.
In this paper we study the likelihood of such an occurrence by fixing a pair of 2-isogenous elliptic curves $E$, $E'$ over $\Q$ and asking for the proportion of primes $p$ for which $E(\F_p) \simeq E'(\F_p)$ and $E(\F_{p^2}) \not \simeq E'(\F_{p^2})$.
\end{abstract}

\maketitle

\section{Introduction}

\subsection{Overview}

Let $E$ and $E'$ be elliptic curves defined over a finite field $k$.  If $E$ and $E'$ are isogenous, then the  groups $E(k)$ and $E'(k)$ have the same order, but might not be isomorphic. For example, if $k$ is the field of 5 elements and $E$ and $E'$ are defined by
\begin{align*}
E: y^2 &= x^3+x \\
E': y^2 &= x^3 + x +2,
\end{align*}
then $E(k) \simeq \Z/2\Z \times \Z/2\Z$ and $E'(k) \simeq \Z/4\Z$. However, if $E(k) \simeq E'(k)$, does that imply that $E(K) \simeq E'(K)$, as $K$ ranges over finite extensions of $k$?  It is similarly easy to see that the answer is no:

\begin{exm} \label{exm1}
Let $k$ be the field of 17 elements and $K$ the field of $17^2$ elements.  Let 
\begin{align*}
E: y^2 &= x(x+6)(x+12) \\
E': y^2 &= (x+1)(x+4)(x-4).
\end{align*} 
Observe that $E' = E/\langle (0,0) \rangle$, so $E$ and $E'$ are 2-isogenous.  One can check that
\[
E(k) \simeq E'(k) \simeq \Z/2\Z \times \Z/10\Z,
\]
but $E(K) = \Z/8\Z \times \Z/40\Z$ and $E'(K)= \Z/4\Z \times \Z/80\Z$.
\end{exm}

It is even possible to come up with examples where $E(K) \simeq E'(K)$ for all finite extensions $K/k$, but $E$ and $E'$ are not isomorphic as curves.  However, extreme examples such as these, and routine ones like \ref{exm1},  only happen under very specific circumstances.  In fact, Example \ref{exm1} can be viewed as a ``worst case scenario'' in the sense that, under mild rationality conditions, it is only in the context of rational 2-isogenies and quadratic extensions that we can have $E(k) \simeq E'(k)$ and $E(K) \not \simeq E'(K)$.  (We will explain these rationality assumptions in detail below.)  In this paper we aim to understand  \emph{how often} examples such as \ref{exm1} occur.

\subsection{Reduction to 2-isogenies}  Throughout this paper we consider the simplest case where $E$ and $E'$ are related by an isogeny of prime degree $\ell$, coprime to the characteristic of $k$.  If we additionally assume that $E(k)$ and $E'(k)$ have order divisible by $\ell$ and the kernel of the isogeny $E \to E'$ is generated by a rational point of order $\ell$, then we recall the main theorem of \cite{cullinan1}:

\begin{thm}[Theorem 1 of \cite{cullinan1}] \label{cull1thm}
Let $k$ be a finite field of odd characteristic $p$, $\ell \ne p$ an odd prime, and $E$ and $E'$ ordinary elliptic curves over $k$.  Suppose $E$ and $E'$
\begin{itemize}
\item each have a point of order $\ell$ defined over $k$, and 
\item are $\ell$-isogenous with kernel generated by a point defined over $k$.
\end{itemize}
Then $E(k) \simeq E'(k)$ if and only if $E(K) \simeq E'(K)$ for all finite extensions $K/k$.
\end{thm}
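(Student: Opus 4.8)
The plan is to reduce everything to the behaviour of the first invariant factor under powers of Frobenius. Write $E(\F_{q^n}) \simeq \Z/d_1\Z \times \Z/d_2\Z$ with $d_1 \mid d_2$, and likewise $E'(\F_{q^n}) \simeq \Z/d_1'\Z \times \Z/d_2'\Z$. Because $E$ and $E'$ are isogenous they share the characteristic polynomial of Frobenius, so $\#E(\F_{q^n}) = \#E'(\F_{q^n})$ and hence $d_1 d_2 = d_1' d_2'$ for every $n$; consequently $E(\F_{q^n}) \simeq E'(\F_{q^n})$ if and only if $d_1 = d_1'$. The theorem thus reduces to comparing the two first invariant factors as $n$ varies, and since the backward implication is trivial (take $K = k$), only the forward direction needs work.

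First I would invoke the Lenstra--Waterhouse description in the ordinary case: if $\pi$ is the Frobenius and $\mathcal{O} = \End_k(E)$, $\mathcal{O}' = \End_k(E')$ are the corresponding orders in the imaginary quadratic field $K = \Q(\pi)$, then $E(\F_{q^n}) \simeq \mathcal{O}/(\pi^n - 1)$ and $E'(\F_{q^n}) \simeq \mathcal{O}'/(\pi^n - 1)$ as abelian groups. Computing the Smith normal form of multiplication by $\pi^n - 1$ identifies $d_1$ with the content $\max\{d \in \Z_{>0} : (\pi^n - 1) \in d\,\mathcal{O}\}$, i.e.\ the largest $m$ with $E[m] \subseteq E(\F_{q^n})$. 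Since $\pi$ is the same algebraic integer for both curves, any discrepancy between $d_1$ and $d_1'$ comes only from the two orders, which for a degree-$\ell$ isogeny either coincide or differ by index $\ell$ (their conductors differing by a factor of $\ell$). If $\mathcal{O} = \mathcal{O}'$ the contents agree for all $n$ and the statement is immediate, so I would assume the conductors differ exactly at $\ell$; this also localizes the problem, since $\mathcal{O} \otimes \Z_q = \mathcal{O}' \otimes \Z_q$ for $q \ne \ell$, and hence $d_1$, $d_1'$ can differ only in their $\ell$-parts.

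The heart of the argument is an $\ell$-adic valuation computation. The assumption that each curve carries a rational point of order $\ell$ forces a prime $\mathfrak{l} \mid \ell$ with $\pi \equiv 1 \pmod{\mathfrak{l}}$, and together with the rational-kernel hypothesis pins down the splitting type of $\ell$ in $\mathcal{O}_K$; I would verify this from $\ell \mid N(\pi - 1) = \#E(k)$, the split case being the essential one. Setting $s = \ord_{\mathfrak{l}}(\pi - 1) \ge 1$ and writing $\pi = 1 + u$, the lifting-the-exponent lemma gives
\[
\ord_{\mathfrak{l}}(\pi^n - 1) = s + v_{\ell}(n),
\]
and an analogous formula at the conjugate prime, precisely because $\ell$ is odd. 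This is the decisive point: the increment $v_{\ell}(n)$ by which each relevant valuation of $\pi^n - 1$ grows depends only on $\pi$, $\mathfrak{l}$ and $n$, and not on which of the two orders we work in.

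The main obstacle, and the final step, is to convert these uniform increments into an identity of contents. Writing $c(n) := v_{\ell}(d_1)$ and $c'(n) := v_{\ell}(d_1')$, I would compute the content of $\pi^n - 1$ in the maximal local order as the minimum of the two prime valuations, and in the index-$\ell$ suborder $\Z_\ell + \ell\,\mathcal{O}_{K,\ell}$ as that minimum corrected by a term measuring how close the two local components of $\pi^n - 1$ in $\Z_\ell \times \Z_\ell$ are, namely $\ord_\ell$ of their difference minus one. The delicate part is to show that, under the lifting-the-exponent estimate, this correction shifts $c(n)$ and $c'(n)$ by the same amount for every $n$, so that $c(n) - c'(n)$ is constant; then the hypothesis $c(1) = c'(1)$, i.e.\ $E(k) \simeq E'(k)$, yields $c(n) = c'(n)$ and hence $d_1 = d_1'$ for all $n$. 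I expect the careful bookkeeping of the correction term across the split, and confirming that it never interacts badly with the minimum, to be where the real work lies; it is also exactly here that the argument breaks for $\ell = 2$, where lifting-the-exponent acquires an extra term, consistent with the quadratic case being the genuine exception.
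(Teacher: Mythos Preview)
The paper does not prove this theorem; it is quoted from \cite{cullinan1} as background motivating the restriction to $\ell=2$, so there is no in-paper argument to compare against.

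Your reduction is correct and is in effect the Heuberger--Mazzoli criterion \eqref{heuberger_criterion} that the paper later recalls: Lenstra's isomorphism $E(\F_{q^n})\simeq\mathcal O/(\pi^n-1)$ together with Kohel's result that the two orders differ (if at all) by index $\ell$ makes the question local at $\ell$, and computing the $\ell$-content of $\pi^n-1=(a_n-1)+b_n\omega$ in $\mathcal O_g$ versus $\mathcal O_{g'}$ gives $c(n)=\min\bigl(v_\ell(a_n-1),\,v_\ell(b_n)-v_\ell(g)\bigr)$ and similarly for $g'$, so $d_1=d_1'$ exactly when $v_\ell(a_n-1)\le v_\ell(b_n)-s_\ell$. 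What remains is to show that this inequality at $n=1$ propagates to all $n$.

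That is where your proposal is still a plan rather than a proof. You declare the split case ``the essential one'' but do not argue that the hypotheses force $\ell$ to split in $K$, nor do you treat the inert or ramified cases; even in the split case you have not verified that \emph{both} $v_{\mathfrak l}(\pi-1)$ and $v_{\bar{\mathfrak l}}(\pi-1)$ are positive, which LTE requires. More importantly, the passage from the LTE increments on $\pi^n-1$ at $\mathfrak l$ and $\bar{\mathfrak l}$ back to the pair $\bigl(v_\ell(a_n-1),\,v_\ell(b_n)\bigr)$ is exactly the ``careful bookkeeping of the correction term'' you flag, and it is not automatic: when the two prime valuations of $\pi^n-1$ coincide there can be cancellation in the $\omega$-coefficient, and one must rule out that this ever flips the inequality. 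You correctly locate the crux, but the argument stops precisely at the step that carries the content. The proof in \cite{cullinan1} handles this by a direct analysis of how $v_\ell(a_n-1)$ and $v_\ell(b_n)$ evolve under the recursion $\pi^{n}\mapsto\pi^{n+1}$; your LTE route is plausible and would explain neatly why $\ell=2$ is exceptional, but as written it is not yet complete.
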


Concretely, \emph{when $\ell$ is odd}, if $E$ and $E'$ are $\ell$-isogenous and each has a non-trivial point of order $\ell$ defined over $k$, then $E(k) \simeq E'(k)$ implies $E(K) \simeq E'(K)$ in all finite extensions $K/k$.   

\bigskip

Because it is known how the $\ell$-part of the groups of rational points grow in towers \cite{lauter}, we can take from this theorem that the group structure of $E(k)$ completely determines the group structure of all $\ell$-isogenous curves to $E$, in all finite extensions $K/k$, under the hypothesis that the isogeny is generated by a $k$-rational point of order $\ell$.  (In case the isogeny has degree $\ell$, but the groups $E(k)$ and $E'(k)$ have order coprime to $\ell$, then one must perform an initial base-field extension to (say) $L$ to obtain a point of order $\ell$.  Then we can apply Theorem \ref{cull1thm} taking $L$ as the base field.)

Example \ref{exm1} shows that Theorem \ref{cull1thm} cannot be true when $\ell=2$.  But it also exemplifies the only way for Theorem \ref{cull1thm} to fail.  More precisely, in \cite{cullinan2} the first author proved the following theorem, showing exactly under which circumstances the groups of rational points of 2-isogenous curves fail to be isomorphic in towers:

\begin{thm}[Theorem 2 of \cite{cullinan2}] \label{cull2thm}
Let $E$ and $E'$ be ordinary, 2-isogenous elliptic curves defined over a finite field $k$ such that the isogeny is also defined over $k$. Suppose $E(k) \simeq E'(k)$.  Let the endomorphism ring of each curve be an order in the quadratic imaginary ring $\Z[\omega]$ and write $\pi = a+b\omega \in \Z[\omega]$, where $a$ is odd and $b$ is even, for the Frobenius endomorphism.  
Then $E(K) \simeq E'(K)$ for all finite extensions $K/k$ unless the following holds:
\[
v_2(a-1) = 1 \text{  and  } v_2(a+1) > v_2(b) - s_2.
\]
In that case, $E(K) \simeq E'(K)$ for odd-degree extensions $K/k$ only.
\end{thm}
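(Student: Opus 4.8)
The plan is to convert everything into an explicit computation with the Frobenius $\pi = a + b\omega$ and then isolate the $2$-primary part of the two groups. Since $E$ and $E'$ are ordinary, all of their endomorphisms are defined over $k$, so by the complex-multiplication structure theorem of Lenstra there are module isomorphisms $E(\F_{q^n}) \simeq \mathcal{O}/(\pi^n - 1)\mathcal{O}$ and $E'(\F_{q^n}) \simeq \mathcal{O}'/(\pi^n - 1)\mathcal{O}'$, where $\mathcal{O} = \End(E)$ and $\mathcal{O}' = \End(E')$ are the two orders in $\Z[\omega]$ attached to the curves. As the isogeny has degree $2$, these orders are either equal --- in which case the two quotients are literally identical for every $n$ and there is nothing to prove --- or one contains the other with index $2$; so I may assume $\mathcal{O} = \Z[2^j\omega]$ and $\mathcal{O}' = \Z[2^{j+1}\omega]$ for some $j \ge 0$, the condition $\pi \in \mathcal{O}'$ forcing $2^{j+1} \mid b$. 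Writing $\pi^n - 1 = a_n + b_n\omega$ and putting the multiplication-by-$(\pi^n-1)$ matrices into Smith normal form on the bases $\{1, 2^j\omega\}$ and $\{1, 2^{j+1}\omega\}$, the first invariant factors come out as $\gcd(a_n, b_n/2^j)$ and $\gcd(a_n, b_n/2^{j+1})$. Because the two groups have the same order, they are isomorphic iff these agree, and comparing $2$-adic valuations distills this to the single criterion
\begin{equation*}
E(\F_{q^n}) \simeq E'(\F_{q^n}) \iff v_2(a_n) < v_2(b_n) - j.
\end{equation*}
For $n = 1$ this reads $v_2(a-1) < v_2(b) - j$, which is precisely the standing hypothesis $E(k) \simeq E'(k)$.

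Next I would prove the odd/even dichotomy by factoring $\pi^n - 1$. The hypotheses force $\pi \equiv 1 \pmod{2}$, so for $n = 2^e m$ with $m$ odd one has $\pi^m - 1 = (\pi - 1)U_0$ and, when $e \ge 1$,
\begin{equation*}
\pi^n - 1 = 2^{e-1}(\pi^2 - 1)\,U, \qquad U = U_0 \cdot \frac{\pi^m + 1}{\pi + 1} \cdot \prod_{i=1}^{e-1}\frac{\pi^{2^i m} + 1}{2},
\end{equation*}
where each displayed cofactor is congruent to $1$ modulo $2\Z[\omega]$; here one uses $\pi^2 \equiv 1 \pmod{4}$ to see that the factors $(\pi^{2^i m} + 1)/2$ are integral and $\equiv 1 \pmod{2\Z[\omega]}$. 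The power $2^{e-1}$ shifts $v_2(a_n)$ and $v_2(b_n)$ by the same amount and so cannot change the criterion; the goal is then to show that multiplication by the odd cofactor likewise preserves it, which would give at once that every odd-degree extension has the same status as $k$ (hence is isomorphic) and every even-degree extension has the same status as $\F_{q^2}$. The subtlety --- and the main obstacle --- is that the cofactors are congruent to $1$ only modulo $2\Z[\omega]$, not necessarily modulo $2\,\End(E)$; the discrepancy between these two congruences is exactly what the invariant $s_2$ of \cite{cullinan2} records, and controlling it uniformly across the splitting and ramification behavior of $2$ in $\Z[\omega]$ is the heart of the argument. For $j = 0$ a direct expansion of $(A + B\omega)(1 + 2\delta)$ shows the criterion is preserved for every $\delta \in \Z[\omega]$, and the general case amounts to bootstrapping this computation up the $2$-power index.

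Finally I would settle the quadratic case explicitly. Expanding $\pi^2 - 1 = (\pi-1)(\pi+1)$ and reducing $\omega^2$ through its minimal polynomial $\omega^2 - t\omega + N = 0$ gives
\begin{equation*}
a_2 = (a-1)(a+1) - N b^2, \qquad b_2 = b\,(2a + t b).
\end{equation*}
Since $a$ is odd, exactly one of $a-1$, $a+1$ has $2$-adic valuation $1$, and the standing inequality $v_2(a-1) < v_2(b) - j$ shows that only the branch $v_2(a-1) = 1$ (so $v_2(b) \ge j+2$) can produce a failure of isomorphism. On that branch one finds $v_2(b_2) = v_2(b) + 1$, because $v_2(2a) = 1$ while $v_2(tb) \ge v_2(b) > 1$, whereas $v_2\bigl((a-1)(a+1)\bigr) = 1 + v_2(a+1)$ and $v_2(Nb^2) = v_2(N) + 2v_2(b)$ is strictly larger than $v_2(b) + 1$ and hence never binding. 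Thus the non-isomorphism criterion $v_2(a_2) \ge v_2(b_2) - j$ collapses to $v_2(a+1) \ge v_2(b) - j$, i.e. to $v_2(a+1) > v_2(b) - s_2$ once $s_2$ is normalized to record the $2$-power index through $s_2 = j + 1$. Combined with the dichotomy of the previous paragraph this yields the theorem: isomorphism persists in all extensions unless $v_2(a-1) = 1$ and $v_2(a+1) > v_2(b) - s_2$, in which case it persists exactly in the odd-degree extensions. The delicate valuation bookkeeping around $s_2$ --- both in the dichotomy and in pinning the threshold to $-s_2$ rather than to a naive integer shift --- is where I expect essentially all of the difficulty to concentrate.
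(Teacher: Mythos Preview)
The paper does not contain a proof of this theorem: it is quoted verbatim from \cite{cullinan2} as background (Theorem~\ref{cull2thm} is labeled ``Theorem~2 of \cite{cullinan2}'' and no argument is reproduced here), so there is no in-paper proof to compare your proposal against.

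That said, your outline is broadly on the right track --- Lenstra's structure theorem, the Smith-form reduction to $v_2(a_n)$ versus $v_2(b_n)-j$, and the factoring of $\pi^n-1$ into $(\pi^2-1)$ times odd cofactors are exactly the ingredients one uses --- but you have flagged the genuine gap yourself: the step showing that multiplication by a cofactor $\equiv 1 \pmod{2\Z[\omega]}$ preserves the criterion is only sketched for $j=0$, and the bootstrap to general $j$ (equivalently, the role of $s_2$) is asserted rather than carried out. One smaller point in your quadratic computation: you claim $v_2(Nb^2)$ is ``never binding'' in $a_2 = (a-1)(a+1) - Nb^2$, but a priori $v_2(a+1)$ could be large enough that $v_2(Nb^2) \le 1 + v_2(a+1)$. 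The conclusion you want still holds in that regime --- because then $v_2(a_2) \ge 2v_2(b) \ge v_2(b)+1-j$ automatically, and simultaneously $v_2(a+1) \ge 2v_2(b)-1 > v_2(b)-j$ --- but the argument as written does not cover it.
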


\begin{rmk}
In the published version of \cite[Theorem 2]{cullinan2} there is a typographical error in the statement of the theorem.  There is an extra ``+1'' in the inequality for $v_2(a+1)$.  The corrected statement is listed above.
\end{rmk}

In the theorem, $s_2$ is a positive integer related to the conductors of the endomorphism rings of $E$ and $E'$. The upshot of this result is that there are precisely two possibilities. Either 
\begin{enumerate}
\item $E(K) \simeq E'(K)$ for all finite extensions $K/k$, or 
\item we can detect that $E(K) \not \simeq E'(K)$ in the unique quadratic extension $K/k$. Moreover, we can detect this failure by performing computations \emph{exclusively over $k$}.   
\end{enumerate}
We will review all of this background in detail in later sections of the paper. 

\subsection{Setup and Statement of the Main Results} Granting this background, we now set our notation and aims for the paper.  Let $E$ and $E'$ be 2-isogenous elliptic curves defined over a field $k$ such that the isogeny is also defined over $k$.  We call such a pair $(E,E')$ \textbf{rationally 2-isogenous}.  In this paper we  focus exclusively on the cases $k= \F_p$ and $k = \Q$. 

Fix an odd prime $p$. If $E$ and $E'$ are rationally 2-isogenous over $\F_p$, then $E(\F_p)$ has a point $P$ of order 2 and 
\[
E' = E/\langle P \rangle.
\]
We say that the pair $(E,E')$ is an  \textbf{anomalous pair} if $E$ and $E'$ are rationally 2-isogenous over $\F_p$, $E(\F_p) \simeq E'(\F_p)$, and  $E(\F_{p^2}) \not \simeq E'(\F_{p^2})$. As explained above, this is precisely the obstruction for rationally 2-isogenous curves having isomorphic group structures in towers over $\F_p$.

Here is the point of view we take for the paper.  Fix a pair of rationally 2-isogenous curves $(E,E')$ over $\Q$. We assume henceforth that $E$ and $E'$ do not have CM.  However, we will address the CM case in a forthcoming paper \cite{rnt}; see Section \ref{preview_2} for further details. To streamline notation, we will also use $E$ and $E'$ to denote the reductions modulo $p$ of the curves over $\Q$.   We call a prime $p$ of good reduction \textbf{anomalous} for $(E,E')$ if $E(\F_p) \simeq E'(\F_p)$ and $E(\F_{p^2}) \not \simeq E'(\F_{p^2})$.  Therefore, at an anomalous prime for the pair $(E,E')$ defined over $\Q$, we have that $(E,E')$ is an anomalous pair.  Depending on whether $p$ or $E$ is fixed, the two usages of ``anomalous'' should not be in conflict.   

Given this setup, we seek to understand the ratio
\begin{align} \label{anomlim}
\mathcal{P}(X)  = \frac{\#\lbrace \text{anomalous}~p \leq X \rbrace}{\pi(X)},
\end{align}
where $\pi(X)$ is the prime counting function, and also the limit $\mathcal{P} = \lim_{X \to \infty} \mathcal{P}(X)$, if it exists. We note that $\mathcal{P}(X)$ and $\mathcal{P}$ depend on both $E$ and $E'$ (more specifically, they depend on the images of the 2-adic representations over $\Q$ for each curve).  In this paper we only make one computation explicit: the case where the 2-adic images are isomorphic and as large as possible given the constraints of the setup.  The following examples show that there exist pairs $(E,E')$ for which anomalous primes exist, and there exist pairs for which they do not.  Throughout this paper when we refer to a proportion of primes with some property, or the probability that a prime has some property, we mean it is in this sense of counting primes up to $X$ and taking a limit.

\begin{exm}
Let $E$ be the elliptic curve \href{https://www.lmfdb.org/EllipticCurve/Q/210/e/5}{{\tt 210e5}} and $E'$ the curve \href{https://www.lmfdb.org/EllipticCurve/Q/210/e/4}{{\tt 210e4}} of the \texttt{LMFDB} \cite{lmfdb}.  Then $E$ and $E'$ are 2-isogenous, with $\Q$-torsion subgroups $\Z/2\Z \times \Z/8\Z$ and $\Z/2\Z \times \Z/4\Z$, respectively.  There are no anomalous primes for these curves, a consequence (as we will see below) of the sizes of $E(\Q)_\tors$ and $E'(\Q)_\tors$.   
\end{exm}

\begin{exm}
The isogeny class \href{https://www.lmfdb.org/EllipticCurve/Q/10608/y/}{{\tt 10608y}} consists of two elliptic curves, $E$ and $E'$, such that 
\[
E(\Q)  \simeq E'(\Q) \simeq \Z/2\Z;
\]
these are the smallest Mordell-Weil groups possible given that $E$ and $E'$ are rationally 2-isogenous.  Moreover, the mod 2 representation of each curve has order 2, and the 2-adic representation of each has index 3 in $\GL_2(\Z_2)$, \emph{i.e.},~is as large as possible given the hypotheses on each curve.     We consider all primes up to some bound and count those that are anomalous: 
\begin{center}
\begin{tabular}{|c|c|c|c|c|c|}
\hline
total $\#p$& $\# p \mid E(\F_p) \not \simeq E'(\F_p)$ & $\# p \mid E(\F_p) \simeq E'(\F_p)$ & \# anomalous \\
\hline
1000 &  539 & 457 & 30 \\
\hline
10000  & 5324 & 4672 & 335 \\
\hline
\end{tabular}
\end{center}
\end{exm}
Converting the number of good primes to a value of $X$, we see that 
\begin{align*}
\mathcal{P}(7919) &= \frac{30}{1000} \sim 0.03,\text{ and} \\
\mathcal{P}(104741) &= \frac{335}{10000} \sim 0.0335,
\end{align*}
suggesting that the limit might exist.

The main result of this paper is that the limit does exist and can be computed using the image of the 2-adic representations attached to $E$ and $E'$.  In general, a pair of rationally 2-isogenous elliptic curves define adjacent vertices on an {isogeny-torsion} graph over $\Q$.  In \cite{chil-alvaro} and \cite{chiloyan}, the authors give a classification of all isogeny-torsion graphs over $\Q$.  Moreover, the classification of Rouse and Zureick-Brown \cite{rzb} of the possible images of the 2-adic representation of elliptic curves over $\Q$ presents us with a finite list of graphs and images to consider for $E$ and $E'$.

In a forthcoming paper \cite{rnt} we work out, among other things,  the possible values that can occur for elliptic curves over $\Q$, including the CM case.  In this paper, we consider only one case and prove the following theorem.

\begin{thm} \label{mainthm2}
Let $E$ and $E'$ be rationally 2-isogenous elliptic curves over $\Q$ such that $[\GL_2(\Z_2): \im \rho_{E,2}] = [\GL_2(\Z_2):\im \rho_{E',2}] = 3$, \emph{i.e.},~both curves have maximal 2-adic image given that each has a rational 2-torsion point.  Then $\mathcal{P} = 1/30$.
\end{thm}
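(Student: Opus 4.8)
The plan is to convert the density $\mathcal{P}$ into a Haar-measure computation on the image of the $2$-adic Galois representation and then evaluate that measure. First I would reduce to a single Frobenius matrix. Since $E$ and $E'$ are $2$-isogenous they have the same order over every $\F_{p^n}$, and the isogeny is an isomorphism on prime-to-$2$ torsion; hence $E(\F_{p^n}) \simeq E'(\F_{p^n})$ if and only if their $2$-primary parts coincide, so $\mathcal{P}$ is governed entirely by the $2$-adic representation. Writing $M = \rho_{E,2}(\Frob_p) \in \GL_2(\Z_2)$, one has $E(\F_{p^n})[2^\infty] \cong \coker(M^n - I)$ up to the standard identification, and because the degree-$2$ isogeny identifies the Tate modules through a matrix $J$ conjugate to $\mathrm{diag}(1,2)$, the matrix attached to $E'$ is $M' = JMJ^{-1}$ and $E'(\F_{p^n})[2^\infty] \cong \coker(JM^nJ^{-1} - I)$. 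Thus a single matrix $M$ controls both curves in every extension.

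Next I would pin down $G = \im \rho_{E,2}$. Because each curve has a rational $2$-torsion point, its image fixes a nonzero vector modulo $2$; and because the reduction kernel of $\GL_2(\Z_2) \to \GL_2(\F_2) \cong S_3$ is pro-$2$ while the index is the odd number $3$, every index-$3$ subgroup is the full preimage of an order-$2$ subgroup of $S_3$. Hence the image is forced to be $G = \{M : a,d \text{ odd},\ b \text{ even}\}$ in a basis adapted to the isogeny, independent of the particular curve---this is exactly why $\mathcal{P}$ is universal in this case. By Chebotarev applied at each finite level $\Q(E[2^n])$ and passing to the limit, the matrices $\rho_{E,2}(\Frob_p)$ equidistribute in $G$ for normalized Haar measure; since the anomalous locus is a countable union of clopen sets cut out by $2$-adic valuation inequalities, its boundary is null and $\mathcal{P}$ equals its Haar measure. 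A convenient feature is that on $G$ the determinant is automatically a unit, so Haar measure is just the product of independent uniform measures: $a,d$ uniform on $1 + 2\Z_2$, $b$ uniform on $2\Z_2$, and $c$ uniform on $\Z_2$.

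The computation itself rests on the observation that the isomorphism type of $\coker(N - I)$ for $N \in \GL_2(\Z_2)$ is determined by $v_2(\det(N-I))$ together with the valuation of the gcd of the entries of $N - I$; since $M$ and $M'$ give cokernels of equal order, isomorphism over $\F_{p^n}$ is equivalent to equality of these gcd-valuations. Writing $\mathrm{fed}(N)$ for the latter, a prime is anomalous precisely when $\mathrm{fed}(M) = \mathrm{fed}(JMJ^{-1})$ while $\mathrm{fed}(M^2) \neq \mathrm{fed}(JM^2J^{-1})$. Setting $x = v_2(a-1)$, $y = v_2(d-1)$, $s = v_2(b)$, $t = v_2(c)$, the first condition reads $\min(x,y,s,t) = \min(x,y,s-1,t+1)$, and I would integrate over $G$ by summing over the valuation pattern, using $\Prob(\min(x,y) = j) = 3 \cdot 4^{-j}$ together with companion geometric series; the factor $5$ in $1/30$ arises from a series that sums via $\sum_j 16^{-j} = 16/15$. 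As a check, I would match this against Theorem \ref{cull2thm}, whose criterion $v_2(a+1) > v_2(b) - s_2$ (in the CM coordinates of Frobenius) must agree with the matrix computation once the conductor parameter $s_2$ is averaged over $p$.

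I expect the main obstacle to be the squared condition. Writing $M^2 - I = (M-I)(M+I)$, the diagonal entries become $(a-1)(a+1) + bc$ and $(d-1)(d+1) + bc$, in which the two summands may share the same $2$-adic valuation and cancel; consequently $\mathrm{fed}(M^2)$ is not determined at any fixed level and the case split branches according to these cancellations. Organizing the cancellation cases so that the clopen pieces are disjoint, and summing the resulting multiply-indexed geometric series without double counting, is the real work; this is also the step where the exponent bookkeeping must be reconciled with the inequality of Theorem \ref{cull2thm} in order to produce the exact constant $1/30$.
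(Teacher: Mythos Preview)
Your framework is sound: the anomalous condition depends only on the $2$-adic Frobenius class, and with $M' = JMJ^{-1}$ for $J$ conjugate to $\mathrm{diag}(1,2)$ the whole question becomes a Haar-measure computation on the unique index-$3$ subgroup $G\subset\GL_2(\Z_2)$. Chebotarev at each finite level, together with the fact that the anomalous locus is a countable union of clopen sets, justifies the passage to Haar measure exactly as you say.

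Where your route diverges from the paper is in how the measure is evaluated. The paper does \emph{not} compute $\mathrm{fed}(M^2)$ versus $\mathrm{fed}(JM^2J^{-1})$ directly. Instead it first proves a structural lemma (Lemma~\ref{22lem}, using the volcano description in \cite{miret2}) that for any anomalous prime one must have $E(\F_p)[2^\infty]\simeq E'(\F_p)[2^\infty]\simeq(\Z/2\Z)^2$. Combined with the defect analysis this forces $F\equiv -I\pmod{2^m}$ for a unique $m\ge 2$; writing $F=-I+2^mN$, the anomalous-of-defect-$(m{+}1,m)$ condition reduces (in the basis adapted to $\varphi$) to the single requirement that the off-diagonal entry $n_3$ of $N$ be odd. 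The proportion is then $\tfrac{1}{2}\cdot|G(2^m)|^{-1}$, the geometric series $2\sum_{m\ge 2}|G(2^m)|^{-1}/2$ sums to $1/30$, and the paper obtains the ``$1/2$'' not by a matrix count but by showing that a certain quartic (Lemma~\ref{splitting_lem}) splits for exactly half of the relevant primes, invoking Theorem~\ref{nathan_thm} to rule out the degenerate case.

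Your direct valuation approach with $x,y,s,t$ is a genuinely different organization. It avoids any appeal to volcano theory or explicit division polynomials, which is attractive; but without the reduction to $F\equiv -I\pmod{2^m}$ you are forced to confront the cancellation in $(a-1)(a+1)+bc$ head-on, exactly the obstacle you flag. That obstacle disappears once you notice that the anomalous locus lies entirely inside $\{F\equiv -I\pmod 4\}$: there $\mathrm{fed}(F-I)=1$ automatically, the ``squared'' comparison collapses to whether $n_3$ is odd, and the whole computation becomes a one-line geometric series. So your plan is viable, but the shortcut you are missing is precisely the paper's Lemma~\ref{22lem}; with it, the multiply-indexed case analysis you anticipate is unnecessary.
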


\begin{rmk}
The elliptic curves of Theorem \ref{mainthm2} are parameterized by the curve \href{http://users.wfu.edu/rouseja/2adic/X6.html}{$\mathtt{X_6}$} of the \href{http://users.wfu.edu/rouseja/2adic/}{\texttt{RZB}} database.
\end{rmk}

\begin{rmk}
See Section \ref{preview_2} for a discussion of the non-maximal cases and setup to be addressed in \cite{rnt}.
\end{rmk}

In order to get the result that $\mathcal{P} = 1/30$, we make full use of the structure of the \textbf{2-isogeny volcano} $V_p$ of $E$ at $p$.  The 2-isogeny volcano is a graph, the connected components of which consist of vertices (elliptic curves over $\F_p$) and edges ($\F_p$-rational 2-isogenies), that organizes the curves into levels (we reserve the term \emph{height} for the entire volcano and review our conventions in later sections).  All curves at the same level have isomorphic endomorphism rings, which implies that all curves at the same level have isomorphic group structures over $\F_p$.  

A 2-isogeny $E \to E'$ defined over $\F_p$ can be \textbf{vertical} ($[\End(E):\End(E')] = 2$ or $1/2$) or \textbf{horizontal} ($\End(E) \simeq \End(E')$); horizontal isogenies necessarily preserve the group structure over $\F_p$, while vertical isogenies may or may not.  At an anomalous prime $p$, we have the following confluence of events:
\begin{itemize}
\item the $\Q$-isogeny $E \to E'$ reduces to a vertical isogeny over $\F_p$, and
\item $E(\F_p)[2^\infty] \simeq E'(\F_p)[2^\infty] \simeq \Z/2\Z \times \Z/2\Z$ so that the volcano $V_p$ has the rough structure
\begin{center}
\begin{tikzpicture}[scale=1.0,sizefont/.style={scale = 2}]
\draw[ultra thick] (1,5) node {${\bullet}$};
\draw[ultra thick, dotted] (1,5) -- (1,4);
\draw[ultra thick] (1,4) node {${\bullet}$};
\draw[thick] (1,3) -- (1,4);
\draw[ultra thick] (1,3) node {${\bullet}$};
\draw[ultra thick] (0.5,2) node {${\bullet}$};
\draw[ultra thick] (1.5,2) node {${\bullet}$};
\draw[ultra thick, dotted] (0.5,2) -- (0.5,1);
\draw[ultra thick, dotted] (1.5,2) -- (1.5,1);
\draw[ultra thick] (1.5,1) node {${\bullet}$};
\draw[ultra thick] (0.5,1) node {${\bullet}$};
\draw[thick] (1,3) -- (0.5,2);
\draw[thick] (1,3) -- (1.5,2);

\draw(4,5) node {$\Z/2\Z \times \Z/2\Z$};
\draw(4,4) node {$\Z/2\Z \times \Z/2\Z$};
\draw(4,3) node {$\Z/2\Z \times \Z/2\Z$};
\draw(4,2) node {$\Z/2\Z \times \Z/2\Z$};
\draw[thick, dotted] (4,1.75) -- (4,1.25);
\draw(4,1) node {$\Z/4\Z$};

\draw(-1,3) node {$E$};
\draw(-1,2) node {$E'$};

\end{tikzpicture}
\end{center}
where either $E$ or $E'$ lies at least two levels above the floor, and
\item $E(\F_{p^2})[2^\infty] \not \simeq E'(\F_{p^2})[2^\infty]$ and $E$ and $E'$ are situated on $V_{p^2}$ as follows

\begin{center}
\begin{tikzpicture}[scale=1.0,sizefont/.style={scale = 2}]
\draw[ultra thick] (1,5) node {${\bullet}$};
\draw[ultra thick, dotted] (1,5) -- (1,4);
\draw[ultra thick] (1,4) node {${\bullet}$};
\draw[thick] (1,3) -- (1,4);
\draw[ultra thick] (1,3) node {${\bullet}$};
\draw[ultra thick] (0.5,2) node {${\bullet}$};
\draw[ultra thick] (1.5,2) node {${\bullet}$};
\draw[ultra thick, dotted] (0.5,2) -- (0.5,1);
\draw[ultra thick, dotted] (1.5,2) -- (1.5,1);
\draw[ultra thick] (1.5,1) node {${\bullet}$};
\draw[ultra thick] (0.5,1) node {${\bullet}$};
\draw[thick] (1,3) -- (0.5,2);
\draw[thick] (1,3) -- (1.5,2);

\draw(4,5) node {$\vdots$};
\draw(4,4) node {$\vdots$};
\draw(4,3) node {$\Z/2^{m+1}\Z \times \Z/2^{u-1}\Z$};
\draw(4,2) node {$\Z/2^m\Z \times \Z/2^{u}\Z$};
\draw(4,1) node {$\Z/2^{m+u}\Z$};
\draw[thick, dotted] (4,1.75) -- (4,1.25);
\draw(-1,3) node {$E$};
\draw(-1,2) node {$E'$};

\end{tikzpicture}
\end{center}
\end{itemize}

We interpret the value $\mathcal{P} = 1/30$ as the sum of a geometric series, where the summands reflect the group theory of $\im \rho_{E,2}$ and $\im \rho_{E',2}$.  In particular, we filter the anomalous primes by \textbf{defect} (which we explain in detail in the sections below).  Briefly, an anomalous prime has defect $(a,b)$ if $E(\F_{p^2})$ has full $2^a$-torsion but not full $2^{a+1}$-torsion and $E(\F_{p^2})$ has full $2^b$-torsion, but not full $2^{b+1}$-torsion.  It is a fact about adjacent vertices on an isogeny volcano that a prime can only have defect $(m+1,m)$ or $(m,m+1)$.  (This is exemplified in the figure above.)  Filtering by defect, and weighting each defect by the size of the kernels of the homomorphisms $\im \overline{\rho}_{E,2^{m+1}} \to \im \overline{\rho}_{E,2^{m}}$ and $\im \overline{\rho}_{E',2^{m+1}} \to \im \overline{\rho}_{E',2^{m}}$, we obtain the summands in the geometric series.

To ease the cumbersome notation, we let $G = \im \rho_{E,2}$ and $G' = \im \rho_{E',2}$.  If $p$ is a good prime, let $F \in G$ and $F' \in G'$ denotes representatives of the class of Frobenius.  Note that even though as a quadratic irrational number $\pi = a+b\omega$ (the Frobenius endomorphism) is represented in $\End(E)$ and $\End(E')$ by the same integral expression, the interpretation in each ring is different when those rings are not isomorphic.  Given all of this, we prove the following finer version of Theorem \ref{mainthm2}.

\begin{thm} \label{equithm}      
Let $E$ and $E'$ be rationally 2-isogenous elliptic curves over $\Q$ such that $[\GL_2(\Z_2): G] = [\GL_2(\Z_2):G'] = 3$. Let $p$ be a prime such that $F \equiv -I \pmod{2^{m}}$ but $F\not\equiv-I \pmod{2^{m+1}}$.  Then with probability 1/2, $F' \equiv -I \pmod{2^{m}}$ and $p$ is not anomalous, and with probability 1/2, $F' \equiv -I \pmod{2^{m-1}}$ and $F'\not\equiv -I \pmod{2^m}$ and $p$ is anomalous of defect $(m+1,m)$.  Furthermore, this characterizes all anomalous primes of defect $(m+1,m)$.
\end{thm}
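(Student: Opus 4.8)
The plan is to translate the statement entirely into the action of Frobenius on the $2$-adic Tate modules and to exploit the explicit relation between $F$ and $F'$ coming from the rational isogeny. First I would fix a $\Z_2$-basis $e_1,e_2$ of $T_2(E)$ with $e_1 \bmod 2$ spanning the kernel of the $2$-isogeny $E \to E'$; then the isogeny induces $\phi_\ast = \begin{pmatrix} 2 & 0 \\ 0 & 1\end{pmatrix} =: D$ on Tate modules in compatible bases, and since $\phi$ is defined over $\Q$ it is Galois-equivariant, so $DF = F'D$, i.e. $F' = DFD^{-1}$. Writing $F = \begin{pmatrix} a & b \\ c & d \end{pmatrix}$ (with $c$ even, because $F$ fixes the line $\langle e_1\rangle$ modulo $2$), this gives the clean formula $F' = \begin{pmatrix} a & 2b \\ c/2 & d\end{pmatrix}$. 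The key structural point is that $F'$ has the same trace and diagonal as $F$, while the lower-left entry is halved and the upper-right entry is doubled.

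Next I would record the dictionary between group structure and matrices: $E$ has full $2^k$-torsion over $\F_{p^2}$ iff $F^2 \equiv I \pmod{2^k}$, so the defect of $E$ equals the content $v_2(F^2-I)$, and likewise for $E'$; since $E$ and $E'$ are isogenous the orders $\#E(\F_{p^2})$ and $\#E'(\F_{p^2})$ agree, so $E(\F_{p^2}) \simeq E'(\F_{p^2})$ iff the two contents agree. Using Cayley–Hamilton in the form $F^2 - I = tF - (p+1)I$ and $(F')^2 - I = tF' - (p+1)I$ (same $t = \tr F$ and $p = \det F$), the two matrices share their diagonal and differ only in the off-diagonal scalings. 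Now substitute the hypothesis: writing $F = -I + 2^m M$ with $M \in \mathrm{M}_2(\Z_2)$ and $M \not\equiv 0 \pmod 2$ (this is exactly $F\equiv -I \pmod{2^m}$, $F\not\equiv -I\pmod{2^{m+1}}$), one finds $p \equiv 1 \pmod{2^m}$ and $v_2(t)=1$ (for $m\ge 2$; the case $m=1$ is treated directly), and then $v_2(F^2-I)=m+1$, so the defect of $E$ is $m+1$. Crucially, in $F' + I = \begin{pmatrix} 2^m M_{11} & 2^{m+1}M_{12} \\ 2^{m-1}M_{21} & 2^m M_{22}\end{pmatrix}$ every entry is divisible by $2^{m-1}$, and the coefficient of $2^{m-1}$ is $M_{21}$ in the lower-left slot and $0$ elsewhere modulo $2$. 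Hence $F' \equiv -I \pmod{2^{m-1}}$ always, while $F' \equiv -I \pmod{2^m}$ if and only if $M_{21}$ is even.

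From here the dichotomy falls out by evaluating $(F')^2 - I = tF' - (p+1)I$ entrywise: its lower-left entry is $t\cdot(c/2)$, of valuation $m + v_2(M_{21})$, whereas every other entry has valuation $\ge m+1$. When $M_{21}$ is odd this forces $v_2((F')^2-I)=m$, so $E'$ has defect $m$, the contents disagree, and $p$ is anomalous of defect $(m+1,m)$ with $F'\equiv -I \pmod{2^{m-1}}$ but not mod $2^m$; when $M_{21}$ is even the lower-left entry no longer dominates, the defect of $E'$ returns to $m+1$, and $p$ is not anomalous. The probability statement then reduces to equidistribution of $M_{21} \bmod 2$: since $\rho_{E,2}$ surjects onto $G$ and $G \bmod 2^{m+1}$ is the full preimage of the order-$2$ subgroup of $\GL_2(\F_2)$, the coset $\{-I + 2^m M\}$ lies entirely in $G$, and Chebotarev equidistribution makes each of its sixteen residues equally likely, so $M_{21}$ is even or odd each with probability $1/2$. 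For the converse assertion that this characterizes all defect-$(m+1,m)$ anomalous primes, I would use that the index-$3$ hypothesis together with the isogeny forces $E(\F_p)[2^\infty] \simeq \Z/2\Z \times \Z/2\Z$ at the relevant primes, so $F = I + 2U$ with $U \in \GL_2(\Z_2)$ and defect $m+1$ is equivalent to $U \equiv -I \pmod{2^{m-1}}$, i.e. to $F \equiv -I \pmod{2^m}$ but not mod $2^{m+1}$; a defect-$(m+1,m)$ anomalous prime has defect $m$ for $E'$, which by the entry computation is precisely the case $M_{21}$ odd.

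The step I expect to be the main obstacle is pinning the probability down to \emph{exactly} $1/2$: conditioning literally on ``$F\equiv -I\pmod{2^m}$ but $F\not\equiv -I\pmod{2^{m+1}}$'' deletes the single residue $M=0$ from the coset and skews the naive count, so the clean $1/2$ must be read off the full sixteen-element coset — equivalently, from the reduction $G\bmod 2^{m+1}\to G\bmod 2^m$ and the order of its kernel — with the residue $M=0$ reassigned to the level-$(m+1)$ analysis. Making this bookkeeping precise, and confirming that the lower-order sub-case in which $F$ deviates from $-I$ only in the upper-right entry (which yields defect $(m+1,m+2)$ rather than $(m+1,m)$, and so lands in the complementary family) does not contaminate the defect-$(m+1,m)$ count, is where I would spend the most care.
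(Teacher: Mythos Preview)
Your approach is correct and takes a genuinely different route from the paper. The paper never writes down the explicit conjugation $F' = DFD^{-1}$ with $D = \left(\begin{smallmatrix} 2 & 0 \\ 0 & 1\end{smallmatrix}\right)$; instead it (i) proves $F' \equiv -I \pmod{2^{m-1}}$ by a separate argument (Proposition~\ref{F'form}), (ii) shows $F' \equiv \left(\begin{smallmatrix} -1 & * \\ 0 & -1\end{smallmatrix}\right) \pmod{2^m}$ by tracking the image of the basis vector $Q_{2^m}$ through the isogeny, (iii) translates the vanishing of $*$ into the splitting over $\F_p$ of an explicit quartic coming from the duplication map on $E'$, and (iv) argues that this quartic factors as a product of two quadratics which must be irreducible for a positive proportion of primes---this last step requiring an independent existence result (Theorem~\ref{nathan_thm}) that produces anomalous primes of each defect by analyzing the height and crater shape of the $2$-isogeny volcano. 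Your route replaces all of (ii)--(iv) by the single observation that the lower-left entry of $F'+I$ is $2^{m-1}M_{21}$, so the dichotomy is literally the parity of $M_{21}$, and the $1/2$ then drops out of Chebotarev applied to the full sixteen-element fibre over $-I$ in $G(2^{m+1})$. This is cleaner and more elementary: it avoids both the explicit quartic and the volcano-theoretic existence argument, and it makes transparent why the index-$3$ hypothesis is exactly what is needed (the full fibre lies in $G$). What the paper's route buys is a direct link to the geometry of the volcano $V_p$---in particular, the existence step pins down $\disc\mathcal{O}_0 \pmod 8$ and $h(V_p)$ for a specific Frobenius class, which feeds into the later distributional conjectures in Section~6. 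Your final paragraph correctly flags the one delicate point: the literal conditioning in the theorem statement (on $M \not\equiv 0$) gives $8/15$ rather than $1/2$, and the clean $1/2$ belongs to the conditioning on $F \equiv -I \pmod{2^m}$ alone, exactly as in the paper's Theorem~\ref{m_proportion}; your remark that the ``only $M_{12}$ odd'' residue lands in the $(m+1,m+2)$ family (or, when $v_2(M_{21})=1$, in the non-anomalous case) is also correct and shows the theorem's phrase ``$p$ is not anomalous'' should really be read as ``not anomalous of defect $(m+1,m)$''.
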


\begin{rmk}
A similar result holds for primes of defect $(m,m+1)$.
\end{rmk}

This brings us to the final portion of the paper where we re-interpret our results on anomalous primes and their defects in terms of a probabilistic model of the distribution of heights of volcanoes and the discriminants of the endomorphism rings at each level.

\subsection{Organization of the Paper}
In the next section we review background on elliptic curves over finite fields, in particular the relationship between the endomorphism ring and rational points.  We also recall the relevant history of this problem as well as the results in \cite{cullinan1} and \cite{cullinan2} that are applicable in this context.

As a rough guide to the results, the main point of Section \ref{general} is to determine the structure of the 2-Sylow subgroup of $E(\F_p)$ and $E'(\F_p)$ at anomalous primes.  This leads to the notion of the defect of an anomalous prime.

In Section \ref{Q} we prove Theorem \ref{mainthm2} by filtering the anomalous primes by defect, determining the exact proportion of each defect, and then summing over all defects.  We determine the exact proportion of each defect by re-interpreting the criteria of Section \ref{general} for a prime to be anomalous in terms of matrix conditions in the 2-adic representations attached to $E$ and $E'$.  Following this,  we interpret the defect of an anomalous prime as determining where on the isogeny volcano of the pair $(E,E')$ lies and give numerical data suggesting a finer relationship between anomalous primes and endomorphism rings.

Section \ref{preview_2} is dedicated to future work.  In particular, we contextualize the results of the present paper within the goals of a follow-up paper in which we explore the range of  values of $\mathcal{P}$ that can occur for elliptic curves over $\Q$, including CM curves.  

\subsection{Databases} We use two online databases in this work: the $L$-Functions and Modular Forms Database (\href{https://www.lmfdb.org/}{\texttt{LMFDB}}), and the classification of 2-adic images of Galois representations attached to elliptic curves over $\Q$, due to Rouse and Zureick-Brown (\href{http://users.wfu.edu/rouseja/2adic/}{\texttt{RZB}}), based on the paper \cite{rzb}.  Whenever we use an entry in the database, such as an isogeny class or elliptic curve in the \texttt{LMFDB}, or a modular curve in the \texttt{RZB} database, we link to that entry in the database.

\subsection{Notation} We will explain any specialized notation in main body of the paper, but we remind the reader of some standard conventions.  If $k$ is a field and $k^s$ a separable closure of $k$, then we write $\Gal_k$ for the Galois group of $k^s/k$.  If $E$ is an elliptic curve over $k$ and $\ell$ is a prime number, then we write $T_\ell E$ for the $\ell$-adic Tate module of $E$ and 
\begin{align*}
&\rho_{E,\ell}: \Gal_k \to \Aut (T_\ell E), \text{ and} \\
&\overline{\rho}_{E,\ell^n}: \Gal_k \to \Aut (T_\ell E \otimes \Z/\ell^n\Z) 
\end{align*}
for the $\ell$-adic and mod $\ell^n$ representations of $E$, respectively.  If $G \subseteq \GL_2(\Zl)$ is the image of the $\ell$-adic representation, then we write $G(\ell^n) \subseteq \GL_2(\Z/\ell^n\Z)$ for its reduction modulo $\ell^n$. 

If $R$ is a ring, then we write $M_n(R)$ for the ring of $n\times n$ matrices with entries in $R$.  Finally, if $p$ is a prime number, then we write $v_p: \Q^\times \to \Z$ for the $p$-adic valuation. 

\subsection{Acknowledgments} We would like to thank Andrew Sutherland for supplying us with the initial computations that suggested the correct value of $\mathcal{P}$.  The second author was supported by NSF Grants DMS 1802281 and DMS 2154223.

\section{Elliptic Curves over Finite Fields} \label{background}

\subsection{Endomorphism Rings and Rational Points} \label{first_background} Let $q$ be a power of an odd prime $p$ and let $E$ be an ordinary elliptic curve defined over $\Fq$; we will address supersingular curves in Section \ref{supsect}. Since $E$ is ordinary, its endomorphism ring $\End(E)$ is isomorphic to an order $\mathcal{O}$ in an imaginary quadratic field $K = \Q(\sqrt{D})$ for a squarefree negative integer $D$, and all endomorphisms of $E$ are defined over $\F_q$.  

Let $\OK$ denote the ring of integers of $K$.  Write $d_K$ for the discriminant of $\OK$, the maximal order of $K$.  Then 
\[
d_K = \begin{cases}
4D & \text{ if } D \equiv 2,3\pmod{4} \\
D & \text{ if } D \equiv 1 \pmod{4}.
\end{cases}
\]
Recall that if $g$ is a positive integer, then we denote by $\mathcal{O}_g:=\Z \oplus \Z g\omega$  the order of conductor $g$ in $\OK$, where 
\[
\omega = \begin{cases}
 (1+\sqrt{D})/2 & \text{if } D \equiv 2,3\pmod{4}\\
 \sqrt{D} & \text{if } D \equiv 1 \pmod{4}.
 \end{cases}
\] 
We may therefore write $\Z[\pi] = \mathcal{O}_f$ and $\OK = \mathcal{O}_1$.  Since $\End(E) = \mathcal{O}$ contains $\Z[\pi]$, we may write $\mathcal{O} = \mathcal{O}_g$ for some $g \mid f$ with 
\[
\mathcal{O}_f \subseteq \mathcal{O}_g \subseteq \mathcal{O}_1.
\]
If $\Delta_g$ denotes the discriminant of $\mathcal{O}_g$, then $\Delta_g = g^2d_K$. 

Identifying $\End(E)$ with an order in $\OK$, we may write the Frobenius endomorphism $\pi \in \End(E)$ explicitly as an element of $\OK$.  We now review how to do this.  Recall the well-known formulas relating the cardinality of $E(\Fq)$, the fundamental discriminant of $K$, and the trace $t$ of $\pi$:
\begin{align}
\#E(\Fq) &= 1 + q - t \\
4q &= t^2 - \beta^2\Delta_g,
\end{align}
where $t$ is the trace of Frobenius, $\beta$ is a positive integer, and $\Delta_g = g^2d_K$, as above.

Then $\pi$ has a unique integral representation $\pi = a+b\omega \in \Z[\omega]$ given by 
\begin{align*}
a &= \begin{cases} t/2 & \text{ if } D\equiv 2,3\pmod{4} \\ (t-\beta g)/2 & \text{ if }D \equiv 1\pmod{4}, \end{cases} \\
b&= \beta g.
\end{align*}
We also recall a fundamental result of Lenstra \cite{lenstra}, which gives the structure of $E(\F_{q^m})$ for all positive integers $m$:
\begin{equation}\label{eq_lenstra}
E(\F_{q^m}) \simeq \frac{\mathcal{O}}{(\pi^m - 1)}.
\end{equation}

\subsection{Isogenies} Keeping with the notation above, suppose that $E$ and $E'$ are isogenous (ordinary) elliptic curves defined over $\Fq$.  Then the groups $E(\F_q)$ and $E'(\F_q)$ have the same cardinality, as do the groups $E(\F_{q^m})$ and $E'(\F_{q^m})$, for all positive integers $m$. 

Let $\ell \ne p$ be a prime number.  If $E$ and $E'$ have endomorphism rings $\mathcal{O}$ and $\mathcal{O}'$, respectively, and are $\ell$-isogenous, then by a result of Kohel \cite[Prop.~21]{kohel} we have 
\[
[\mathcal{O}:\mathcal{O}'] = \ell, \ell^{-1}, \text{ or } 1.
\]
In the first two cases, the isogeny is called \emph{vertical} (ascending/descending, depending on the inclusion) and in the latter it is \emph{horizontal}.

Isogenous elliptic curves have the same trace of Frobenius.  In the case of a vertical isogeny, $\mathcal{O}$ and $\mathcal{O}'$ are orders in $\OK$ of relative index $\ell$.  We explain what happens when $\mathcal{O}' \subseteq \mathcal{O}$.  (There is a completely analogous setup when $\mathcal{O} \subseteq \mathcal{O}'$.)  There exist divisors $g$ and $g'$ of $f$ such that $g'/g = \ell$ and $\mathcal{O} = \mathcal{O}_g$, $\mathcal{O}' = \mathcal{O}_{g'}$ with  
$$
\Z[\pi] = \mathcal{O}_f \subseteq \mathcal{O}_{g'} \subseteq \mathcal{O}_{g} \subseteq \mathcal{O}_1 = \OK.
$$

Turning to the group structures of isogenous curves, we recall that the main results of \cite{heuberger} and \cite{wittmann}  give criteria for any pair of  isogenous elliptic curves to have isomorphic groups of $\F_{q^{m}}$-rational points in terms of the prime divisors of the integral components of $\pi^m$.  We now recall some of the special notation introduced in \cite{heuberger} that we will adopt throughout the rest of this paper. 

Define a finite set of prime numbers $\mathbf{P}$ as follows, incorporating the notation above:
\[
\mathbf{P} = \lbrace p\text{ prime} \mid v_p(g) \ne v_p(g') \rbrace.
\]
For each $p \in \mathbf{P}$ we set 
\[
s_p  = \max \lbrace v_p(g), v_p(g') \rbrace,
\]
whence $s_p \geq 1$.   With this notation in place, write 
\[
\pi^m = a_m + b_m\omega,
\] 
for integers $a_m,b_m$.  Finally, we recall the criterion of \cite[Thm.~2.4]{heuberger} for $E(\F_{q^m})$ and $E'(\F_{q^m})$ to be isomorphic:
\begin{align} \label{heuberger_criterion}
E(\F_{q^m}) \simeq E'(\F_{q^m}) \Longleftrightarrow v_p(a_m-1) \leq v_p(b_m) - s_p,
\end{align}
for all $p \in \mathbf{P}$. 

Now we specialize to the situation that is the primary focus of this paper.  When the degree of the vertical isogeny $E \to  E'$ is a prime number $\ell$, then $g'/g = \ell^{\pm 1}$ and so $\mathbf{P} = \lbrace \ell \rbrace$.  For descending isogenies we have $v_\ell(g') = 1 + v_\ell(g)$ and for ascending isogenies we have $v_\ell(g) = 1+ v_\ell(g')$.   Specializing further, we set $\ell=2$ for the remainder of the paper.  In \cite[Thm.~2]{cullinan2} the first author proved that if $E(\Fq) \simeq E'(\Fq)$ and $E(\F_{q^2}) \simeq E'(\F_{q^2})$, then $E(\F_{q^m}) \simeq E(\F_{q^m})$ for all positive integers $m$.  Theorem \ref{cull2thm} gives the precise conditions under which the second isomorphism fails, given the first.  

\subsection{Supersingular Curves} \label{supsect}

In the case where $E$ and $E'$ are supersingular curves over $\F_p$ the situation is (perhaps surprisingly)  much simpler.  We recall the following result of Wittmann.

\begin{thm}[Theorem 4.1 of \cite{wittmann}] \label{wittmann1}
Let $E/\F_p$ be a supersingular elliptic curve.  Then 
$$
E(\F_{p^{2k}}) \simeq \Z/((-p)^k-1)\Z \times \Z/((-p)^k-1)\Z. 
$$
Further:
\begin{itemize}
\item If $p \not \equiv 3 \pmod{4}$ or $p \equiv 3 \pmod{4}$ and $E[2] \not \subseteq E(\F_p)$ we have 
$$
E(\F_{p^{2k+1}}) \simeq \Z/(p^{2k+1} + 1)\Z \text{ and } \End_{\F_p}(E) \simeq \Z[\sqrt{-p}].
$$
\item If $p \equiv 3 \pmod{4}$ and $E[2] \subseteq E(\F_p)$ we have
$$
E(\F_{p^{2k+1}}) \simeq \Z/2\Z \times \Z/\left( \frac{p^{2k+1}+1}{2} \right)\Z \text{ and } \End_{\F_p}(E) \simeq \Z[(1+\sqrt{-p})/2].
$$
\end{itemize}
\end{thm}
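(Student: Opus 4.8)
The plan is to pin down the Frobenius endomorphism $\pi$ explicitly and then read every group structure off Lenstra's formula \eqref{eq_lenstra}, namely $E(\F_{p^m}) \simeq \mathcal{O}/(\pi^m - 1)$, where $\mathcal{O} = \End_{\F_p}(E)$. First I would record that a supersingular $E/\F_p$ over the prime field has trace of Frobenius $t = 0$: since $p \mid t$ and $t^2 \le 4p$ (Hasse), the only possibility for $p \ge 5$ is $t = 0$, and the small primes are checked directly. Hence $\pi$ satisfies $\pi^2 = -p$, so we may identify $\pi$ with $\sqrt{-p}$ inside $\Q(\sqrt{-p})$, and $\mathcal{O}$ is an order containing $\Z[\pi] = \Z[\sqrt{-p}]$. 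When $p \not\equiv 3 \pmod{4}$ the ring $\Z[\sqrt{-p}]$ is already the maximal order, so $\mathcal{O} = \Z[\sqrt{-p}]$; when $p \equiv 3 \pmod{4}$ the maximal order is $\Z[(1+\sqrt{-p})/2]$ and $\Z[\sqrt{-p}]$ has index $2$ in it, so a priori $\mathcal{O}$ is one of these two orders.

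For the even extensions the computation is immediate. Writing $m = 2k$ gives $\pi^{2k} = (\pi^2)^k = (-p)^k \in \Z$, so $\pi^{2k} - 1$ is a rational integer and the ideal it generates is $((-p)^k - 1)\mathcal{O}$. Since $\mathcal{O}$ is free of rank two over $\Z$, the quotient is $(\Z/((-p)^k-1)\Z)^2$, independent of which order $\mathcal{O}$ is, which is exactly the claimed $E(\F_{p^{2k}}) \simeq \Z/((-p)^k-1)\Z \times \Z/((-p)^k-1)\Z$.

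The odd extensions carry the real content. For $m = 2k+1$ we have $\pi^{m} = (-p)^k\sqrt{-p}$; set $c = (-p)^k$, an odd integer, so $\alpha = \pi^m - 1 = -1 + c\sqrt{-p}$. I would compute $\mathcal{O}/(\alpha)$ by expressing multiplication-by-$\alpha$ as a $2\times 2$ integer matrix $M$ in a $\Z$-basis of $\mathcal{O}$ and reading its elementary divisors (Smith normal form): the two invariant factors are $d_1 = \gcd(\text{entries of } M)$ and $d_2 = \det(M)/d_1$, with $\det M = 1 + c^2 p = p^{2k+1}+1$ in both cases. In the basis $\{1, \sqrt{-p}\}$ for $\mathcal{O} = \Z[\sqrt{-p}]$ one entry of $M$ is $-1$, forcing $d_1 = 1$ and hence the cyclic group $\Z/(p^{2k+1}+1)\Z$. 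In the basis $\{1, \omega\}$ with $\omega = (1+\sqrt{-p})/2$ for the maximal order, using $\sqrt{-p} = 2\omega - 1$ and $\omega^2 = \omega - (1+p)/4$, every entry of $M$ turns out even (here one uses $p \equiv 3 \pmod{4}$, so $(1+p)/2$ is even and $c \pm 1$ are even), while $\gcd(c+1, c-1) = 2$; thus $d_1 = 2$ and the group is $\Z/2\Z \times \Z/((p^{2k+1}+1)/2)\Z$.

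It remains to match the two orders to the stated hypothesis on $E[2]$, which I would do by running the same matrix computation at $m = 1$ (i.e. $c = 1$). For $\mathcal{O} = \Z[\sqrt{-p}]$ this yields $E(\F_p)$ cyclic of order $p+1$, so $E[2] \not\subseteq E(\F_p)$; for $\mathcal{O} = \Z[(1+\sqrt{-p})/2]$ it yields $E(\F_p) \simeq \Z/2\Z \times \Z/((p+1)/2)\Z$, and since $p \equiv 3 \pmod{4}$ makes $(p+1)/2$ even this group contains the full $2$-torsion, so $E[2] \subseteq E(\F_p)$. Combined with the observation that $p \not\equiv 3 \pmod{4}$ forces $\mathcal{O} = \Z[\sqrt{-p}]$ and $\#E(\F_p) = p+1 \equiv 2 \pmod{4}$ (again giving $E[2]\not\subseteq E(\F_p)$), this would establish the exact correspondence between the endomorphism ring, the $2$-torsion condition, and the odd-extension group structure. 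The main obstacle is the bookkeeping in the Smith-normal-form step for the maximal order: one must verify simultaneously that all four entries of $M$ are even and that their $\gcd$ is exactly $2$ (not $4$ or more), which is where the congruence $p \equiv 3 \pmod{4}$ and the oddness of $c$ are essential.
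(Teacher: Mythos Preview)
The paper does not prove this statement: it is quoted verbatim as Theorem 4.1 of Wittmann \cite{wittmann} and used as a black box, so there is no in-paper argument to compare against. Your proposal, however, is a correct and self-contained proof. The identification $\pi=\sqrt{-p}$ from $t=0$, the application of Lenstra's isomorphism $E(\F_{p^m})\simeq \mathcal{O}/(\pi^m-1)$ (valid here since $\pi\notin\Z$, even though the paper only states \eqref{eq_lenstra} in the ordinary context), and the Smith-normal-form computations in the two possible orders are all sound; in particular your check that all four entries of the matrix for $\mathcal{O}=\Z[(1+\sqrt{-p})/2]$ are even with $\gcd$ exactly $2$ is exactly the delicate point and you have it right. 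The $m=1$ specialization to pin down which order corresponds to which $E[2]$-hypothesis is the clean way to finish. The only caveat is that your reduction to $t=0$ via Hasse is airtight only for $p\ge 5$; for $p\in\{2,3\}$ supersingular curves over $\F_p$ with nonzero trace exist, so ``checked directly'' is doing real work there and should be made explicit if you want the full statement.
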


In \cite{cullinan2} we observed that this immediately implies that when when $E$ and $E'$ are supersingular, then the group structure over $\F_p$ completely determines the group structure over any finite extension:

\begin{cor}[Corollary 1 of \cite{cullinan2}] \label{cor1}
Let $p$ be a prime.  Let $E_1$ and $E_2$ be supersingular, isogenous elliptic curves defined over $\F_p$.  Suppose $E_1(\F_p) \simeq E_2(\F_p)$.  Then $E_1(K) \simeq E_2(K)$ for every finite extension $K/\F_p$.
\end{cor}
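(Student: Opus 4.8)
The plan is to reduce everything to Theorem \ref{wittmann1} and to exploit the fact that, for a supersingular curve, the isomorphism type of $E(\F_{p^m})$ is controlled by only a handful of invariants, almost all of which are independent of the particular curve. First I would observe that every finite extension $K/\F_p$ is equal to $\F_{p^m}$ for a unique integer $m \geq 1$, so it suffices to show $E_1(\F_{p^m}) \simeq E_2(\F_{p^m})$ for every $m$. I would then apply Theorem \ref{wittmann1} to each of $E_1$ and $E_2$ and compare the resulting group structures according to the parity of $m$.

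For even $m = 2k$, Theorem \ref{wittmann1} gives $E_i(\F_{p^{2k}}) \simeq \Z/((-p)^k - 1)\Z \times \Z/((-p)^k - 1)\Z$ for both $i = 1, 2$, a structure that depends only on $p$ and $k$ and not on the chosen supersingular curve. Hence $E_1(\F_{p^{2k}}) \simeq E_2(\F_{p^{2k}})$ for free, with no use of the hypothesis $E_1(\F_p) \simeq E_2(\F_p)$. Similarly, for odd $m = 2k+1$ in the case $p \not\equiv 3 \pmod 4$, the theorem puts both curves in the first bullet, so $E_i(\F_{p^{2k+1}}) \simeq \Z/(p^{2k+1} + 1)\Z$ for each $i$, and again the two groups agree automatically. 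In both of these situations the curve-dependent data never enters.

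The only place where the two curves can differ, and therefore the one step where the hypothesis is genuinely needed, is the odd-degree case when $p \equiv 3 \pmod 4$: here Theorem \ref{wittmann1} distinguishes the cyclic structure $\Z/(p^{2k+1}+1)\Z$ from $\Z/2\Z \times \Z/\bigl((p^{2k+1}+1)/2\bigr)\Z$ according to whether $E_i[2] \not\subseteq E_i(\F_p)$ or $E_i[2] \subseteq E_i(\F_p)$. The key observation I would make is that the condition $E[2] \subseteq E(\F_p)$ is equivalent to $E(\F_p)$ having full rational $2$-torsion, that is, to $E(\F_p)[2] \simeq \Z/2\Z \times \Z/2\Z$; this is manifestly an isomorphism invariant of the abstract group $E(\F_p)$ (it is simply the statement that its $2$-rank equals $2$). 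Consequently the assumption $E_1(\F_p) \simeq E_2(\F_p)$ forces $E_1[2] \subseteq E_1(\F_p)$ if and only if $E_2[2] \subseteq E_2(\F_p)$, so $E_1$ and $E_2$ fall into the same branch of the theorem and their odd-degree structures coincide as well. Assembling the even case, the two odd cases, and this observation, I would conclude $E_1(\F_{p^m}) \simeq E_2(\F_{p^m})$ for all $m$, hence $E_1(K) \simeq E_2(K)$ for every finite $K/\F_p$. The main (indeed only) obstacle is this last identification: recognizing that the sole curve-dependent dichotomy in Wittmann's theorem is detected precisely by the rational $2$-torsion, which is already pinned down by the hypothesis on $E(\F_p)$.
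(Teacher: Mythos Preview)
Your proposal is correct and follows essentially the same approach as the paper: the paper simply states that the corollary is an immediate consequence of Theorem~\ref{wittmann1}, and you have spelled out exactly why, namely that the only curve-dependent datum in Wittmann's classification is whether $E[2]\subseteq E(\F_p)$, which is visible in the isomorphism type of $E(\F_p)$. One minor remark: you never invoke the isogeny hypothesis, and indeed it is not needed for the argument (over $\F_p$ any two supersingular curves are isogenous in any case, so the hypothesis is redundant).
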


\section{General Properties of Anomalous Primes and Curves} \label{general}

We retain the notation and setup of the previous sections, in particular we assume $E$ and $E'$ are ordinary.  We start with a general property of anomalous pairs.

\begin{prop}
Let $(E,E')$ be an anomalous pair of elliptic curves defined over the finite field $\F_p$.  Then $p \equiv 1 \pmod{4}$.
\end{prop}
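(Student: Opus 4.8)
The plan is to deduce the congruence $p \equiv 1 \pmod 4$ from the norm relation $p = N(\pi)$, where $\pi = a + b\omega \in \OK$ is the Frobenius endomorphism, by first pinning down that $a$ is odd and that $b$ is divisible by $4$. Everything will hinge on two inputs: that an anomalous pair necessarily arises from a \emph{vertical} isogeny, and that the hypothesis $E(\F_p) \simeq E'(\F_p)$ forces $b$ to be highly $2$-divisible through the criterion \eqref{heuberger_criterion}.

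First I would argue that the reduced isogeny $E \to E'$ is vertical. If it were horizontal, then $\End(E) \simeq \End(E')$, and since isogenous curves share the same Frobenius $\pi \in \OK$, Lenstra's formula \eqref{eq_lenstra} would give $E(\F_{p^m}) \simeq \End(E)/(\pi^m - 1) \simeq E'(\F_{p^m})$ for \emph{every} $m$, contradicting $E(\F_{p^2}) \not\simeq E'(\F_{p^2})$. Hence the isogeny is vertical, the two endomorphism rings are orders of relative index $2$, and therefore $\mathbf{P} = \{2\}$ with $s_2 \geq 1$. Because the pair is anomalous it also satisfies the hypotheses of Theorem \ref{cull2thm}, so I may take $\pi = a + b\omega$ with $a$ odd and $b$ even; in particular $v_2(a-1) \geq 1$.

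The key step is to feed $E(\F_p) \simeq E'(\F_p)$ into the criterion \eqref{heuberger_criterion} at $m = 1$, where $a_1 = a$ and $b_1 = b$. Since $\mathbf{P} = \{2\}$ this reads $v_2(a - 1) \leq v_2(b) - s_2$, and combining $v_2(a-1) \geq 1$ with $s_2 \geq 1$ gives $v_2(b) \geq v_2(a-1) + s_2 \geq 2$, i.e. $4 \mid b$. To finish, I reduce the norm relation modulo $4$: because $4 \mid b$ we have $b\omega \in 4\OK$, so $\pi \equiv a \pmod{4\OK}$ and hence $p = \pi\overline{\pi} \equiv a^2 \pmod{4\OK}$; as $p$ and $a^2$ are rational integers and $4\OK \cap \Z = 4\Z$, this descends to $p \equiv a^2 \pmod 4$, and $a$ odd yields $p \equiv 1 \pmod 4$.

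I do not expect a serious obstacle, only two bookkeeping points that must be handled with care. The first is that $\pi$, $a$, $b$, and $\omega$ live in the maximal order $\OK$ where Theorem \ref{cull2thm} places them, even though $\End(E)$ may be a proper suborder; this is what makes the reduction $\pi \equiv a \pmod{4\OK}$ legitimate. The second, and the real content of the argument, is that one genuinely needs the strengthening from ``$b$ even'' to ``$4 \mid b$'': if one instead split on $D \bmod 4$ and used $4p = t^2 - b^2 d_K$ directly, the case $D \equiv 1 \pmod 4$ with $v_2(b) = 1$ would spuriously produce $p \equiv 3 \pmod 4$, and it is precisely the inequality $v_2(b) \geq 1 + s_2 \geq 2$ coming from $E(\F_p) \simeq E'(\F_p)$ that rules this out.
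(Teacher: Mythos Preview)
Your proof is correct and takes a genuinely different route from the paper's. The paper argues by contradiction: assuming $p \equiv 3 \pmod 4$, it splits into the cases $|E(\F_p)| \equiv 0$ versus $2 \pmod 4$, and within each case works with the explicit relation $4p = t^2 - b^2 d_K$ (further splitting on $D \bmod 4$) to force either $s_2 = 0$ or an impossible congruence. Your argument is direct: you extract $s_2 \geq 1$ from verticality, combine it with the Heuberger--Mazzoli criterion at $m=1$ and the parity $a$ odd (both inputs the paper also uses, via Theorem~\ref{cull2thm}) to get $4 \mid b$, and then read off $p = N(\pi) \equiv a^2 \equiv 1 \pmod{4\OK \cap \Z = 4\Z}$.

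What your approach buys is that it avoids all case analysis on $|E(\F_p)| \bmod 4$ and on $D \bmod 4$; the single norm computation in $\OK$ replaces the paper's four subcases. What the paper's approach buys is that it stays at the level of the rational-integer relation $4p = t^2 - b^2 d_K$ and never needs to invoke $\OK$-congruences or the intersection $4\OK \cap \Z$, which some readers may find more concrete. Your final paragraph correctly identifies the one substantive step---upgrading ``$b$ even'' to ``$4 \mid b$'' via $v_2(b) \geq v_2(a-1) + s_2 \geq 2$---which is exactly the same inequality the paper exploits (in disguised form) in its $|E(\F_p)| \equiv 0 \pmod 4$ case.
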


\begin{proof} 
Suppose $p \equiv 3 \pmod{4}$.  We distinguish between the cases where $|E(\F_p)| \equiv 2\pmod{4}$ versus $|E(\F_p)| \equiv 0 \pmod{4}$. Recall that if $(E,E')$ is an anomalous pair then in the representation $\pi = a+b\omega$ of Frobenius as an element of $\OK$, we have that $b$ is even; write $b=2b'$.

If $|E(\F_{p})| \equiv 0 \pmod{4}$, then $t \equiv 0 \pmod{4}$; write $t = 4t'$.  Since 
\[
4p = t^2 - b^2d_K = 16(t')^2 - 4(b')^2d_K,
\]
we must have $p = 4(t')^2 - (b')^2d_K$.  Thus $b'$ and $d_K$ are odd.  In particular, $v_2(b) =1$.  But since $(E,E')$ is an anomalous pair, we have 
\[
v_2(a-1) = 1 \leq v_2(b) - s_2 = 1 - s_2,
\]
whence $s_2 =0$.  But this means $\End(E) \simeq \End(E')$, contradicting the fact that $(E,E')$ are an anomalous pair.

If $|E(\F_p)| \equiv 2 \pmod{4}$, then $t \equiv 2 \pmod{4}$, so write $t = 2t'$ with $t'$ odd.  But then 
\[
p  = (t')^2 - (b')^2d_K.
\]
Since $p \equiv 3 \pmod{4}$ and $(t')^2 \equiv 1 \pmod{4}$, we must have $(b')^2d_K \equiv 2 \pmod{4}$.  But since $b'$ is odd and $d_K \equiv 0$ or $1 \pmod{4}$, this is impossible.  We conclude that if $p \equiv 3 \pmod{4}$ then $p$ cannot be anomalous. 
\end{proof}

\begin{lem} \label{2torspt}
If $|E(\F_p)| \equiv 2 \pmod{4}$ then $E(\F_p) \simeq E'(\F_p)$.
\end{lem}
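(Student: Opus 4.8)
The plan is to split $E(\F_p)$ into its $2$-primary and odd parts and to match each against the corresponding part of $E'(\F_p)$. Since $E$ and $E'$ are isogenous they have the same number of rational points, so $|E'(\F_p)| = |E(\F_p)| \equiv 2 \pmod 4$. Writing $E(\F_p) \simeq \Z/d_1\Z \times \Z/d_2\Z$ with $d_1 \mid d_2$, the hypothesis $v_2(|E(\F_p)|) = 1$ forces $v_2(d_1) = 0$, so the $2$-Sylow subgroup of $E(\F_p)$ is cyclic of order $2$; the identical count applies to $E'(\F_p)$. Hence the $2$-primary parts of the two groups are both isomorphic to $\Z/2\Z$.

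The substantive step is to show the odd parts agree. I would invoke Lenstra's isomorphism \eqref{eq_lenstra}: as abelian groups $E(\F_p) \simeq \mathcal{O}/(\pi-1)$ and $E'(\F_p) \simeq \mathcal{O}'/(\pi-1)$, where $\mathcal{O} = \End(E)$ and $\mathcal{O}' = \End(E')$ are the two orders, each containing the common Frobenius $\pi$. By Kohel's result $[\mathcal{O}:\mathcal{O}']$ equals $2$, $1/2$, or $1$, in every case a power of $\ell = 2$, so for every odd prime $\ell$ the localizations coincide: $\mathcal{O} \otimes \Zl = \mathcal{O}' \otimes \Zl$. Tensoring the two Lenstra isomorphisms with $\Zl$ then yields $E(\F_p)[\ell^\infty] \simeq (\mathcal{O}\otimes\Zl)/(\pi-1) \simeq (\mathcal{O}'\otimes\Zl)/(\pi-1) \simeq E'(\F_p)[\ell^\infty]$. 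Running over all odd $\ell$ shows the odd parts of $E(\F_p)$ and $E'(\F_p)$ are isomorphic, and combined with the previous paragraph this gives $E(\F_p) \simeq E'(\F_p)$.

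The only real obstacle is this odd-part comparison; the $2$-part is immediate from the $2$-adic valuation of the order. An alternative route is to argue entirely through the Heuberger criterion \eqref{heuberger_criterion}: because the isogeny has degree $2$ we have $\mathbf{P} = \{2\}$, so the single condition controlling $E(\F_p) \simeq E'(\F_p)$ is $v_2(a-1) \le v_2(b) - s_2$, and one would verify that this inequality holds whenever $v_2(|E(\F_p)|) = 1$. This collapses the lemma to a single $2$-adic estimate, but bookkeeping $v_2(a-1)$, $v_2(b)$, and $s_2$ explicitly is fussier than the structural argument, so I would lead with the structural version and keep the criterion as a cross-check.
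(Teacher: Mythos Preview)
Your proof is correct and follows essentially the same outline as the paper's: split into the $2$-primary and odd parts, observe that the $2$-Sylow of each group is forced to be $\Z/2\Z$ by the valuation condition, and then match the odd parts. The paper dispatches the odd-part comparison by citing \cite[Cor.~3]{cullinan1}, whereas you supply a self-contained argument via Lenstra's isomorphism and Kohel's index result; these are the same fact proved two ways, so the approaches coincide.
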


\begin{proof}
Since $E$ and $E'$ are 2-isogenous, the prime-to-2 parts of $E(\F_p)$ and $E'(\F_p)$ are isomorphic \cite[Cor.~3]{cullinan1}.  Since each has a single point of order 2, the result follows by the structure theorem for finite abelian groups.
\end{proof}

\begin{thm}
If $|E(\F_p)| \equiv 2 \pmod{4}$ then $E(\F_{p^2}) \simeq E'(\F_{p^2})$.
\end{thm}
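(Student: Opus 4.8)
The plan is to reduce everything to the position of $E$ and $E'$ on the 2-volcano $V_p$, that is, to whether the reduced isogeny $E \to E'$ is horizontal or vertical, and to show that the hypothesis $\#E(\F_p) \equiv 2 \pmod{4}$ forces it to be horizontal. By Kohel's trichotomy the relative index $[\End(E):\End(E')]$ is $1$, $2$, or $1/2$. If it is $1$ the isogeny is horizontal, $\End(E) \simeq \End(E')$ as orders in $\OK$, and Frobenius is the \emph{same} element $\pi = a+b\omega$ of the common order $\mathcal{O}$; Lenstra's formula \eqref{eq_lenstra} then gives $E(\F_{p^2}) \simeq \mathcal{O}/(\pi^2-1) \simeq E'(\F_{p^2})$ immediately (equivalently, $\mathbf{P} = \emptyset$, so criterion \eqref{heuberger_criterion} is vacuously satisfied for every $m$). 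So the entire content of the theorem is to rule out a vertical reduced isogeny.

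Suppose then the isogeny is vertical, so $\mathcal{O} = \mathcal{O}_g$ and $\mathcal{O}' = \mathcal{O}_{g'}$ with the larger of $g,g'$ equal to twice the smaller. Since both conductors divide the conductor $f$ of $\Z[\pi]$, this gives $2 \mid f$. The first key step is the identity $f = \abs{b}$: because $\Z[\pi] = \Z + \Z b\omega$, the conductor of $\Z[\pi]$ in $\OK$ is exactly $\abs{b}$. Hence $b$ is even, and (using $4p = t^2 - b^2 d_K$, which forces $t$ even as well) I would write $t = 2s$, $b = 2b'$. Substituting into $\#E(\F_p) = 1+p-t$ together with $p = s^2 - b'^2 d_K$ yields the clean expression
\[
\#E(\F_p) = (s-1)^2 - b'^2 d_K.
\]
Reducing mod $4$ and splitting on the discriminant finishes the vertical case: if $D \equiv 2,3 \pmod{4}$ then $d_K = 4D \equiv 0$, while if $D \equiv 1 \pmod{4}$ then $d_K = D \equiv 1$; since integer squares are $\equiv 0,1 \pmod{4}$, the right-hand side is never $\equiv 2 \pmod{4}$. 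This contradicts the hypothesis, so a vertical reduction is impossible and we conclude by the horizontal analysis.

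The main obstacle — really the only nonformal step — is the conductor bookkeeping that converts the geometric statement ``the isogeny is vertical'' into the arithmetic statement ``$2 \mid b$'': pinning down $f = \abs{b}$ from $\Z[\pi] = \Z + \Z b\omega$, and verifying that a vertical 2-isogeny forces $2 \mid \max(g,g') \mid f$. After that the mod-$4$ computation is routine, though one must treat the two residue classes of $D$ separately since $d_K$ contributes differently mod $4$. I would also check the degenerate possibility that the isogeny is ascending rather than descending, but the divisibility argument is symmetric in $g$ and $g'$, so both subcases collapse to the same conclusion $2 \mid b$.
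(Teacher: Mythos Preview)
Your proof is correct and takes a genuinely different route from the paper's. The paper argues by contradiction through the anomalous-pair machinery: assuming $E(\F_{p^2}) \not\simeq E'(\F_{p^2})$, it invokes the valuation criterion of Theorem~\ref{cull2thm} (namely $v_2(a-1)=1 \le v_2(b)-s_2$) together with the previously established fact $p\equiv 1\pmod 4$ to force $t\equiv 0\pmod 4$ and $a\equiv 3\pmod 4$, and then derives contradictions in the two cases $D\equiv 2,3\pmod 4$ (where $a=t/2$) and $D\equiv 1\pmod 4$ (where $a=(t-b)/2$).

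By contrast, you never touch Theorem~\ref{cull2thm} or the proposition on $p\pmod 4$: you go straight to the volcano dichotomy, observe that a vertical 2-isogeny forces $2\mid\max(g,g')\mid f=|b|$, and then the single identity $\#E(\F_p)=(s-1)^2-b'^2d_K$ (with $t=2s$, $b=2b'$) rules out the residue $2\pmod 4$ by inspection of squares. This is more elementary and also yields more: since you conclude that the reduced isogeny is \emph{horizontal}, Lenstra's formula (or the vacuous case $\mathbf{P}=\emptyset$ in \eqref{heuberger_criterion}) gives $E(\F_{p^m})\simeq E'(\F_{p^m})$ for every $m\ge 1$ at once, so Corollary~\ref{2mod4cor} falls out without the extra appeal to \cite{cullinan2} that the paper makes. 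The paper's approach, on the other hand, stays within the analytic framework already set up for anomalous pairs and reuses the $p\equiv 1\pmod 4$ result just proved.
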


\begin{proof}
If $|E(\F_p)| \equiv 2 \pmod{4}$ then by  Lemma \ref{2torspt} we have $E(\F_p) \simeq E'(\F_p)$. If, in addition, 
$E(\F_{p^2}) \not \simeq E'(\F_{p^2})$, then $(E,E')$ is anomalous whence $p \equiv 1 \pmod{4}$.  Writing $\pi = a + b\omega$ in the notation of Section 2, we have 
\begin{enumerate}
\item $v_{2}(a-1) = 1 \leq v_2(b) - s_2$, \text{ and} 
\item $v_2(a+1) >  v_2(b) - s_2$.
\end{enumerate}
Since $v_2(a-1) = 1$, we have $a \equiv 3 \pmod{4}$.  We also have 
\begin{align} \label{n1_is_2_mod_4}
|E(\F_p)| = 1 + p - t \equiv 2\pmod{4},
\end{align}
hence $t \equiv 0 \pmod{4}$.  Now we divide the argument into two cases based on $D \pmod{4}$, where $D$ is the squarefree integer for which $K = \Q(\sqrt{D})$ is the endomorphism algebra of $E$ (and $E'$).

If $D \equiv 2,3\pmod{4}$, then $a=t/2$ and so $t \equiv 6\pmod{8}$, a contradiction.  If $D \equiv 1 \pmod{4}$, then we first recall the inequality (1).  Since $(E,E')$ is an anomalous pair, we must have $s_2 \geq1$ (otherwise, $\End(E) \simeq \End(E')$), and so we conclude that $v_2(b) \geq 2$.  But when $D \equiv 1 \pmod{4}$, we have $a = (t-b)/2$. Since both $t$ and $b$ must be divisible by 4, we get that $a$ is even.  This contradicts $a \equiv 3 \pmod{4}$, established above. 
\end{proof}

\begin{cor} \label{2mod4cor}
If $|E(\F_p)| \equiv 2 \pmod{4}$ then $E(\F_{p^m}) \simeq E'(\F_{p^m})$ for all positive integers $m$.
\end{cor}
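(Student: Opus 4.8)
The plan is to obtain this as a short chain built from the two results immediately preceding it together with the bootstrapping theorem of \cite{cullinan2}. First I would invoke Lemma \ref{2torspt}: the hypothesis $|E(\F_p)| \equiv 2 \pmod{4}$ forces both groups to have exactly one point of order $2$, and since the prime-to-$2$ parts of $E(\F_p)$ and $E'(\F_p)$ already agree for $2$-isogenous curves, the structure theorem yields $E(\F_p) \simeq E'(\F_p)$. This secures the base of the tower.

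Next I would climb one step using the theorem stated just above the corollary, which shows that under the same hypothesis $E(\F_{p^2}) \simeq E'(\F_{p^2})$. The substantive work lives here rather than in the corollary: that argument is precisely where one checks that the valuation conditions $v_2(a-1)=1$ and $v_2(a+1) > v_2(b)-s_2$ characterizing an anomalous pair are incompatible with $|E(\F_p)| \equiv 2 \pmod{4}$, ruling out failure in the quadratic extension.

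Finally I would appeal to the result recalled in Section \ref{background} (Theorem \ref{cull2thm}, i.e.\ Theorem 2 of \cite{cullinan2}): for ordinary $2$-isogenous curves, if $E(\F_p) \simeq E'(\F_p)$ \emph{and} $E(\F_{p^2}) \simeq E'(\F_{p^2})$, then $E(\F_{p^m}) \simeq E'(\F_{p^m})$ for every positive integer $m$. Combining the three isomorphisms produced above gives the claim at once. The only point to verify is that the quadratic-extension isomorphism is exactly the extra input the bootstrapping theorem requires, so I expect no genuine obstacle — the corollary is essentially a repackaging of the preceding theorem that foregrounds the full tower statement, and the proof should run to just a few lines.
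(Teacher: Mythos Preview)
Your proposal is correct and matches the paper's proof essentially verbatim: the paper simply cites \cite[Thm.~2]{cullinan2} (the bootstrapping result that isomorphism at $m=1$ and $m=2$ forces isomorphism for all $m$), with the $m=1$ and $m=2$ cases supplied by Lemma~\ref{2torspt} and the preceding theorem exactly as you outline. Your write-up makes the two-step input explicit, but the argument is identical.
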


\begin{proof}
This follows from \cite[Thm.~2]{cullinan2}: if $E(\F_{p^m}) \simeq E'(\F_{p^m})$ for $m \in \lbrace 1,2 \rbrace$, then $E(\F_{p^m}) \simeq E'(\F_{p^m})$ for all positive integers $m$. 
\end{proof}

Therefore, \emph{every} pair of curves $E,E'$ over $\F_p$ with $|E(\F_p)| \equiv 2 \pmod{4}$  and that are rationally 2-isogenous have isomorphic Mordell-Weil groups in all finite extensions.  Therefore, any anomalous pair must have $|E(\F_p)| \equiv 0 \pmod{4}$ and $p \equiv 1 \pmod{4}$.  

Next, we define a finer notion of $(E,E')$ being an anomalous pair.  This will carry over to a refined notion of $p$ being an anomalous prime, which will be an important topic in the following sections. Because the 2-Sylow subgroups of $E(\F_{p^2})$ and $E'(\F_{p^2})$ have the same size, but are not isomorphic, we can ask how they differ.  We describe this difference using the notion of defect.

\begin{defn}
Let $E \to E'$ be rationally 2-isogenous elliptic curves over $\Q$ and let $p$ be an anomalous prime.  If 
\begin{align*}
a  &= \max \lbrace i \in \mathbf{N} ~|~ E(\F_{p^2})[2^\infty] \supseteq \Z/2^{i}\Z \times \Z/2\Z^{i} \rbrace,\text{ and} \\
a'  &= \max \lbrace i \in \mathbf{N} ~|~ E'(\F_{p^2})[2^\infty] \supseteq \Z/2^{i}\Z \times 2^{i}\Z \rbrace,
\end{align*}
then we say that $p$ has \textbf{defect} $(a,a')$.
\end{defn}

\begin{rmk}
It is a well-known property of the \textbf{$\ell$-isogeny volcano} (which we will recall in Section \ref{volcanoes}) that if $E$ and $E'$ are $\ell$-isogenous elliptic curves over a finite field $k$ and the $\ell$-Sylow subgroups of $E(k)$ and $E'(k)$ are not isomorphic, then $E(k)[\ell^\infty] \simeq \Z/\ell^u\Z \times \Z/\ell^v\Z$ and $E'(k)[\ell^\infty] \simeq \Z/\ell^{u-1}\Z \times \Z/\ell^{v+1}\Z$ or $E'(k)[\ell^\infty] \simeq \Z/\ell^{u+1}\Z \times \Z/\ell^{v-1}\Z$ for some positive integer $u$ and nonnegative integer $v$.  Theorem \ref{establish_defect} establishes a similar result and relates the defect of an anomalous prime to the 2-valuation of the Frobenius endomorphism. 
\end{rmk}

We now make an observation concerning the 2-Sylow subgroups of anomalous pairs.

\begin{lem} \label{22lem}
Suppose $(E,E')$ is an anomalous pair.  Then $E(\F_p)[2^\infty] \simeq  E'(\F_p)[2^\infty] \simeq \Z/2\Z \times \Z/2\Z$.  
\end{lem}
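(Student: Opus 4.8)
The plan is to read off the $2$-primary part of $E(\F_p)$ directly from Lenstra's isomorphism \eqref{eq_lenstra} together with the arithmetic of the Frobenius element $\pi = a + b\omega$. First I would record the two numerical facts forced on an anomalous pair. The isogeny must be vertical, since a horizontal one would make $\Z[\pi]=\mathcal{O}=\mathcal{O}'$, so that $\mathbf{P}=\emptyset$ and the Heuberger criterion \eqref{heuberger_criterion} would hold for every $m$, contradicting $E(\F_{p^2})\not\simeq E'(\F_{p^2})$; hence $s_2\ge 1$. Moreover, because $\F_{p^2}$ is an even-degree extension, Theorem \ref{cull2thm} places us in its exceptional case, giving $v_2(a-1)=1$; and the $m=1$ instance of \eqref{heuberger_criterion}, valid since $E(\F_p)\simeq E'(\F_p)$, gives $1=v_2(a-1)\le v_2(b)-s_2$, whence $v_2(b)\ge s_2+1\ge 2$.

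Writing $\mathcal{O}=\End(E)=\mathcal{O}_g$ and $R=\mathcal{O}\otimes\Z_2$, Lenstra's formula \eqref{eq_lenstra} yields $E(\F_p)[2^\infty]\simeq R/(\pi-1)$. I would set $u=(a-1)/2$ and $w=b/2$; by the previous paragraph $u$ is an odd integer and $w$ an even integer, and $\gamma:=u+w\omega=(\pi-1)/2$ lies in $R$ (one checks $v_2(w)=v_2(b)-1\ge s_2\ge v_2(g)$, so $w\omega\in\Z_2\cdot g\omega\subseteq R$). The key computation is that
\[
\mathrm{N}(\gamma)=u^2+uw\,\mathrm{Tr}(\omega)+w^2\,\mathrm{N}(\omega)
\]
is \emph{odd}: the first term is odd, while the other two are divisible by the even integer $w$, since $\mathrm{Tr}(\omega)$ and $\mathrm{N}(\omega)$ are integers. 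Therefore $\gamma\in R^\times$, so $(\pi-1)=(2\gamma)=2R$ and $E(\F_p)[2^\infty]\simeq R/2R\simeq\mathcal{O}/2\mathcal{O}\simeq\Z/2\Z\times\Z/2\Z$, the last step because $\mathcal{O}$ is free of rank $2$ over $\Z$.

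Since an anomalous pair satisfies $E(\F_p)\simeq E'(\F_p)$ by definition, the assertion for $E'$ is then immediate; alternatively one repeats the argument verbatim with $\mathcal{O}'=\mathcal{O}_{g'}$, the inequality $v_2(b)\ge s_2+1\ge v_2(g')+1$ again ensuring that $(\pi-1)/2$ is integral and a unit in $\mathcal{O}'\otimes\Z_2$. The hard part will be the conductor bookkeeping: one must verify that dividing $\pi-1$ by $2$ stays inside the (possibly non-maximal) order $\mathcal{O}_g$, i.e.\ that $b/2$ carries enough $2$-divisibility relative to $g$, and this is precisely where the bound $v_2(b)\ge s_2+1$ is spent. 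Once integrality of $\gamma$ is secured, the parity of its norm is automatic, so this integrality check is the crux of the proof.
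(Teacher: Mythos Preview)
Your argument is correct and takes a genuinely different route from the paper's.  The paper first invokes two external results --- \cite[Thm.~1.2]{aw} to rule out cyclic 2-Sylow subgroups, and \cite[p.~742]{miret2} to force the common 2-Sylow to be $(\Z/2^k\Z)^2$ --- and then eliminates $k>1$ by a divisibility count on $|E(\F_{p^2})|=(p+1-t)(p+1+t)$ together with $t\equiv 2\pmod 4$.  You instead go straight through Lenstra's isomorphism: from $v_2(a-1)=1$ and $v_2(b)\ge s_2+1$ you manufacture $\gamma=(\pi-1)/2\in\mathcal{O}_g\otimes\Z_2$ with odd norm, hence a unit, so $(\pi-1)=2R$ and $E(\F_p)[2^\infty]\simeq \mathcal{O}/2\mathcal{O}\simeq(\Z/2\Z)^2$ in one stroke.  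Your approach is more self-contained (no need for \cite{aw} or \cite{miret2}) and arguably cleaner, since it computes the 2-Sylow exactly rather than first constraining its shape and then bounding $k$; the paper's approach, on the other hand, is more elementary in that it avoids any calculation inside $\mathcal{O}\otimes\Z_2$.

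One small slip: in your opening sentence you write that a horizontal isogeny ``would make $\Z[\pi]=\mathcal{O}=\mathcal{O}'$''.  Horizontal only gives $\mathcal{O}=\mathcal{O}'$, not equality with $\Z[\pi]$.  This does not affect your argument, since $\mathcal{O}=\mathcal{O}'$ alone already yields $v_2(g)=v_2(g')$ and hence $\mathbf{P}=\emptyset$, which is what you actually use.
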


\begin{proof}
If $(E,E')$ is an anomalous pair, then we must have $p \equiv 1 \pmod{4}$ and $|E(\F_p)| \equiv 0 \pmod{4}$, as previously established.  If neither curve has full 2-torsion defined over $\F_p$, then the 2-Sylow subgroups of $E(\F_p)$ and $E'(\F_p)$ are cyclic and the curves are rationally 2-isogenous.  By \cite[Thm.~1.2]{aw}, this is not possible.  This establishes that $E(\F_p)[2] \simeq E'(\F_p)[2] \simeq \Z/2\Z \times \Z/2\Z$.

To see that $E(\F_p)[2^\infty] \simeq  E'(\F_p)[2^\infty] \simeq \Z/2\Z \times \Z/2\Z$ as well, recall from \cite[p.~742]{miret2} that if $E$ and $E'$ are 2-isogenous and have isomorphic group structures over $\F_p$, then it must be the case that $E(\F_p)[2^\infty] \simeq  E'(\F_p)[2^\infty] \simeq \Z/2^k\Z \times \Z/2^k\Z$ for some $k$, hence $|E(\F_p)| = p+1-t \equiv 0 \pmod{2^{2k}}$.  Suppose $k>1$.  Then both curves will have at least full $2^{k+1}$-torsion over $\F_{p^2}$, and at least one will have full $2^{k+2}$-torsion (since $E(\F_{p^2}) \not \simeq E'(\F_{p^2})$).  Therefore ,
\[
|E(\F_{p^2})| = (p+1-t)(p+1+t) \equiv 0 \pmod{2^{2k+4}},
\]
and so $p+1+t \equiv 0 \pmod{16}$.  Since $k>1$, we have $p+1-t \equiv 0 \pmod{16}$ as well, which implies that $t \equiv 0 \pmod{8}$.  But this contradicts the fact that for an anomalous pair we must have $t \equiv 2 \pmod{4}$.  This completes the proof.
\end{proof}

We will apply the following result in the proof of Theorem \ref{establish_defect}.
\begin{lem} \label{2_torsion_frob}
Let $E$ be an ordinary elliptic curve defined over a finite field $\Fq$ of odd characteristic.  Let $\pi \in \End(E)$ be the Frobenius endomorphism.  If $v$ is the largest integer such that $\pi^m -1$ factors as $2^v\alpha$ in $\End(E)$, then $E(\F_{q^m})$ has full $2^v$-torsion but not full $2^{v+1}$-torsion. 
\end{lem}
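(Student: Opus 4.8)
The plan is to combine Lenstra's description of the group of rational points with the Smith normal form of the multiplication-by-$(\pi^m-1)$ map. By \eqref{eq_lenstra} we have $E(\F_{q^m}) \simeq \mathcal{O}/(\pi^m-1)\mathcal{O}$, where $\mathcal{O} = \End(E)$ is an order in an imaginary quadratic field, hence a free $\Z$-module of rank two. Writing $\gamma = \pi^m - 1$, the group in question is the cokernel of multiplication by $\gamma$ on $\mathcal{O}$. First I would fix a $\Z$-basis $\{1,\eta\}$ of $\mathcal{O}$, where $\eta$ is a quadratic integer satisfying $\eta^2 = c_0 + c_1\eta$ for some $c_0, c_1 \in \Z$, and record that, with respect to this basis, multiplication by $\gamma = x + y\eta$ is given by the integer matrix
$$
M = \begin{pmatrix} x & y c_0 \\ y & x + y c_1 \end{pmatrix}.
$$
By the theory of Smith normal form, $\mathcal{O}/\gamma\mathcal{O} \simeq \Z/d_1\Z \times \Z/d_2\Z$ with $d_1 \mid d_2$, where $d_1$ is the gcd of the entries of $M$. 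Since $\gcd(x,y)$ divides every entry and conversely the gcd of the entries divides both $x$ and $y$, a one-line computation gives $d_1 = \gcd(x,y)$.

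The second step is to identify the $2$-adic valuation of $d_1$ with the integer $v$ of the statement. On one hand, $2^{w} \mid \gamma$ in $\mathcal{O}$ if and only if $\gamma/2^{w} = (x/2^{w}) + (y/2^{w})\eta$ lies in $\Z \oplus \Z\eta$, which holds exactly when $2^{w}\mid x$ and $2^{w}\mid y$; hence the largest such $w$, namely $v$, equals $\min(v_2(x), v_2(y))$. On the other hand, $v_2(d_1) = v_2(\gcd(x,y)) = \min(v_2(x), v_2(y))$. Therefore $v_2(d_1) = v$.

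Finally I would translate this back into a statement about torsion. For a group $G = \Z/d_1\Z \times \Z/d_2\Z$ with $d_1 \mid d_2$, one has $G[2^{w}] \simeq (\Z/2^{w}\Z)^2$ precisely when $2^{w} \mid d_1$, i.e.\ when $v_2(d_1) \geq w$ (the condition on $d_2$ being automatic since $d_1 \mid d_2$). With $v_2(d_1) = v$ this yields full $2^{v}$-torsion and the failure of full $2^{v+1}$-torsion, as claimed. I do not expect a serious obstacle here: the only point requiring care is that when $\mathcal{O}$ is a non-maximal order one must use the basis $\{1,\eta\}$ with $\eta = g\omega$ rather than $\{1,\omega\}$, but since the gcd computation only uses that $\eta$ is an algebraic integer (so that $\eta^2 \in \mathcal{O}$), the argument is unaffected by the conductor.
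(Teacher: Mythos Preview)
Your proof is correct and takes a genuinely different route from the paper. Both start from Lenstra's isomorphism $E(\F_{q^m})\simeq\mathcal{O}/(\pi^m-1)$; for the easy direction (full $2^v$-torsion) the arguments coincide. For the harder direction, the paper invokes a geometric factorization result for isogenies (Galbraith, Theorem~25.1.2): if $\ker(\pi^m-1)$ contained $E[2^{v+1}]$ then $\pi^m-1$ would factor as $2^{v+1}\beta$ in $\End(E)$, contradicting the definition of $v$. You instead stay entirely in commutative algebra, computing the Smith normal form of multiplication by $\pi^m-1$ on $\mathcal{O}\simeq\Z^2$ and identifying the first invariant factor $d_1$ with $\gcd(x,y)$, whose $2$-adic valuation you then match with $v$. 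Your approach is more elementary and self-contained---it needs no input beyond Lenstra and linear algebra over $\Z$---whereas the paper's proof is shorter but leans on an external isogeny-factorization theorem. Your final remark about non-maximal orders is apt and handles the only subtlety.
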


\begin{proof}
By Lenstra's  theorem \eqref{eq_lenstra} \cite[Thm.~1(a)]{lenstra}, we have $E(\F_{q^m}) \simeq \End(E)/(\pi^m-1)$.   If $\pi^m-1$ factors as $2^v\alpha$, then clearly $E(\F_{q^m})$ has full $2^v$-torsion. By factoring isogenies via \cite[Thm.~25.1.2]{galbraith}, any $\F_q$-rational endomorphism of $E$ whose kernel contains the $2^{v+1}$-torsion points would have to factor as $2^{v+1}\beta$ in $\End(E)$. Thus $E(\F_{q^m})$ has full $2^v$-torsion but not full $2^{v+1}$-torsion.  
\end{proof}

\begin{thm} \label{establish_defect}
Let $E \to E'$ be 2-isogenous elliptic curves over $\Q$ and let $p$ be an anomalous prime.  Suppose $\End(E) = \mathcal{O}_g$ and $\End(E) = \mathcal{O}_{g'}$ are orders of conductor $g$ and $g'$, respectively, in the the imaginary number ring $\OK = \Z + \Z\omega$; write $\pi = a + b\omega$ with $b = \beta g = \beta'g'$.  Then $p$ has defect $(m+1,m)$ or $(m,m+1)$ for some integer $m \geq 2$, where $m = v_2(\beta)$.
\end{thm}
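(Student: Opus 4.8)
The plan is to read off both coordinates of the defect from Lenstra's isomorphism together with Lemma \ref{2_torsion_frob}, thereby reducing the whole statement to a $2$-adic divisibility computation for $\pi^2-1$ inside the two orders. Applying Lemma \ref{2_torsion_frob} over $\F_{p^2}$ (extension degree $2$), the first coordinate of the defect is the largest $v$ for which $\pi^2-1 \in 2^v\mathcal{O}_g$, and the second is the largest $v$ for which $\pi^2-1 \in 2^v\mathcal{O}_{g'}$. So everything comes down to computing these two ``$2$-valuations'' in the (possibly non-maximal) orders and comparing them.

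First I would expand $\pi^2-1 = A + B\omega$ in the basis $\{1,\omega\}$ of $\OK$. Using $\pi^2 = t\pi - p$ gives $A = ta-(p+1)$ and $B = tb$, while expanding $(\pi-1)(\pi+1)$ and substituting the minimal polynomial of $\omega$ gives the closed form $A = (a-1)(a+1) + b^2 c$, where $c = D$ if $D\equiv 2,3 \pmod 4$ and $c = (D-1)/4$ if $D\equiv 1 \pmod 4$. The elementary divisibility fact I need is that in the order $\mathcal{O}_g = \Z \oplus \Z g\omega$ an element $A+B\omega$ with $g\mid B$ is divisible by $2^v$ precisely when $2^v\mid A$ and $2^v \mid (B/g)$. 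Hence the first defect coordinate equals $\min(v_2(A),\, v_2(B)-v_2(g))$ and the second equals $\min(v_2(A),\, v_2(B)-v_2(g'))$.

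Next I would pin down the relevant valuations. Because $p$ is anomalous, the earlier results of this section give $p\equiv 1\pmod 4$, $|E(\F_p)|\equiv 0\pmod 4$, and hence $t\equiv 2\pmod 4$, so $v_2(t)=1$; writing $b = \beta g = \beta' g'$ this yields $v_2(B)-v_2(g) = 1+v_2(\beta)$ and $v_2(B)-v_2(g') = 1+v_2(\beta')$. Since $g$ and $g'$ differ by a single factor of $2$, the integers $v_2(\beta)$ and $v_2(\beta')$ differ by exactly $1$. The crux is to show $v_2(A) \geq 1 + \max(v_2(\beta),v_2(\beta'))$, so that both minima above are controlled by the $B$-terms. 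This is exactly where the anomalous hypotheses enter: Theorem \ref{cull2thm} gives $v_2(a-1)=1$ and $v_2(a+1) > v_2(b)-s_2$, and one checks $v_2(b)-s_2 = \min(v_2(\beta),v_2(\beta'))=:\mu$. In $A = (a-1)(a+1)+b^2 c$ the first summand then has valuation $1+v_2(a+1) \geq \mu+2$, while the second has valuation at least $2v_2(b) \geq 2\mu+2$ (the split on $D \bmod 4$ affects the second summand only through $v_2(c)\geq 0$, which is harmless). Both beat $1+\max(v_2(\beta),v_2(\beta')) = \mu+2$, giving the required inequality.

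Assembling these, the two defect coordinates are exactly $1+v_2(\beta)$ and $1+v_2(\beta')$, which are consecutive integers; the two orderings correspond to whether $\End(E)\supseteq\End(E')$ or $\End(E)\subseteq\End(E')$ (the descending versus ascending direction of the vertical isogeny), producing defect $(m+1,m)$ or $(m,m+1)$, with $m=v_2(\beta)$ in the former case. Finally, the inequality $1 = v_2(a-1) \leq v_2(b)-s_2 = \mu$ forces $\mu\geq 1$, so the smaller coordinate $1+\mu$ is at least $2$ and $m\geq 2$. I expect the main obstacle to be bookkeeping rather than conceptual: establishing the divisibility criterion in the non-maximal order $\mathcal{O}_g$ cleanly, and bounding $v_2(A)$ uniformly across the $D\equiv 1$ and $D\equiv 2,3\pmod 4$ cases. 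The one genuinely load-bearing step is recognizing that the anomalous condition $v_2(a+1)>v_2(b)-s_2$ is precisely what forces $v_2(A)$ to be large enough for the $B$-terms to dominate.
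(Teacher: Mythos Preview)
Your proposal is correct and follows essentially the same route as the paper: both arguments reduce to Lemma \ref{2_torsion_frob} and then compute the exact power of $2$ dividing $\pi^2-1$ inside each of the two orders. Your presentation is mildly slicker in two places---using $\pi^2 = t\pi - p$ to get $B = tb$ uniformly (avoiding the case split on $D\bmod 4$ for the $\omega$-coefficient), and packaging the divisibility test in $\mathcal{O}_g$ as $\min(v_2(A),\,v_2(B)-v_2(g))$ so that the descending and ascending cases are handled symmetrically via $\mu=\min(v_2(\beta),v_2(\beta'))$---but the substance is identical to the paper's computation.
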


\begin{proof}
The isogeny $E \to E'$, initially defined over $\Q$, reduces modulo $p$ to a vertical isogeny (if the reduction were horizontal then $\mathcal{O}_g = \mathcal{O}_{g'}$ and $p$ would not be anomalous).  For the remainder of the proof we assume the isogeny is descending and will conclude that $p$ has defect $(m+1,m)$; an identical argument for ascending isogenies would show that $p$ has defect $(m,m+1)$.  

Write $\End(E) = \mathcal{O}_g = \Z + g\Z\omega$ and $\End(E') = \mathcal{O}_{g'} = \Z + g'\Z\omega$.  We have $g' = 2g$ and also write $\pi = a+ b\omega$ with $b = \beta g$ as established in Section \ref{first_background}. Since $p$ is anomalous and since $v_2(g') = v_{2}(g) + 1$, we have 
\[
v_2(a-1) = 1 \leq v_2(b)-s_2 = v_{2}(\beta) - 1 < v_2(a+1).
\]
Observe that $v_2(\beta) \geq 2$.

Now we compute 
\[
\pi^2 -1 = \begin{cases} (a^2 - 1 + b^2D) + 2ab\omega & \text{if $d_K = 4D$ and $D \equiv 2,3\pmod{4}$, and} \\
(a^2-1 + b^2 \left(\frac{D-1}{4} \right)) + (2ab+b^2)\omega & \text{if $d_K = D$ with $D \equiv 1\pmod{4}$}.\end{cases}
\]

In $\mathcal{O}_g$ we can factor, 
\[
\pi^2 - 1 =
\begin{cases}
&(a^2-1+\beta^2g^2D) + (2\beta) a g\omega,\text{ or}\\
&(a^2-1 + \beta^2g^2 \left(\frac{D-1}{4} \right)) + 2\beta (a + (\beta/2))g\omega,
\end{cases}
\]
depending on $d_K \pmod{4}$. In the first case (since $a$ is odd) and in the second case (since $a$ is odd and $\beta/2$ is even), $\pi^2 - 1$ is divisible in $\mathcal{O}_g$ by $2^{v_2(\beta)+1}$ and no higher power of 2.

Similarly, in $\mathcal{O}_{g'} = \mathcal{O}_{2g}$, $\pi^2-1$ is divisible by $2^{v_2(\beta)}$ and no higher power of 2.  By Lemma \ref{2_torsion_frob}, $E(\F_{p^2})$ has full $2^{v_{2}(\beta)+1}$-torsion (and no higher) and $E'(\F_{p^2})$ has full $2^{v_2(\beta)}$-torsion (and no higher).  Thus $p$ has defect $(m+1,m)$ for some integer $m = v_2(\beta) \geq 2$.  
\end{proof}

In the next section we interpret anomalous primes and their defects in relation to isogeny volcanoes.

\section{Isogeny Volcanoes of Elliptic Curves} \label{volcanoes}

Following a brief recap of the theory of isogeny volcanoes of ordinary elliptic curves, our purpose in this section is to prove a key proposition in service of Theorems \ref{mainthm2} and \ref{equithm}. We do not intend for this to be a complete treatment of the background material; we refer the reader to \cite{sutherland_volcano} for further details and proofs.

Let $q$ be a power of a prime $p$ and $E$ an ordinary elliptic curve over $\Fq$.  Let $V_q$ be the connected component of the 2-isogeny graph (volcano) containing $E$.  Then $V_q$ is a graph whose vertices correspond to elliptic curves defined over $\Fq$ that are 2-power $\Fq$-rationally isogenous to $E$ and edges are $\Fq$-rational 2-isogenies.  Thus, in our setup, $E$ and $E'$ represent adjacent vertices on the graph $V_p$; note that $V_p$ is a subgraph of $V_{p^2}$. 

Let $q$ be a power of $p$ and $T$ the trace of Frobenius over $\Fq$.  Let $\sqf(m)$ denote the squarefee part of an integer $m$.  where $\mathcal{O}_0$ is the endomorphism ring of an elliptic curve lying on the crater of $V_q$. Let $K = \Q(\sqrt{T^2-4q}) = \Q(\sqrt{D})$ where $D = \sqf(T^2-4q)$.  Then 
\[
\disc \mathcal{O}_K = \begin{cases} D & \text{ if } D \equiv 1 \pmod{4}, \text{ and} \\ 4D & \text{ if } D \equiv 2,3\pmod{4}. \end{cases}
\]
A theorem of Kohel \cite[Theorem 7(5)]{sutherland_volcano} shows that for a 2-isogeny volcano $2 \nmid [\mathcal{O}_K\colon \mathcal{O}_0]$.  

The \emph{height} of the volcano $V_q$ is given by \cite[Thm.~7]{sutherland_volcano}
\begin{equation}\label{eqn:volcano_height}
h(V_q) = \frac{1}{2} v_2 \left( \frac{T^2 - 4q}{\disc \mathcal{O}_0} \right) = \frac{1}{2} v_2 \left( \frac{T^2 - 4q}{\disc \mathcal{O}_K} \right).
\end{equation}
We choose the opposite labeling of the height as defined in \cite{sutherland_volcano} (there it is called the \textbf{depth}) and declare the floor of the volcano to have height 0.  In the case that $V_q$ consists of an isolated vertex, we set $h(V_q)=0$.  The subgraph of vertices at level $h(V_q)$ is called the \textbf{crater} of the volcano.    This labeling is more convenient for interpreting the defect of an anomalous prime in terms of the location of $E$ and $E'$.

The endomorphism rings of the elliptic curves at the same level of the volcano are isomorphic, hence the 2-Sylow subgroups at the same level are isomorphic.  Elliptic curves on the floor of a volcano have cyclic 2-Sylow subgroups \cite[\S3]{sutherland_volcano}, say of order $2^\nu$.  Then, for each $0 < m \leq  h_{\rm stab}$, we have the 2-Sylow subgroup at height $m$ is $\Z/2^{m}\Z \times \Z/2^{\nu - m}\Z$.  If $h_{\rm stab} < h(V_p)$ then the volcano is called \textbf{irregular} and $h_{\rm stab}$ is called the \textbf{stability level} \cite[p.~742]{miret2}. By \cite[\S4]{miret2}, all curves between the stability level and the crater have isomorphic 2-Sylow subgroups. We refer to the levels of the volcano between the stability level and the crater as the \textbf{stability zone}. 

\begin{lem} \label{heightlem}
Let $E$ and $E'$ be 2-isogenous elliptic curves defined over $\F_p$.  Let $V_p$ be the isogeny volcano which contains $E$ and $E'$ as adjacent vertices; let $V_{p^2}$ be the isogeny volcano over $\F_{p^2}$.  Suppose $t\equiv 2\pmod{4}$.  Then $h(V_{p^2}) = h(V_p)+1$.
\end{lem}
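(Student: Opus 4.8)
The plan is to compute both heights directly from the formula \eqref{eqn:volcano_height}, which expresses $h(V_q)$ purely in terms of the trace of Frobenius over $\Fq$ and $\disc \mathcal{O}_K$. Since $E$ and $E'$ share the same trace and lie on the same component (recall $V_p$ is a subgraph of $V_{p^2}$), only these two quantities matter, so the whole argument reduces to understanding how the numerator $T^2 - 4q$ and the field $K$ change as we pass from $\F_p$ to $\F_{p^2}$.

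First I would record the relation between the two traces. If $\pi$ is the $\F_p$-Frobenius, it satisfies $\pi^2 - t\pi + p = 0$, so the $\F_{p^2}$-Frobenius is $\pi^2$, with trace $T = t^2 - 2p$ and norm $p^2$. A short computation then gives
\[
T^2 - 4p^2 = (t^2 - 2p)^2 - 4p^2 = t^2(t^2 - 4p).
\]
The key observation is that $\sqf(t^2(t^2-4p)) = \sqf(t^2 - 4p)$, so the imaginary quadratic field $K = \Q(\sqrt{t^2-4p})$ attached to $V_p$ coincides with the one attached to $V_{p^2}$; in particular $\disc \mathcal{O}_K$ is the same in both instances of the height formula. (This is exactly where the second equality in \eqref{eqn:volcano_height}, valid because Kohel's theorem forces the index $[\mathcal{O}_K : \mathcal{O}_0]$ to be odd, lets me work with $\disc \mathcal{O}_K$ rather than $\disc \mathcal{O}_0$ throughout.)

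Then I would substitute into \eqref{eqn:volcano_height} and take the difference:
\[
h(V_{p^2}) - h(V_p) = \frac{1}{2}\, v_2\!\left(\frac{t^2(t^2-4p)}{\disc \mathcal{O}_K}\right) - \frac{1}{2}\, v_2\!\left(\frac{t^2-4p}{\disc \mathcal{O}_K}\right) = \frac{1}{2}\, v_2(t^2) = v_2(t).
\]
The hypothesis $t \equiv 2 \pmod{4}$ gives $v_2(t) = 1$, whence $h(V_{p^2}) = h(V_p) + 1$. The calculation itself is short; the only genuine subtlety—and the step I expect to need the most care—is the identification of the two CM fields (and hence of the two values of $\disc \mathcal{O}_K$) via the squarefree-part computation, together with the remark that the height formula depends only on the trace and $q$ and so is independent of which curve on the component we use.
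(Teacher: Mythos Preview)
Your proof is correct and follows essentially the same route as the paper: both derive $T = t^2 - 2p$, factor $T^2 - 4p^2 = t^2(t^2 - 4p)$, and read off the extra $v_2(t^2) = 2$ in the numerator of \eqref{eqn:volcano_height}. The paper writes the computation with $\disc \mathcal{O}_0$ in the denominator and leaves the invariance of that denominator implicit, whereas you work with $\disc \mathcal{O}_K$ and explicitly justify via the squarefree-part observation that the CM field is unchanged; this is a minor presentational difference, not a different argument.
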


\begin{proof}
Let $t$ be the trace of $\pi_E$ and $T$ the trace of $\pi_E^2$.  By assumption $v_2(t)=1$. We have $T = t^2 - 2p$ since $|E(\F_{p^2})| = (p+1-t)(p+1+t)$.  Then
\begin{align*}
h(V_{p^2}) &=  \frac{1}{2} v_2 \left( \frac{T^2 - 4p^2}{\disc \mathcal{O}_0} \right) 
= \frac{1}{2} v_2 \left( \frac{(T-2p)(T+2p)}{\disc \mathcal{O}_0} \right)
= \frac{1}{2} v_2 \left( \frac{t^2 - 4p}{\disc \mathcal{O}_0} \, t^2 \right) 
= h(V_p) + 1.
\end{align*}
\end{proof}

\begin{rmk} The hypothesis that $t \equiv 2 \pmod{4}$ means that this lemma will be applicable to the case of anomalous pairs of elliptic curves.
\end{rmk}

\begin{prop}
Let $E$ and $E'$ be 2-isogenous elliptic curves defined over a finite field $\F_p$ and suppose $(E,E')$ is an anomalous pair.  Then:
\begin{itemize}
\item $V_p$ is irregular, and
\item $E$ and $E'$ represent adjacent edges on $V_p$ in the stability zone, and
\item $E$ and $E'$ do not both lie in the stability zone on $V_{p^2}$.
\end{itemize}
\end{prop}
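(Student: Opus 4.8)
The plan is to translate each of the three bullets into a statement about the $2$-Sylow subgroups of $E$ and $E'$ at the relevant levels of the volcano, and then to play the rigidity of these subgroups in the \emph{regular} zone (where they are governed by the formula $\Z/2^m\Z \times \Z/2^{\nu-m}\Z$) against their constancy in the stability zone. The two starting observations are that, exactly as in the proof of Theorem~\ref{establish_defect}, the $\Q$-isogeny $E \to E'$ reduces mod $p$ to a \emph{vertical} isogeny (otherwise $\End(E) \simeq \End(E')$ and $p$ is not anomalous), so $E$ and $E'$ occupy two distinct adjacent levels of $V_p$; and that, by Lemma~\ref{22lem}, both have $2$-Sylow subgroup over $\F_p$ isomorphic to $\Z/2\Z \times \Z/2\Z$.

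For the first two bullets I would argue as follows. The curves sit at adjacent heights $h$ and $h+1$ yet share the $2$-Sylow structure $\Z/2\Z \times \Z/2\Z$, so I compare this with the regular-zone description. Neither curve lies on the floor, whose $2$-Sylow is cyclic, so both are at height $\geq 1$. If the lower of the two heights were strictly below the stability level, then both $h$ and $h+1$ would lie in the regular zone and the formula would force $\Z/2^h\Z \times \Z/2^{\nu-h}\Z = \Z/2^{h+1}\Z \times \Z/2^{\nu-h-1}\Z = \Z/2\Z\times\Z/2\Z$; the first equality pins down $h=1$, $\nu=2$, and the second then reads $\Z/4\Z = \Z/2\Z\times\Z/2\Z$, a contradiction. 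Hence the lower height is already $\geq h_{\mathrm{stab}}$, so both curves lie in the stability zone (second bullet), and since they occupy two distinct levels of that zone we get $h(V_p) \geq h_{\mathrm{stab}}+1$, i.e.\ $V_p$ is irregular (first bullet).

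For the third bullet I would first note that the $\F_p$-isogeny and the endomorphism rings $\mathcal{O}_g,\mathcal{O}_{g'}$ are unchanged under base change to $\F_{p^2}$ (for ordinary curves all endomorphisms are already defined over the base field), so $E$ and $E'$ are again adjacent vertices at distinct levels of $V_{p^2}$, consistently with Lemma~\ref{heightlem}, which raises the height by one. By Theorem~\ref{establish_defect} the prime $p$ has defect $(m+1,m)$ or $(m,m+1)$, so $E(\F_{p^2})[2^\infty] \not\simeq E'(\F_{p^2})[2^\infty]$; equivalently, since the prime-to-$2$ parts of $E(\F_{p^2})$ and $E'(\F_{p^2})$ coincide, the two $2$-Sylow subgroups over $\F_{p^2}$ are non-isomorphic. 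Were $E$ and $E'$ both in the stability zone of $V_{p^2}$, they would have isomorphic $2$-Sylow subgroups, a contradiction; this yields the third bullet.

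The main obstacle I anticipate is the case analysis in the middle step: I must rule out the ``turnaround'' configuration in which two adjacent regular-zone levels carry isomorphic $2$-Sylow subgroups (which happens abstractly at heights $m,m+1$ when $\nu = 2m+1$). This is precisely why it is essential to use Lemma~\ref{22lem} in its sharp form---that the shared structure is the specific group $\Z/2\Z\times\Z/2\Z$, not merely that the two subgroups are abstractly isomorphic---together with the fact that the floor is cyclic, so that the only regular-zone level with structure $\Z/2\Z\times\Z/2\Z$ is the single height $h=1$ forced by $\nu=2$. A secondary point to handle with care is the bookkeeping of the height/depth convention and the precise extent of the stability zone (that it includes the stability level and runs up to the crater), so that ``lower height $\geq h_{\mathrm{stab}}$'' genuinely places both curves in the stability zone.
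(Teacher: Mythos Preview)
Your proposal is correct and follows essentially the same approach as the paper: both argue that the vertical isogeny places $E,E'$ at adjacent levels, invoke Lemma~\ref{22lem} to get the shared $\Z/2\Z\times\Z/2\Z$ structure, observe that neither lies on the (cyclic) floor, and conclude from the isomorphic $2$-Sylow subgroups that both lie in the stability zone and $V_p$ is irregular, while over $\F_{p^2}$ the non-isomorphic $2$-Sylow subgroups force at least one out of the stability zone. The paper dispatches the middle step in one line (``since the $2$-Sylow subgroups are isomorphic, $V_p$ is an irregular volcano and the curves must lie in the stability zone''), whereas you explicitly rule out the regular-zone ``turnaround'' configuration by pinning down $h=1,\ \nu=2$ and deriving the contradiction; this is a welcome elaboration of what the paper regards as immediate from the definitions and \cite[\S4]{miret2}.
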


\begin{proof}
This is just a matter of terminology.  Since $(E,E')$ is an anomalous pair, they are vertically isogenous.  By Lemma \ref{22lem}, we have $E(\F_p)[2^\infty] \simeq E'(\F_p)[2^\infty] \simeq \Z/2\Z \times \Z/2\Z$, hence neither curve lies on the floor of the volcano $V_p$.  Since the 2-Sylow subgroups are isomorphic, $V_p$ is an irregular volcano and the curves must lie in the stability zone.  However, over $\F_{p^2}$ the 2-Sylow subgroups are not isomorphic, hence at least one curve lies outside the stability zone.
\end{proof}

Note that since $\disc \mathcal{O}_0 = \disc \mathcal{O}_K [\mathcal{O}_K \colon \mathcal{O}_0]^2$ and $[\mathcal{O}_K \colon \mathcal{O}_0]$ is odd, we have that $\disc \mathcal{O}_0 \equiv \disc \mathcal{O}_K \pmod{8}$.  Turning now to the endomorphism rings, we distinguish between the congruence classes $\disc \mathcal{O}_0 \equiv 0,1,4,5 \pmod{8}$.  In these cases, the shape of the crater corresponds to the discriminant in the following way, as established by \cite[Thm.~7]{sutherland_volcano}.  When $\disc \mathcal{O}_0 \equiv 0 \pmod{4}$ or $\disc \mathcal{O}_0 \equiv 5 \pmod{8}$ then the volcanoes have shapes
\begin{center}
\begin{tikzpicture}[scale=1.5,sizefont/.style={scale = 2}]

\draw[ultra thick] (1,3) node {${\color{red}\bullet}$};
\draw[ultra thick] (3,3) node {${\color{red}\bullet}$};
\draw[thick,red] (1,3) -- (3,3);

\draw[ultra thick] (0.5,2) node {$\bullet$};
\draw[ultra thick] (1.5,2) node {$\bullet$};
\draw[ultra thick] (2.5,2) node {$\bullet$};
\draw[ultra thick] (3.5,2) node {$\bullet$};
\draw[thick] (1,3) -- (0.5,2);
\draw[thick] (1,3) -- (1.5,2);
\draw[thick] (3,3) -- (2.5,2);
\draw[thick] (3,3) -- (3.5,2);

\draw[dashed] (.25,1.5) -- (0.5,2);
\draw[dashed] (.75,1.5) -- (0.5,2);
\draw[dashed] (2.25,1.5) -- (2.5,2);
\draw[dashed] (2.75,1.5) -- (2.5,2);
\draw[dashed] (1.25,1.5) -- (1.5,2);
\draw[dashed] (1.75,1.5) -- (1.5,2);
\draw[dashed] (3.25,1.5) -- (3.5,2);
\draw[dashed] (3.75,1.5) -- (3.5,2);

\draw[ultra thick] (6,3) node {${\color{red}\bullet}$};
\draw[ultra thick] (5,2) node {$\bullet$};
\draw[ultra thick] (6,2) node {$\bullet$};
\draw[ultra thick] (7,2) node {$\bullet$};

\draw[dashed] (5.25,1.5) -- (5,2);
\draw[dashed] (4.75,1.5) -- (5,2);
\draw[dashed] (6.25,1.5) -- (6,2);
\draw[dashed] (5.75,1.5) -- (6,2);
\draw[dashed] (7.25,1.5) -- (7,2);
\draw[dashed] (6.75,1.5) -- (7,2);

\draw[thick] (6,3) -- (6,2);
\draw[thick] (6,3) -- (7,2);
\draw[thick] (6,3) -- (5,2);

\draw[thick] (4.4,2.5) node {or};
\end{tikzpicture}
\end{center}
respectively (with the crater highlighted in red).  If $\disc \mathcal{O}_0 \equiv 1 \pmod{8}$ then the crater forms a cycle whose length is the order of a certain element in the class group of $\mathcal{O}_0$, as depicted in the following figure.

\begin{center}
\begin{tikzpicture}[scale=1.5,sizefont/.style={scale = 2}]
\draw[ultra thick] (0,1) node {${\color{red}\bullet}$};
\draw[ultra thick] (-1,0) node {${\color{red}\bullet}$};
\draw[ultra thick] (-2/3,-1) node {${\color{red}\bullet}$};
\draw[ultra thick] (2/3,-1) node {${\color{red}\bullet}$};

\draw[ultra thick,red] (0,1) -- (-1,0);
\draw[ultra thick,red, dotted] (0,1) -- (1,0);
\draw[ultra thick,red,dotted] (1,0) -- (2/3,-1);
\draw[ultra thick,red]  (-2/3,-1) -- (2/3,-1);
\draw[ultra thick,red]  (-2/3,-1) -- (-1,0);

\draw[ultra thick] (-1,-4/3) node {${\bullet}$};
\draw[ultra thick] (-4/3,-4/3) node {${\bullet}$};
\draw[ultra thick] (-1,-5/3) node {${\bullet}$};

\draw[ultra thick] (-1,-4/3) -- (-4/3,-4/3);
\draw[ultra thick] (-1,-4/3) -- (-1,-5/3);
\draw[ultra thick] (-1,-4/3) -- (-2/3,-1);

\draw[ultra thick, dotted] (-4/3,-4/3) -- (-5/3,-5/3);
\draw[ultra thick, dotted] (4/3,-4/3) -- (5/3,-5/3);

\draw[ultra thick, dotted] (-4/3,-4/3) -- (-5/3,-3/3);
\draw[ultra thick, dotted] (4/3,-4/3) -- (5/3,-3/3);

\draw[ultra thick, dotted] (-3/3,-5/3) -- (-4/3,-6/3);
\draw[ultra thick, dotted] (3/3,-5/3) -- (4/3,-6/3);

\draw[ultra thick, dotted] (-3/3,-5/3) -- (-2/3,-6/3);
\draw[ultra thick, dotted] (3/3,-5/3) -- (2/3,-6/3);

\draw[ultra thick, dotted] (-6/3,1/3) -- (-8/3,1/3);
\draw[ultra thick, dotted] (6/3,1/3) -- (8/3,1/3);

\draw[ultra thick, dotted] (-6/3,-1/3) -- (-8/3,-1/3);
\draw[ultra thick, dotted] (6/3,-1/3) -- (8/3,-1/3);

\draw[ultra thick, dotted] (-6/3,1/3) -- (-6/3,3/3);
\draw[ultra thick, dotted] (6/3,1/3) -- (6/3,3/3);

\draw[ultra thick, dotted] (-6/3,-1/3) -- (-6/3,-3/3);
\draw[ultra thick, dotted] (6/3,-1/3) -- (6/3,-3/3);

\draw[ultra thick, dotted] (1/3,6/3) -- (1/3,8/3);
\draw[ultra thick, dotted] (1/3,6/3) -- (3/3,6/3);

\draw[ultra thick, dotted] (-1/3,6/3) -- (-1/3,8/3);
\draw[ultra thick, dotted] (-1/3,6/3) -- (-3/3,6/3);

\draw[ultra thick] (1,-4/3) node {${\bullet}$};
\draw[ultra thick] (4/3,-4/3) node {${\bullet}$};
\draw[ultra thick] (1,-5/3) node {${\bullet}$};

\draw[ultra thick] (1,-4/3) -- (4/3,-4/3);
\draw[ultra thick] (1,-4/3) -- (1,-5/3);
\draw[ultra thick] (1,-4/3) -- (2/3,-1);

\draw[ultra thick] (-5/3,0) node {${\bullet}$};
\draw[ultra thick] (-6/3,1/3) node {${\bullet}$};
\draw[ultra thick] (-6/3,-1/3) node {${\bullet}$};

\draw[ultra thick] (-1,0) -- (-5/3,0);
\draw[ultra thick] (-5/3,0) -- (-6/3,1/3);
\draw[ultra thick] (-5/3,0) -- (-6/3,-1/3);

\draw[ultra thick,dotted] (1,0) -- (5/3,0);
\draw[ultra thick,dotted] (5/3,0) -- (6/3,1/3);
\draw[ultra thick,dotted] (5/3,0) -- (6/3,-1/3);

\draw[ultra thick] (0,5/3) node {${\bullet}$};
\draw[ultra thick] (1/3,6/3) node {${\bullet}$};
\draw[ultra thick] (-1/3,6/3) node {${\bullet}$};

\draw[ultra thick] (0,1) -- (0,5/3);
\draw[ultra thick] (0,5/3) -- (1/3,6/3);
\draw[ultra thick] (0,5/3) -- (-1/3,6/3);
\end{tikzpicture}
\end{center}

\begin{rmk} \label{descending_remark}
Observe that when $\disc \mathcal{O}_0 \equiv 5 \pmod{8}$ and $E$ is on the crater, then all 2-isogenies from $E$ are descending. 
\end{rmk}

We now discuss some aspects of the volcano $V_q$ in terms of a matrix representation of Frobenius.  We will use this material in the proof of Theorem \ref{nathan_thm}.  We continue with the notation from earlier in this section.  If $p$ is a prime number, then the Frobenius endomorphism at $p$ has a representative conjugacy class in $\GL_2(\Z_2)$ via the 2-adic representation.  Let $F \in \GL_2(\Z_2)$ be a matrix in this conjugacy class. For any positive integer $k$, we have $\det(F) \equiv q \pmod{2^k}$.  We note that a unit in $\Z_2$ is a square in $\Z_2$ if and only if it is $1$ modulo $8$.  Therefore, it still makes sense to take $\sqf(\alpha) \pmod{8}$ for an $\alpha\in \Z_2$.

Suppose $F = -I + 2^m M$ where $m \ge 2$ and $M =\left( \begin{smallmatrix} x & y \\ z & w\end{smallmatrix} \right)\in \Mat_2(\Z_2)$. This implies 
\[
q \equiv (-1+2^m x)(-1+2^m w)- 2^{2m} yz \equiv 1 -2^m(x+w) - 2^{2m}(yz-xw) \pmod{2^k},
\]
and
\begin{eqnarray*}
t^2 - 4q & \equiv & (-2+2^m (x+ w))^2 - 4 \left((-1+2^m x)(-1+2^m w)- 2^{2m} (yz-xw)\right) \pmod{2^k}\\
& \equiv & 2^{2m} \left((x-w)^2+4yz \right) \pmod{2^k}.
\end{eqnarray*}
Moreover, 
\[
\sqf(t^2-4q) \equiv \sqf\left((x-w)^2 + 4yz\right) \pmod{8}.
\]
Therefore, we have the following:
\begin{enumerate}
\item $v_2(\disc \mathcal{O}_0)$ is determined by $\sqf\left((x-w)^2 + 4yz\right) \pmod{8}$, and 
\item $h(V_q)$ is determined by 
\begin{itemize}
\item $v_2\left((x-w)^2+4yz\right)$, and 
\item $\sqf\left((x-w)^2 + 4yz\right) \pmod{8}$.
\end{itemize}
\end{enumerate}

\section{Elliptic Curves over $\Q$} \label{Q}

We now turn to the proof of Theorem \ref{mainthm2}.  Let $E, E'$ be rationally 2-isogenous elliptic curves defined over $\Q$.  Because the 2-isogeny is defined over $\Q$, each curve has at least a rational 2-torsion point.  The exact proportion of anomalous primes is determined by the images of the 2-adic representations of $E$ and $E'$, as we will see below.  For the remainder of this section we will assume that both $G \ddef \im \rho_{E,2}$ and $G' \ddef \im \rho_{E',2}$ have index 3 in $\GL_2(\Z_2)$.  Up to isomorphism, $\GL_2(\Z_2)$ has a unique subgroup of index 3. 

\subsection{Frobenius at Anomalous Primes} \label{rep_setup} 
In this  section we will describe the conjugacy class in $\GL_2(\Z_2)$ associated to Frobenius at an anomalous prime $p$.

If $p$ is anomalous then both $E$ and $E'$ have $E(\F_p)[2^\infty] \simeq E'(\F_p)[2^\infty] \simeq \Z/2\Z \times \Z/2 \Z$ by Lemma \ref{22lem}.  Write $F$ and $F'$ for matrix representatives of the Frobenius classes of $E$ and $E'$, respectively, as elements of $\GL_2(\Z_2)$.  It follows that 
\[
F \equiv F' \equiv I \pmod{2}
\]
and that neither $F \pmod{4}$ nor $F' \pmod{4}$ fixes a cyclic subgroup of $\Z/4\Z \times \Z/4\Z$ of order $4$. 

Since anomalous primes can be partitioned by defect as in Theorem \ref{establish_defect}, let us fix $m \geq 2$ and suppose that $p$ has defect $(m+1,m)$.  In particular, we assume that the isogeny $E \to E'$ is descending. Then we have
\[
E(\F_{p^2})[2^{\infty}] \simeq \Z/2^{a}\Z \times \Z/2^{m+1}\Z \text{ \qquad and \qquad  } E'(\F_{p^2})[2^{\infty}] \simeq \Z/2^{a+1}\Z \times \Z/2^{m}\Z,
\]
where $a \geq m+1$.  Therefore
\begin{align*}
F^2 &\equiv I \pmod{2^{m+1}} \text{ but } F^2\not\equiv I \pmod{2^{m+2}},\text{ and} \\
(F')^2 &\equiv I \pmod{2^{m}} \text{ but } (F')^2\not\equiv I \pmod{2^{m+1}}. 
\end{align*}

We are thus led to the problem of determining, for fixed $m \geq 2$, matrices $A \in \GL_2(\Z_2)$ such that the following are simultaneously satisfied 
\begin{itemize}
\item $A \equiv I \pmod{2}$, and
\item $A \pmod{4}$ does not fix any cyclic subgroup of $\Z/4\Z \times \Z/4\Z$ of order 4, and
\item $A^2 \equiv I \pmod{2^{m+1}}$ \text{ but } $A^2 \not\equiv I \pmod{2^{m+2}}$.
\end{itemize}
It is now an exercise in squaring matrices (which we omit) to conclude that there exist matrices $M,M' \in \Mat_2(\Z_2)$ such that neither $M$ nor $M'$ is $\equiv 0 \pmod{2}$ and that $F$ and $F'$ are, up to conjugation, given by
\begin{align*}
F &= -I + 2^mM \\
F' &= -I + 2^{m-1}M'.
\end{align*}

We finish this subsection by collecting some known results on the Galois theory of torsion point fields and their consequences for anomalous primes.  The important point is that if $k$ is a number field and $E/k$ is an elliptic curve for which $k(E[\ell^n])/k$ has Galois group $\GL_2(\Z/\ell^n\Z)$, then the normal subgroup $\lbrace \pm I \rbrace$ of $\GL_2(\Z/\ell^n\Z)$ is the Galois group of $k(E[\ell^n])/k(x(E[\ell^n]))$, with clear implications for the Frobenius at anomalous primes.

\begin{prop} \label{adelman_prop}
Let $k$ be a number field and $E/k$ an elliptic curve.  Let $\ell$ be a prime number and $n \geq 1$ an integer.  Let $k(E[\ell^n])$ be the $\ell^n$-torsion field of $E$ and $k(x(E[\ell^n]))$ the subfield generated by the $x$-coordinates of the points of $E[\ell^n]$.  Let $G(\ell^n) = \im \overline{\rho}_{{E,\ell^n}} \subseteq \GL_2(\Z/\ell^n\Z)$ be the image of the mod $\ell^n$ representation.  Then  $[k(E[\ell^n]):k(x(E[\ell^n]))] \leq 2$ with $\Gal(k(E[\ell^n])/k(x(E[\ell^n]))) \simeq G(\ell^n) \cap \lbrace \pm I \rbrace$.
\end{prop}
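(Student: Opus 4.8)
The plan is to run the fundamental theorem of Galois theory on the extension $k(E[\ell^n])/k$, identifying $k(x(E[\ell^n]))$ as the fixed field of the subgroup of $G(\ell^n)$ that acts on torsion points only up to sign. First I would record the standard fact that $k(E[\ell^n])/k$ is Galois with $\Gal(k(E[\ell^n])/k) \cong G(\ell^n)$: the Galois group acts faithfully on $E[\ell^n] \cong (\Z/\ell^n\Z)^2$, and $G(\ell^n)$ is by definition the image $\im \overline{\rho}_{E,\ell^n}$ of this action. Thus intermediate fields correspond to subgroups of $G(\ell^n)$, and the result will follow once the subgroup fixing $k(x(E[\ell^n]))$ is identified.

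Next I would pin down that fixing subgroup. Since the $x$-coordinate map is defined over $k$, every $\sigma \in \Gal(k(E[\ell^n])/k)$ satisfies $\sigma(x(P)) = x(\sigma P)$ for all $P \in E[\ell^n]$; hence $\sigma$ fixes $k(x(E[\ell^n]))$ pointwise if and only if $x(\sigma P) = x(P)$ for every $\ell^n$-torsion point $P$. Using the elementary fact that two points of an elliptic curve share an $x$-coordinate exactly when they are equal or negatives of one another, this is equivalent to $\sigma P \in \{P,-P\}$ for all $P \in E[\ell^n]$. Translating through $\overline{\rho}_{E,\ell^n}$, the fixing subgroup is $H = \{A \in G(\ell^n) : Av \in \{v,-v\} \text{ for all } v \in (\Z/\ell^n\Z)^2\}$.

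The heart of the argument, and the one step requiring actual verification, is the linear-algebra claim that any $A \in \GL_2(\Z/\ell^n\Z)$ with $Av \in \{v,-v\}$ for every $v$ must equal $\pm I$, so that $H = G(\ell^n) \cap \{\pm I\}$. I would argue as follows: applying the hypothesis to the standard generators $e_1, e_2$ gives $Ae_i = \epsilon_i e_i$ with $\epsilon_i \in \{\pm 1\}$, and applying it to $e_1 + e_2$ forces $\epsilon_1 e_1 + \epsilon_2 e_2 = \pm(e_1 + e_2)$; comparing coordinates in the free module $(\Z/\ell^n\Z)^2$ yields $\epsilon_1 = \epsilon_2$, whence $A = \pm I$. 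The only place this needs care is when $-1 = 1$ in $\Z/\ell^n\Z$, i.e.\ $\ell^n = 2$, where $\{\pm I\}$ collapses to $\{I\}$ and the conclusion $H = \{I\} = \{\pm I\}$ still holds because the condition $Av = \pm v$ then reads $Av = v$.

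Finally, the Galois correspondence gives $\Gal(k(E[\ell^n])/k(x(E[\ell^n]))) \cong H = G(\ell^n) \cap \{\pm I\}$, and since $\{\pm I\}$ has order at most $2$ the index bound $[k(E[\ell^n]):k(x(E[\ell^n]))] \leq 2$ follows at once. I expect no genuine obstacle beyond bookkeeping; the two points deserving explicit mention are the commutation of the Galois action with the $k$-rational map $x$, and the degenerate case $\ell^n = 2$ flagged above.
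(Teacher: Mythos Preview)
Your argument is correct. The paper does not actually prove this proposition: it simply cites Adelmann's monograph (\cite[Ch.~5]{adelman}) and points to a few figures there. Your self-contained treatment via the Galois correspondence---identifying the subgroup fixing the $x$-coordinates as those $A \in G(\ell^n)$ sending every $v$ to $\pm v$, and then showing this forces $A = \pm I$ by testing on $e_1, e_2, e_1+e_2$---is the standard way to unpack what that reference contains, and it is more informative than the bare citation. The two points you flag (Galois-equivariance of the $k$-rational map $x$, and the degenerate case $\ell^n = 2$ where $\{\pm I\}$ collapses) are exactly the right ones to mention; nothing is missing.
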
 

\begin{proof}
This is contained in \cite[Ch.~5]{adelman}; see especially Figs.~5.4, 5.5, 5.7.
\end{proof}

\begin{lem} \label{adelman_lem}
Let $E$ be an elliptic curve over $\Q$ and suppose $p\ne 2$ is a good prime for $E$.  Let $K_{2^n} = \Q(E[2^n])$ with Galois group $\Gal(K_{2^n}/\Q) \simeq G(2^n) \subseteq \GL_2(\Z/2^n\Z)$.  Suppose $\Frob_p \in \Gal(K_{2^n}/\Q)$ is a lift of the Frobenius automorphism at $p$ (so that the decomposition group of $K_{2^n}$ is generated by $\Frob_p$) and suppose that $\overline{\rho}_{E,2^n}(\Frob_p) = F = - I \in G(2^n)$. Then $\F_p(x(E[2^n])) = \F_p$ and $\F_p$ contains no $y$-coordinate of any $2^n$-torsion point of $E$.
\end{lem}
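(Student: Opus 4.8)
The plan is to translate the Galois-theoretic hypothesis $\overline{\rho}_{E,2^n}(\Frob_p) = -I$ into a concrete statement about the $p$-power Frobenius endomorphism acting on the coordinates of torsion points, and then read off both conclusions from a one-line computation. Since $E$ has good reduction at $p$ and $p \nmid 2^n$, the reduction map identifies $E[2^n]$ with the $2^n$-torsion of the reduced curve over $\F_p^{\textup{al}}$ as Galois modules, with inertia acting trivially; thus the decomposition group acts through $\Gal(\F_p^{\textup{al}}/\F_p)$ and $\Frob_p$ acts as the arithmetic Frobenius $\alpha \mapsto \alpha^p$. The key compatibility I would invoke is that, on torsion points of the reduced curve, this arithmetic Frobenius coincides with the $p$-power Frobenius endomorphism $\pi$, namely the map sending $(x,y)$ to $(x^p, y^p)$; this is the convention under which the trace and determinant of $F$ reduce to the trace of Frobenius $t$ and to $p$, as set up in Section~\ref{first_background}. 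Hence the hypothesis $F = -I \in G(2^n)$ says precisely that $\pi(Q) = -Q$ for every $Q \in E[2^n]$.

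Next I would carry out the computation. Fix $Q = (x,y) \in E[2^n]$ on the reduced curve. On the one hand $\pi(Q) = (x^p, y^p)$; on the other hand, in Weierstrass coordinates $-Q = (x, -y)$. Equating $\pi(Q) = -Q$ gives the two scalar identities
\[
x^p = x \qquad \text{and} \qquad y^p = -y.
\]
The first identity holds for every $x$-coordinate of a $2^n$-torsion point, so each such $x$ lies in $\F_p$; therefore $\F_p(x(E[2^n])) = \F_p$, the first assertion. For the second, suppose $y \neq 0$. If $y$ were in $\F_p$ then $y^p = y$, which combined with $y^p = -y$ forces $2y = 0$, impossible since $\operatorname{char} \F_p$ is odd and $y \neq 0$. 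Hence no nonzero $y$-coordinate of a $2^n$-torsion point lies in $\F_p$, which is the second assertion; moreover $y^{p^2} = (-y)^p = y$ shows these coordinates lie in $\F_{p^2}$, consistent with the defect picture developed later.

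I would flag one caveat in the statement: the $2$-torsion points have $y = 0 \in \F_p$, and indeed $-I \equiv I \pmod 2$ leaves $E[2]$ pointwise fixed, so ``no $y$-coordinate'' must be read as referring to nonzero $y$-coordinates (equivalently, to points outside $E[2]$); this is automatic in the intended application, where $n \geq m \geq 2$ and the relevant points have exact order greater than $2$. An alternative, more purely Galois-theoretic route would derive both conclusions from Proposition~\ref{adelman_prop}: since $-I \in G(2^n)$, that proposition gives $\Gal(K_{2^n}/\Q(x(E[2^n]))) = \{\pm I\}$, of order $2$, and $\Frob_p = -I$ fixing $\Q(x(E[2^n]))$ while generating this quadratic extension forces the residue field of the $x$-coordinate field to be $\F_p$ and that of the full torsion field to be $\F_{p^2}$. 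I expect the only genuine obstacle to be bookkeeping: pinning down the arithmetic-versus-geometric Frobenius convention so that $F = -I$ really corresponds to $\pi(Q) = -Q$ and not to its inverse. Once that compatibility is fixed, the computation is immediate.
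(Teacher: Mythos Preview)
Your argument is correct and takes a genuinely different route from the paper's. The paper proves the lemma by invoking Adelmann's framework: it uses Proposition~\ref{adelman_prop} to identify $\Gal(K_{2^n}/\Q(x(E[2^n])))$ with $\{\pm I\}$, appeals to the explicit division polynomials $T_{2^n}$ and $\Lambda_{2^n}$ from \cite{adelman}, and reads off the conclusion from the fact that the decomposition group $\langle \Frob_p\rangle$ has order~$2$ in the quotient, so $\Lambda_{2^n}$ splits completely modulo $p$ while $T_{2^n}$ does not. Your approach bypasses all of this by reducing modulo~$p$ and computing directly: $\pi(Q)=-Q$ translates into the scalar identities $x^p=x$ and $y^p=-y$, from which both conclusions drop out in one line. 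This is more elementary and more transparent; the paper's route has the advantage of situating the statement within the torsion-field formalism it relies on elsewhere, but for this particular lemma your coordinate computation is the cleaner proof.

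Two small remarks. First, your caveat about $y=0$ for points in $E[2]$ is well taken; the paper's phrasing has the same looseness, and as you note the intended reading is nonzero $y$-coordinates (equivalently, $\F_p(E[2^n])\neq\F_p$). Second, your worry about arithmetic versus geometric Frobenius is harmless here precisely because $(-I)^{-1}=-I$, so either convention yields $\pi(Q)=-Q$; you could drop that hedge entirely.
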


\begin{proof}
This is a matter of translating the arithmetic of elliptic curves into the Galois theory of torsion point fields and the  behavior of Frobenius at unramified primes. In particular, it is the ``reduction modulo $p$'' of Proposition \ref{adelman_prop}.  

Since $p$ is an odd prime of good reduction for $E$ it is unramified in $K_{2^n}$, hence we can appeal to the explicit polynomial descriptions in \cite[Table 5.1]{adelman}.  Let $K_{2^n}$ be the splitting field of the polynomial $T_{2^n}(x)$ and $\Q(x(E[2^n]))$ the splitting field of $\Lambda_{2^n}(x)$. 

In general, the field extension $K_{2^n}/\Q(x(E[2^n]))$ has degree 1 or 2, depending on whether $\Q(x(E[2^n]))$ contains any $y$-coordinates of any $2^n$-torsion points (note that if $G(2^n) = \GL_2(\Z/2^n\Z)$ then the extension has degree 2).  We have $\Gal(K_{2^n}/\Q(x(E[2^n]))) \simeq \lbrace \pm I \rbrace \cap G(2^n)$ by Proposition \ref{adelman_prop}, $\Lambda_{2^n}(x)$ splits completely in $\Q(x(E[2^n]))$, and that $K_{2^n}$ is generated over $\Q(x(E[2^n]))$ by a single $y$-coordinate of a single $2^n$-torsion point (see \cite[p.~74]{adelman}).  

The Galois theory of number fields then says that either $T_{2^n}(x)$ splits completely over $\Q(x(E[2^n]))$ or factors as a product of irreducible quadratic polynomials, each of them Galois-conjugate.  In either case, the Galois group $\Gal(K_{2^n}/\Q(x(E[2^n])))$ is the decomposition group at $p$, which is isomorphic to $\langle \Frob_p \rangle$.  The hypothesis that $F \equiv -I \pmod{2^n}$ means that the polynomial $\Lambda_{2^n}(x)$ splits completely modulo $p$, hence $\F_p(x(E[2^n])) = \F_p$.  The fact that Frobenius is non-trivial implies that $\F_{p}(E[2^n])$ is a quadratic extension of $\F_p$, hence contains no $y$-coordinate of any $2^n$-torsion point of $E$.
\end{proof}

Next, we recall a basic fact about towers of torsion fields.  

\begin{thm} \label{lattes_tower}
Let $k$ be a field, $\ell$ a prime number, and $E/k$ an elliptic curve. Then we have the following inclusions of fields for all $n \geq 1$:
\[
\xymatrix{
& \\
  & k(E[\ell^n]) \ar@{-}[d] \ar@{--}[u] \\
\ar@{-}[ur] k(x(E[\ell^n])) \ar@{-}[d] \ar@{--}[u]& k(E[\ell^{n-1}]) \ar@{-}[d] \\
\ar@{-}[ur] k(x(E[\ell^{n-1}])) \ar@{--}[d] & k(E[\ell^{n-2}]) \ar@{--}[d] \\
\ar@{-}[ur] & }
\]
\end{thm}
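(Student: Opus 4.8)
The plan is to derive every inclusion displayed in the diagram from the single containment of torsion subgroups $E[\ell^{n-1}] \subseteq E[\ell^n]$, valid for all $n \geq 1$ because every $\ell^{n-1}$-torsion point is in particular an $\ell^n$-torsion point. First I would dispatch the right-hand column (the full torsion fields): since $k(E[\ell^m])$ is by definition the field of definition over $k$ of the finite set $E[\ell^m]$, the containment $E[\ell^{n-1}] \subseteq E[\ell^n]$ gives $k(E[\ell^{n-1}]) \subseteq k(E[\ell^n])$ at once, and the same reasoning one level down yields $k(E[\ell^{n-2}]) \subseteq k(E[\ell^{n-1}])$, matching the solid vertical edges on the right.

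Next I would handle the left-hand column and the diagonal edges (the $x$-coordinate fields). Because $k(x(E[\ell^m]))$ is generated over $k$ by the $x$-coordinates of the points of $E[\ell^m]$, the inclusion $E[\ell^{m-1}] \subseteq E[\ell^m]$ shows that the generators of $k(x(E[\ell^{m-1}]))$ form a subset of those of $k(x(E[\ell^m]))$, whence $k(x(E[\ell^{m-1}])) \subseteq k(x(E[\ell^m]))$. For the diagonal edges I would note only that the $x$-coordinate of a point is one of its coordinates, so $k(x(E[\ell^m])) \subseteq k(E[\ell^m])$ trivially; the precise nature of this step — that the relative degree is $1$ or $2$, with $\Gal(k(E[\ell^m])/k(x(E[\ell^m]))) \simeq G(\ell^m) \cap \{\pm I\}$ — is exactly Proposition \ref{adelman_prop}, which I would cite rather than reprove. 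Together these account for all the solid edges, and the dashed edges are just continuation markers for the towers in both directions.

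There is essentially no analytic obstacle here, since the statement is a bookkeeping consequence of $E[\ell^{n-1}] \subseteq E[\ell^n]$; the one point that genuinely requires care is the correct reading of the figure. The two fields drawn at the same level, $k(x(E[\ell^n]))$ and $k(E[\ell^{n-1}])$, are to be read as \emph{two} intermediate fields, each lying between $k(x(E[\ell^{n-1}]))$ and $k(E[\ell^n])$, and the picture should not be collapsed into a single linear chain. Indeed $k(E[\ell^{n-1}]) \subseteq k(x(E[\ell^n]))$ fails in general: if $-I \in G(\ell^n)$ (the generic situation), then by Proposition \ref{adelman_prop} the element $-I$ fixes $k(x(E[\ell^n]))$ while sending $y(R) \mapsto -y(R)$ for any $R \in E[\ell^{n-1}]$ of order greater than $2$, so such a $y(R)$ cannot lie in $k(x(E[\ell^n]))$. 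The only level at which the two fields coincide is the bottom with $\ell = 2$, where $k(E[2]) = k(x(E[2]))$ because all $2$-torsion has vanishing $y$-coordinate; this is precisely the degenerate case that makes the interleaving legible.

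Finally I would record the conceptual reason the picture deserves the name \emph{tower}. Under the quotient $x \colon E \to \PP^1$ by $\{\pm 1\}$, multiplication by $\ell$ descends to the Lattès map $\lambda_\ell \colon \PP^1 \to \PP^1$ fixing the image $\infty$ of $O$, and $k(x(E[\ell^m]))$ is exactly the field generated by the $m$-fold $\lambda_\ell$-preimages of $\infty$. From this vantage point the inclusions $k(x(E[\ell^{m-1}])) \subseteq k(x(E[\ell^m]))$ simply express that an $(m-1)$-fold preimage of $\infty$ is also an $m$-fold preimage, i.e. the standard iterated-preimage (arboreal) structure; this observation, while not needed for the proof, is what organizes the $x$-coordinate fields into the left-hand tower.
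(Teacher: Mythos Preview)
The paper states this theorem as a ``basic fact about towers of torsion fields'' and gives no proof whatsoever; it is used only to derive the immediate Corollary that $\F_p(x(E[2^n])) = \F_p$ implies $\F_p(x(E[2^k])) = \F_p$ for $k \le n$. Your argument is correct and is exactly the sort of elementary verification the paper is implicitly taking for granted: each solid edge follows either from $E[\ell^{n-1}] \subseteq E[\ell^n]$ or from the trivial inclusion $k(x(S)) \subseteq k(S)$ for a finite set $S$ of points, and you have identified these correctly. Your additional remarks --- that the two fields drawn at the same height need not be comparable, illustrated via $-I \in G(\ell^n)$, and the Latt\`es-map interpretation of the left column --- are accurate and go beyond what the paper records, but they are commentary rather than proof obligations. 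In short, there is nothing to compare against, and nothing to correct.
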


\begin{cor} 
With all notation as above, suppose $E/\F_p$ is an elliptic curve such that $\F_p(x(E[2^n])) = \F_p$.  Then $\F_p(x(E[2^k])) = \F_p$ for all $k \leq n$. 
\end{cor}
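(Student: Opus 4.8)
The plan is to deduce the result purely from the nesting of torsion subgroups, with Theorem \ref{lattes_tower} supplying the relevant chain of field inclusions; no new arithmetic input is needed. The essential observation is that for $k \leq n$ one has $E[2^k] \subseteq E[2^n]$, since any point killed by $2^k$ is a fortiori killed by $2^n$. Consequently the set of $x$-coordinates $x(E[2^k])$ is a subset of $x(E[2^n])$, and a field generated over $\F_p$ by a subset of a given generating set is contained in the field generated by the whole set.

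First I would record the containment $\F_p(x(E[2^k])) \subseteq \F_p(x(E[2^n]))$ for every $k \leq n$. This can be read off directly from Theorem \ref{lattes_tower}, whose left-hand column exhibits the tower $\F_p(x(E[2^{n-1}])) \subseteq \F_p(x(E[2^n]))$ at each level; iterating these single-step inclusions from level $k$ up to level $n$ gives the desired containment. (Alternatively, one argues in one line from $E[2^k]\subseteq E[2^n]$ as above, without invoking the tower.) Second, I would invoke the hypothesis $\F_p(x(E[2^n])) = \F_p$, so that the chain becomes
\[
\F_p \subseteq \F_p(x(E[2^k])) \subseteq \F_p(x(E[2^n])) = \F_p,
\]
where the leftmost inclusion is automatic. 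Squeezing forces $\F_p(x(E[2^k])) = \F_p$, which is exactly the claim.

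I do not anticipate a genuine obstacle here: the statement is a formal monotonicity property of the fields $\F_p(x(E[2^\bullet]))$ as functions of the exponent. The only point warranting a sentence of care is the passage from the inclusion of torsion subgroups to the inclusion of the corresponding $x$-coordinate fields, i.e.\ that adjoining fewer $x$-coordinates produces a smaller field; this is immediate from the definition of a field generated by a set of elements and requires no properties special to elliptic curves or to $\F_p$.
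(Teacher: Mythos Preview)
Your proof is correct and takes essentially the same approach as the paper: the paper simply writes ``This follows immediately,'' meaning immediately from the tower of inclusions in Theorem~\ref{lattes_tower}, which is exactly what you have spelled out. Your version just makes the squeezing argument explicit.
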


\begin{proof}
This follows immediately.
\end{proof}

\begin{rmk}
One can see this from a representation theory point of view too: if $F \equiv -I \pmod{2^n}$, then $F \equiv -I \pmod{2^k}$ for all $k\leq n$ as well.
\end{rmk}

The next proposition shows that over a finite field $\F_p$, if $E \to E'$ is descending and $F \equiv -I \pmod{2^m}$ then we automatically get that $F' \equiv -I \pmod{2^{m-1}}$.  This does not immediately imply that that $p$ is anomalous because it could further be the case that $F' \equiv -I \pmod{2^{m}}$.  This will be used in the proof of Theorem \ref{m_proportion} below where we argue that $F' \equiv -I \pmod{2^m}$ for half of the primes for which $F \equiv -I \pmod{2^m}$  and $F' \equiv -I \pmod{2^{m-1}}$ for the other half.

\begin{prop} \label{F'form}
Let $E$ and $E'$ be ordinary 2-isogenous elliptic curves defined over $\F_p$ and suppose that the isogeny $E \to E'$ is descending.  Suppose $E(\F_p)[2^\infty] \simeq E'(\F_p)[2^\infty] \simeq \Z/2\Z  \times \Z/2\Z$ and that $F \equiv -I \pmod{2^m}$.  Then $F' \equiv -I \pmod{2^{m-1}}$.
\end{prop}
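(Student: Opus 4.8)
The plan is to read off both Frobenius matrices as the action of a single Frobenius element on the two Tate modules and to relate them through the isogeny. The $2$-isogeny $\phi\colon E\to E'$ induces a $\Z_2$-linear map $\phi_*\colon T_2E\to T_2E'$ that is equivariant for $\Gal_{\F_p}$. Writing $F,F'$ for the matrices of Frobenius on $T_2E$ and $T_2E'$ with respect to some choice of $\Z_2$-bases, and $\Phi\in\Mat_2(\Z_2)$ for the matrix of $\phi_*$, equivariance reads $\Phi F = F'\Phi$, so that $F' = \Phi F\Phi^{-1}$ in $\GL_2(\Q_2)$. The only fact I need about $\Phi$ is a size estimate: since $\deg\phi = 2$, the map $\phi_*$ is injective with image of index $2$, hence $\det\Phi\in 2\,\Z_2^\times$. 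I would justify this via the dual isogeny $\hat\phi\colon E'\to E$: from $\hat\phi\circ\phi = [2]$ we get $\hat\phi_*\circ\phi_* = 2\cdot\mathrm{id}_{T_2E}$, which forces $\phi_*$ injective and $v_2(\det\phi_*) = v_2(\deg\phi) = 1$.

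The computation is then immediate, and its conclusion is basis-free. Since $\det\Phi$ has $2$-adic valuation exactly $1$ and $\Phi\in\Mat_2(\Z_2)$, the adjugate formula $\Phi^{-1} = (\det\Phi)^{-1}\,\mathrm{adj}(\Phi)$ with $\mathrm{adj}(\Phi)\in\Mat_2(\Z_2)$ gives $2\Phi^{-1} = \big(2/\det\Phi\big)\,\mathrm{adj}(\Phi)\in\Mat_2(\Z_2)$, because $2/\det\Phi$ is a $2$-adic unit. Writing the hypothesis $F\equiv -I\pmod{2^m}$ as $F = -I + 2^m G$ with $G\in\Mat_2(\Z_2)$ and conjugating,
\[
F' = \Phi(-I + 2^m G)\Phi^{-1} = -I + 2^m\,\Phi G\Phi^{-1}.
\]
The correction term satisfies $\Phi G\Phi^{-1} = \tfrac12\big(\Phi G\big)\big(2\Phi^{-1}\big)\in\tfrac12\Mat_2(\Z_2)$, since both $\Phi G$ and $2\Phi^{-1}$ lie in $\Mat_2(\Z_2)$. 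Hence $2^m\,\Phi G\Phi^{-1}\in 2^{m-1}\Mat_2(\Z_2)$, i.e.\ $F'\equiv -I\pmod{2^{m-1}}$. As $-I$ is central, this congruence is invariant under conjugation, so it holds for any representative of the Frobenius class of $E'$, independently of the bases chosen above.

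The one point deserving care is the structural claim about $\phi_*$ — that it is injective with cokernel of order $2$ and that $F' = \Phi F\Phi^{-1}$ — which I would spell out (Galois-equivariance of the Tate-module functor plus the determinant computation) rather than merely assert. I note that the argument uses neither the hypothesis that the isogeny is \emph{descending} nor the full force of $E(\F_p)[2^\infty]\simeq\Z/2\Z\times\Z/2\Z$: the bound ``$F'$ loses at most one power of $2$'' holds for any $\F_p$-rational $2$-isogeny. The descending hypothesis is instead what is needed for the sharper companion statement deciding which power is actually attained (and hence the defect); that refinement is carried out in the subsequent analysis, where one shows $F'\equiv -I\pmod{2^m}$ for half of the relevant primes and $F'\equiv -I\pmod{2^{m-1}}$ but not $\pmod{2^m}$ for the other half.
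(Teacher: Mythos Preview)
Your argument is correct and takes a genuinely different route from the paper's. The paper argues indirectly through the group structures over $\F_{p^2}$: from $F\equiv -I\pmod{2^m}$ it deduces that $E(\F_{p^2})$ has full $2^{m+1}$-torsion, then splits into cases according to whether $E'(\F_{p^2})[2^\infty]$ is isomorphic to $E(\F_{p^2})[2^\infty]$ or not. In each case it obtains a congruence for $(F')^2$, and combines this with the constraints $F'\equiv I\pmod 2$ and ``$F'\pmod 4$ fixes no cyclic subgroup of order $4$'' (which is where the hypothesis $E'(\F_p)[2^\infty]\simeq\Z/2\Z\times\Z/2\Z$ enters) to pin down $F'$ by a direct matrix check. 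The descending hypothesis is used in the non-isomorphic case to know which of $E,E'$ loses a power of $2$ in its full-torsion level.

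Your approach bypasses all of this by exploiting the Galois-equivariance of $\phi_*\colon T_2E\to T_2E'$ and the determinant computation $v_2(\det\Phi)=1$. This is cleaner and, as you note, more general: it shows that \emph{any} $\F_p$-rational $2$-isogeny can degrade the congruence $F\equiv -I\pmod{2^m}$ by at most one power of $2$, with no appeal to the direction of the isogeny or to the shape of $E(\F_p)[2^\infty]$. What the paper's approach buys in exchange is that its case analysis already isolates exactly when $F'\equiv -I\pmod{2^m}$ versus only $\pmod{2^{m-1}}$, which feeds directly into the later probability computation; your argument gives only the upper bound, deferring that dichotomy (as you correctly observe) to the subsequent analysis. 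One small point worth making explicit in your write-up is why $v_2(\det\Phi)$ is exactly $1$ rather than $0$ or $2$: from $\hat\phi_*\phi_*=2$ and $\phi_*\hat\phi_*=2$ one gets $v_2(\det\Phi)+v_2(\det\hat\Phi)=2$, and neither valuation can be $0$ since reducing mod $2$ would make $E[2]\to E'[2]$ (resp.\ its dual) an isomorphism, contradicting that each isogeny has a nontrivial $2$-torsion point in its kernel.
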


\begin{proof}
Since $F \equiv -I \pmod{2^m}$, we have $E(\F_p)[2^\infty] \simeq \Z/2\Z \times \Z/2\Z$ and $E(\F_{p^2})[2^{m+1}] \simeq \Z/2^{m+1}\Z \times \Z/2^{m+1}\Z$. Since $E$ and $E'$ are isogenous, the groups $E(\F_{p^2})$ and $E'(\F_{p^2})$ have the same size, hence their 2-Sylow subgroups have the same size.  

If the 2-Sylow subgroups over $\F_{p^2}$ are isomorphic, then $(F')^2 \equiv I \pmod{2^{m+1}}$. It is also the case that $F' \equiv I \pmod{2}$ and $F$ does not fix a cyclic subgroup of $\Z/4\Z \times \Z/4\Z$ of order 4.  A calculation with matrices shows that $F' = -I \in \GL_2(\Z/2^m\Z)$ is the unique matrix satisfying these conditions simultaneously.  Thus, $F' \equiv -I \pmod{2^m}$. Hence it is also true that $F' \equiv -I \pmod{2^{m-1}}$.  

If the 2-Sylow subgroups of $\F_{p^2}$ are not isomorphic, then because the isogeny is descending we have $E(\F_{p^2})[2^{m}] \simeq \Z/2^{m}\Z \times \Z/2^{m}\Z$. Hence $F'$ is a matrix such that $F' \equiv I \pmod{2}$, does not stabilize a cyclic subgroup of $\Z/4 \Z \times \Z/4\Z$ of order 4, and satisfies $(F')^2 \equiv I \pmod{2^m}$.  Therefore $F' \equiv -I \pmod{2^{m-1}}$ by the same reasoning.
\end{proof}

\begin{rmk}
This proposition tells us that if $\F_p(x(E[2^n])) = \F_p$ then $\F_p(x(E'[2^{n-1}])) = \F_p$.
\end{rmk}

To finish off this section we will record a technical lemma that we will need in the proof of Theorem \ref{m_proportion} below.

\begin{lem} \label{splitting_lem}
Let $E$ be an elliptic curve over a field $k$ of characteristic $p>3$ and write 
\[
E:~y^2 = x^3 + ax+b.
\]  
Suppose that $k$ contains $x(E[2^n])$.  Let $P = (\xi,\eta)$ be a point of order $2^{n+1}$ and let $\langle P \rangle$ denote the cyclic subgroup of $E[2^{n+1}]$ generated by $P$.  Then the set of $x$-coordinates of the points in $\langle P \rangle$ are contained in $k$ if and only if the the polynomial
\[
x^4 - 4\xi x^3 - 2ax^2 + (-4\xi a - 8b)x + (a^2 - 4\xi b)
\]
splits in $k$.
\end{lem}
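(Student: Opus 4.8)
The plan is to recognize the displayed quartic as the \emph{halving polynomial} of $P$ and then to reduce the rationality question to the simultaneous rationality of its four roots, which is controlled by the fact that the hypothesis forces the full $2$-torsion to be $k$-rational.

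First I would write down the duplication formula for $E\colon y^2 = x^3 + ax + b$: if $R = (u,v)$ then
\[
x([2]R) = \frac{u^4 - 2au^2 - 8bu + a^2}{4(u^3 + au + b)}.
\]
Setting $x([2]R) = \xi$ and clearing the denominator produces exactly $u^4 - 4\xi u^3 - 2au^2 + (-4\xi a - 8b)u + (a^2 - 4\xi b) = 0$. Hence the four roots of the quartic are precisely the $x$-coordinates of the points $R$ with $[2]R = P$ (equivalently $[2]R = \pm P$, which share $x$-coordinates); these points form a single coset $R_0 + E[2]$. This identification is the main computational observation.

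Next I observe that, since $n\ge 1$, the hypothesis $x(E[2^n]) \subseteq k$ contains $x(E[2])$, and because $2$-torsion points have vanishing $y$-coordinate this means $E[2] \subseteq E(k)$; write the nonzero $2$-torsion as $T_i = (e_i,0)$ with $e_i \in k$. The four roots are then $x(R_0), x(R_0 + T_1), x(R_0 + T_2), x(R_0 + T_3)$, and the translation-by-$T_i$ formula
\[
x(R_0 + T_i) = e_i + \frac{(e_i - e_j)(e_i - e_\ell)}{x(R_0) - e_i}
\]
(where $\{i,j,\ell\}=\{1,2,3\}$, and the denominator is nonzero because $R_0 \notin E[2]$) expresses every other root as a $k$-rational function of the single root $x(R_0)$. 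Consequently the quartic splits over $k$ if and only if one root $x(R_0)$ lies in $k$. To connect this with the $x$-coordinates of the cyclic group, I would note that the even multiples of $R_0$ lie in $E[2^n]$ and so already have $x$-coordinates in $k$, while for the odd multiples I would invoke the two-point $x$-only addition law: for points $A,B$, the pair $x(A+B), x(A-B)$ are the two roots of a quadratic whose coefficients are $k$-rational functions of $x(A)$ and $x(B)$ alone. Taking $B$ to be a generator of the level below, whose $x$-coordinate lies in $k$ by hypothesis, and inducting on odd multiples, rationality of the chosen generator propagates to the $x$-coordinate of every point of the cyclic group, while conversely the generator's $x$-coordinate is certainly among them.

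The hard part will be the bookkeeping in this last step. The roots of the quartic have order one higher than $P$, so the cyclic group whose $x$-coordinates are actually governed by the splitting is the one generated by a half $R_0$ of $P$; I would keep careful track of the order hypotheses so that the base case of the induction and the nonvanishing of all denominators (the conditions $x(R_0)\neq e_i$ and distinctness of $x$-coordinates at each inductive step) are met, and I would verify at each step that one root of the auxiliary quadratic is already in $k$ so that the other is forced into $k$. Once the quartic is identified as the halving polynomial and the rational $2$-torsion is in hand, these remaining points are routine.
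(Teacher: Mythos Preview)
Your approach is correct and matches the paper's in spirit: both identify the displayed quartic as the halving polynomial coming from the duplication formula, and both reduce the question to the single statement ``one preimage has $k$-rational $x$-coordinate iff they all do,'' using that $E[2]\subset E(k)$. Your explicit translation-by-$T_i$ formula is a crisper justification of that step than the paper's appeal to the general principle that two points of the same exact order in $\langle P\rangle$ differ by something in $E[2^n]$.

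You are also right to flag the order bookkeeping. As literally written, with $\xi=x(P)$ for $P$ of order $2^{n+1}$, the four roots are $x$-coordinates of halves of $P$, hence of points of order $2^{n+2}$ lying \emph{outside} $\langle P\rangle$; the ``only if'' direction then has no content. The paper's own proof in fact treats $P$ as a preimage under duplication of a point of order $2^n$ in $\langle P\rangle$---i.e.\ it silently uses $\xi=x([2]P)\in k$, which is also how the quartic is used downstream (there $\xi_{m-1}=x(R'_{2^{m-1}})$ and the roots produce $x(R'_{2^m})$). With that reading your argument and the paper's coincide: the roots are $x(P+T)$ for $T\in E[2]$, your translation formula gives ``one root in $k$ iff all are,'' and then your inductive use of the $x$-only addition law (base case $x(3P)$ paired with $x(-P)=x(P)$, then $x((j+2)P)$ paired with $x((j-2)P)$) propagates rationality to all odd multiples, while even multiples are already covered by the hypothesis $x(E[2^n])\subset k$. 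So modulo the off-by-one in the displayed $\xi$, your proof is the paper's proof with the analytic details filled in.
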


\begin{proof}
The difference between any two points in $\langle P \rangle$ is a point of order dividing $2^{n}$.  By Theorem \ref{lattes_tower}, since $k$ contains $x(E[2^n])$, it contains the $x$-coordinates of all points of order dividing $2^n$.  Thus, one point of $\langle P \rangle$ of exact order $2^{n+1}$ will have rational $x$-coordinate if and only if they all do.  Therefore, all the points of $\langle P \rangle$ will have rational $x$-coordinates if and only if the points of exact order $2^{n+1}$ do.  Such a point $P$ is the preimage under the duplication map of a point of order $2^n$ in $\langle P \rangle$, hence by \cite[III.2.3(d)]{silverman} the $x$-coordinate of $P$ is $k$-rational if and only if the quartic polynomial (whose roots are the $x$-coordinates of these points of order $2^{m+1}$)
\[
x^4 - 4\xi x^3 - 2ax^2 + (-4\xi a - 8b)x + (a^2 - 4\xi b)
\]
has all its roots defined over $k$.  
\end{proof}

\subsection{The Proportion of Anomalous Primes} 

Fix $m \ge 2$.  The key step in proving Theorem \ref{mainthm2} is the following.

\begin{thm} \label{m_proportion}
Suppose $E$ and $E'$ are rationally 2-isogenous elliptic curves defined over $\Q$ such that $G$ and $G'$ each have index 3 in $\GL_2(\Z_2)$.  Let $m \geq 2$.  Then the proportion of anomalous primes of defect $(m+1,m)$ is 
\[
\frac{1}{2} \cdot \frac{1}{|G(2^m)|} = \frac{1}{2^{4m+2}}.
\]
\end{thm}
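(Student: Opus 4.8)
The plan is to realize the statement as a Chebotarev computation inside the single finite group $G(2^{m+1})$. By Section~\ref{rep_setup} together with Theorem~\ref{establish_defect}, a prime $p$ is anomalous of defect $(m+1,m)$ exactly when, after conjugation, its Frobenius matrix has the shape $F=-I+2^mM$ with $M\not\equiv 0\pmod 2$ and the Frobenius matrix $F'$ of $E'$ has the shape $F'=-I+2^{m-1}M'$ with $M'\not\equiv 0\pmod 2$; in particular the entire condition is detected modulo $2^{m+1}$. Since $F'$ is determined by $F$ through the degree-$2$ isogeny (the combined mod-$2^{m+1}$ image projects isomorphically to $G(2^{m+1})$), the defect-$(m+1,m)$ locus is an intrinsic, conjugation-invariant subset $S\subseteq G(2^{m+1})$, and the sought proportion is $|S|/|G(2^{m+1})|$ by the Chebotarev density theorem applied to $\Q(E[2^{m+1}])/\Q$.

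First I would locate $S$ over the central element $-I$. Because each curve has a rational $2$-torsion point, $G(2)$ is the order-$2$ Borel and $G$ is the full preimage of $G(2)$; hence $[\GL_2(\Z/2^m\Z):G(2^m)]=3$ for all $m$, $-I\in G(2^m)$, and the class of $-I$ in $G(2^m)$ is a singleton. Every element of $S$ reduces to $-I$ in $G(2^m)$ (this is the condition $F\equiv -I\pmod{2^m}$, equivalently $\F_p(x(E[2^m]))=\F_p$ by Lemma~\ref{adelman_lem}), so $S$ sits inside the fiber of $-I$, a coset of $\ker(G(2^{m+1})\to G(2^m))$ of size $|G(2^{m+1})|/|G(2^m)|=2^4$. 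The crux is then to show that exactly half of these $2^4$ lifts lie in $S$, which is precisely the content of Theorem~\ref{equithm}: among the primes with $F\equiv -I\pmod{2^m}$, half have $F'\equiv -I\pmod{2^m}$ (isomorphic $2$-Sylow subgroups over $\F_{p^2}$, not anomalous) and half have $F'\equiv -I\pmod{2^{m-1}}$ but not modulo $2^m$ (anomalous of defect $(m+1,m)$). Thus $|S|=2^3$ and the proportion is $2^3/|G(2^{m+1})|=\tfrac12\cdot\tfrac{1}{|G(2^m)|}$.

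To finish, I would make $|G(2^m)|$ explicit. From $|\GL_2(\Z/2^m\Z)|=2^{4m}(1-\tfrac12)(1-\tfrac14)=3\cdot 2^{4m-3}$ and the index $[\GL_2(\Z/2^m\Z):G(2^m)]=3$ I get $|G(2^m)|=2^{4m-3}$, so the proportion equals $\tfrac12\cdot 2^{-(4m-3)}=2^{-(4m-2)}$, giving the asserted value $\tfrac12\cdot\tfrac{1}{|G(2^m)|}$.

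The one substantive step is the factor $\tfrac12$, i.e.\ that $S$ meets the fiber over $-I$ in exactly half of its $2^4$ points. I would obtain this from Theorem~\ref{equithm}, whose proof turns the relation $F'=\phi_*F\phi_*^{-1}$ into a divisibility statement on $F'+I$ and then uses Lemma~\ref{splitting_lem}: the passage from $F'\equiv -I\pmod{2^{m-1}}$ (automatic, by Proposition~\ref{F'form}) down to exact level $m-1$ is governed by whether the quartic of Lemma~\ref{splitting_lem} splits over $\F_p$, equivalently whether one further $x$-coordinate becomes rational, and an equidistribution argument shows this happens for exactly half of the relevant primes. The only remaining care is to confirm that this ``anomalous half'' is genuinely conjugation-stable inside $G(2^{m+1})$, so that the Chebotarev count $|S|/|G(2^{m+1})|$ is valid verbatim.
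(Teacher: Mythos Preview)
Your framework matches the paper's: reduce to a Chebotarev count, locate the relevant Frobenius classes in the fiber over $-I\in G(2^m)$, and show that exactly half of that fiber is anomalous.  The logical gap is in how you obtain the factor $\tfrac12$.  You defer it to Theorem~\ref{equithm}, but in the paper Theorem~\ref{equithm} is not proved independently of Theorem~\ref{m_proportion}; the two statements are essentially the same result, and the content of both is established \emph{inside} the proof of Theorem~\ref{m_proportion}.  So invoking \ref{equithm} here is circular.  Your one-line sketch (``an equidistribution argument shows this happens for exactly half of the relevant primes'') is where the actual work lies.  The paper's argument is: (i) compute that $F'$ acts on $E'[2^m]$ as $\left(\begin{smallmatrix}-1&*\\0&-1\end{smallmatrix}\right)$, so the question reduces to whether the quartic of Lemma~\ref{splitting_lem} splits over $\F_p$; (ii) show this quartic factors over $\F_p$ as two conjugate quadratics; (iii) invoke Theorem~\ref{nathan_thm} (existence of anomalous primes of every defect $(m+1,m)$) to rule out the possibility that these quadratics always split, forcing the splitting to occur for exactly half the primes in question.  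Step (iii) via \ref{nathan_thm} is the ingredient your sketch omits.

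Two smaller points.  First, your parenthetical claim that ``the combined mod-$2^{m+1}$ image projects isomorphically to $G(2^{m+1})$'' is false in the index-$3$ setting: as noted in Section~\ref{preview_2}, when both images have index $3$ one has $\Q(E[2])\neq\Q(E'[2])$, so already $\Q(E'[2])\not\subseteq\Q(E[2^{m+1}])$ can fail at the bottom.  What you actually need---and what is true---is the weaker fact that $F'\pmod{2^m}$ is determined by $F\pmod{2^{m+1}}$; this follows from writing the isogeny on Tate modules in a basis adapted to $\ker\phi$.  Second, your arithmetic $|G(2^m)|=2^{4m-3}$ and proportion $2^{-(4m-2)}$ is correct; the exponent $4m+2$ in the displayed formula of the theorem statement is a typo in the paper (the corollary summing to $1/30$ and the numerical data in the appendix confirm $2^{-(4m-2)}$).
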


\noindent We break this proof into two steps, starting with a Lemma.

\begin{lem} \label{lemA}
Suppose $p$ is a prime for which $F \equiv  -I + 2^mM \pmod{2^{m+1}}$ with $M \not \equiv 0 \pmod{2}$.  Then $p$ is anomalous of defect $(m+1,m)$ if and only if $\F_p(x(E'[2^m])) \ne  \F_p$.
\end{lem}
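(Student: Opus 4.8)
The plan is to establish the biconditional by tracking what the hypothesis $F \equiv -I + 2^mM \pmod{2^{m+1}}$ with $M \not\equiv 0 \pmod 2$ tells us about the torsion on $E$ over $\F_{p^2}$, and then using the machinery of Proposition \ref{F'form} and Lemma \ref{2_torsion_frob} to translate the anomalous condition into the field-of-definition statement $\F_p(x(E'[2^m])) \ne \F_p$. First I would observe that the hypothesis says precisely $F \equiv -I \pmod{2^m}$ but $F \not\equiv -I \pmod{2^{m+1}}$. By Lemma \ref{adelman_lem} (applied with $n=m$), the condition $F \equiv -I \pmod{2^m}$ is equivalent to $\F_p(x(E[2^m])) = \F_p$, and moreover tells us $E(\F_{p^2})$ contains full $2^m$-torsion; the sharper condition $F^2 \equiv I \pmod{2^{m+1}}$ but $F^2 \not\equiv I \pmod{2^{m+2}}$ (which follows from $F = -I+2^mM$ with $M\not\equiv 0$, since squaring gives $F^2 = I - 2^{m+1}M + 2^{2m}M^2$) shows via Lemma \ref{2_torsion_frob} that $E(\F_{p^2})$ has full $2^{m+1}$-torsion but not full $2^{m+2}$-torsion. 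So on the $E$ side the defect is already pinned to $m+1$.

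Next I would handle the $E'$ side. Since the isogeny is descending and $E(\F_p)[2^\infty] \simeq E'(\F_p)[2^\infty] \simeq \Z/2\Z \times \Z/2\Z$ (Lemma \ref{22lem}), Proposition \ref{F'form} guarantees $F' \equiv -I \pmod{2^{m-1}}$, so $E'(\F_{p^2})$ has at least full $2^{m-1}$-torsion automatically. By Theorem \ref{establish_defect} the only possible defects for an anomalous prime with these curves adjacent on the volcano are $(m+1,m)$ or $(m,m+1)$; since we have shown $E$ carries defect-exponent $m+1$ and the isogeny is descending, the prime is anomalous precisely when $E'$ has full $2^m$-torsion but \emph{not} full $2^{m+1}$-torsion, i.e.\ when $F' \equiv -I \pmod{2^m}$ but $F' \not\equiv -I \pmod{2^{m+1}}$. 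The content of the lemma is thus that this $E'$-torsion condition is detected by $\F_p(x(E'[2^m])) \ne \F_p$.

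The key translation is the following dichotomy. If $p$ is \emph{not} anomalous, then by the proof of Proposition \ref{F'form} the 2-Sylow subgroups over $\F_{p^2}$ are isomorphic and $F' \equiv -I \pmod{2^m}$; by Lemma \ref{adelman_lem} this forces $\F_p(x(E'[2^m])) = \F_p$. Conversely, if $p$ is anomalous of defect $(m+1,m)$, then $E'(\F_{p^2})[2^\infty] \simeq \Z/2^{a+1}\Z \times \Z/2^m\Z$ with the smaller exponent equal to $m$, which means $F'$ is \emph{not} $\equiv -I \pmod{2^m}$ — the full $2^m$-torsion of $E'$ does not split rationally — so again by Lemma \ref{adelman_lem} we get $\F_p(x(E'[2^m])) \ne \F_p$. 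Thus the field condition is exactly the negation of the non-anomalous case, giving the biconditional. The main obstacle I anticipate is the careful bookkeeping in the contrapositive direction: one must rule out the possibility that $\F_p(x(E'[2^m])) \ne \F_p$ while $p$ fails to be anomalous of defect $(m+1,m)$ specifically. This is where I would lean hardest on Lemma \ref{adelman_lem}'s equivalence between $x$-coordinate splitting and the $F' \equiv -I$ congruence, together with Theorem \ref{establish_defect} restricting the defect to exactly two possibilities, so that ``not full $2^m$-torsion rationally'' leaves no room for any outcome other than the descending defect $(m+1,m)$.
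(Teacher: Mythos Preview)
Your approach is essentially the paper's: translate ``anomalous of defect $(m+1,m)$'' into the pair of matrix conditions $F \equiv -I + 2^m M$ and $F' \equiv -I + 2^{m-1} M'$ with $M,M' \not\equiv 0 \pmod 2$, note that Proposition~\ref{F'form} already gives $F' \equiv -I \pmod{2^{m-1}}$ from the hypothesis on $F$, and then use Lemma~\ref{adelman_lem} to equate $M' \not\equiv 0 \pmod 2$ (i.e.\ $F' \not\equiv -I \pmod{2^m}$) with $\F_p(x(E'[2^m])) \ne \F_p$. Your ``key translation'' paragraph carries this out correctly.

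However, your middle paragraph contains an indexing error that makes the write-up self-contradictory. You assert that $p$ is anomalous ``precisely when $E'$ has full $2^m$-torsion but not full $2^{m+1}$-torsion, i.e.\ when $F' \equiv -I \pmod{2^m}$ but $F' \not\equiv -I \pmod{2^{m+1}}$.'' The ``i.e.'' is off by one: if $F' \equiv -I \pmod{2^m}$ then $(F')^2 = I - 2^{m+1}M' + 2^{2m}(M')^2 \equiv I \pmod{2^{m+1}}$, so $E'(\F_{p^2})$ has full $2^{m+1}$-torsion, which is the \emph{non}-anomalous case. The correct translation (as worked out in Section~\ref{rep_setup}) is that $E'(\F_{p^2})$ has full $2^m$- but not full $2^{m+1}$-torsion if and only if $F' \equiv -I \pmod{2^{m-1}}$ but $F' \not\equiv -I \pmod{2^m}$. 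Your key-translation paragraph then (correctly) says anomalous $\Leftrightarrow F' \not\equiv -I \pmod{2^m}$, directly contradicting the middle paragraph. Fix the exponent there and the argument is clean; also note that your appeal to Lemma~\ref{22lem} for $E'(\F_p)[2^\infty] \simeq \Z/2\Z \times \Z/2\Z$ is circular, since that lemma assumes $(E,E')$ is already anomalous---this hypothesis on $E'$ is really an ambient assumption carried over from the setup of Section~\ref{rep_setup}, as in the paper's own proof.
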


\begin{proof}
This follows from  the results of the previous section. We have that $p$ is anomalous of defect $(m+1,m)$ if and only if $E(\F_p)[2^\infty] \simeq E'(\F_p)[2^{\infty}] \simeq \Z/2\Z \times \Z/2\Z$, $E(\F_{p^2})[2^\infty] \simeq \Z/2^a\Z \times \Z/2^{m+1}\Z$, and $E'(\F_{p^2}) \simeq \Z/2^{a+1}\Z \times \Z/2^{m}\Z$.  By our matrix calculations, this is true if and only if $F \equiv -I + 2^mM \pmod{2^{m+1}}$ and $F' \equiv -I + 2^{m-1}M'\pmod{2^{m}}$ with neither $M$ nor $M' \equiv 0 \pmod{2}$.  By Proposition \ref{F'form}, $F \equiv -I + 2^mM\pmod{2^{m+1}}$ implies $F' \equiv -I + 2^{m-1}M'\pmod{2^{m}}$.  By the Galois theory of torsion point fields from Lemma \ref{adelman_lem}, $M' \not \equiv 0 \pmod{2}$ if and only if $\F_p(x(E'[2^m])) \ne  \F_p$.
\end{proof}

We now make some global choices for $E$ that we use in the next proof.  Fix a basis $P,Q$ for $T_2E$ and write $P_{2^k}$, $Q_{2^k}$ for the reductions modulo $2^k$ of $P$ and $Q$, respectively.  Since $E$ and $E'$ are rationally 2-isogenous, there exists a 2-isogeny $\varphi: E \to E'$ defined over $\Q$.  Since $\varphi$ is defined over $\Q$, we may write $E' = E/\langle S \rangle$ for some $\Q$-rational 2-torsion point $S$ of $E$.  We choose our basis so that $S = P_2$. 

Let $P' = \varphi(P)$ and $Q' = \varphi(Q)$ with $P'_{2^k}$ and $Q'_{2^k}$ defined similarly.  We have that $Q'_{2^k}  = Q_{2^k} + \langle P_2 \rangle$ is a $2^k$-torsion point of $E'$, but $P'_{2^k}$ is not necessarily independent of $Q'_{2^k}$. We fix a basis $Q',R'$ for $T_2E'$ so that for all $m,\ Q'_{2^m}$ and $R'_{2^m}$ form a basis for $E'[2^m]$. 

By applying V\'elu's explicit formulas, we see that there exists a change of coordinates such that $E$ and $E'$ are given by the explicit Weierstrass equations
\begin{align*}
E:~y^2 &= (x + a_2)(x^2 - 4a_4) \\
E':~y^2 &= x(x^2+a_2x + a_4),
\end{align*}
where $P_2 = (-a_2,0)$ and $P_2' = (0,0)$.  Write $R'_{2^{m-1}} = (\xi_{m-1},\eta_{m-1})$ with $\xi_{m-1},\eta_{m-1} \in \overline{\Q}$.  Then the $x$-coordinate $\xi_m$ of $R'_{2^m}$ is given by one of the roots the quartic 
\begin{align} \label{quartic}
x^4 - 4\xi_{m-1} x^3 + (-4\xi_{m-1} a_2 + 6a_4)x^2 + (4a_4a_2 - 8a_4)x + (-4a_4a_2 + (a_4^2 - 4\xi_{m-1} a_4)).
\end{align}

Next we show the existence of anomalous primes of defect $(m+1,m)$ for all $m \geq 2$.

\begin{thm} \label{nathan_thm}
Let $E$ and $E'$ be rationally 2-isogenous elliptic curves over $\Q$ and suppose that $G$ and $G'$ each have index 3 in $\GL_{2}(\Z_2)$.  Then for all $m \geq 2$ there exist anomalous primes of defect $(m+1,m)$. 
\end{thm}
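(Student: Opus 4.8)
The plan is to translate ``anomalous of defect $(m+1,m)$'' into a condition on the Frobenius class inside $G(2^{m+1}) = \im\overline{\rho}_{E,2^{m+1}}$, and then realize that class for a positive density of primes via the Chebotarev density theorem. By Lemma \ref{lemA}, a prime $p$ with $F \equiv -I + 2^m M \pmod{2^{m+1}}$ and $M \not\equiv 0 \pmod 2$ is anomalous of defect $(m+1,m)$ exactly when $\F_p(x(E'[2^m])) \ne \F_p$, which by Lemma \ref{adelman_lem} means $F' \not\equiv -I \pmod{2^m}$; recall that $F' \equiv -I \pmod{2^{m-1}}$ is automatic from Proposition \ref{F'form}. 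So it suffices to exhibit a single Frobenius class with $F \equiv -I + 2^m M$ ($M \not\equiv 0 \pmod 2$) and $F' \not\equiv -I \pmod{2^m}$, and then apply Chebotarev over all but the finitely many primes of bad reduction.

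First I would record the relation between $F$ and $F'$ coming from the isogeny. Since $\varphi\colon E \to E'$ is Galois-equivariant, in the bases $P,Q$ of $T_2 E$ and $Q',R'$ of $T_2 E'$ fixed above its matrix $\Phi$ satisfies $F' = \Phi F \Phi^{-1}$, where $\Phi = \left(\begin{smallmatrix} \alpha & 1 \\ 2 & 0 \end{smallmatrix}\right)$ has columns $\varphi(P)$ and $\varphi(Q)=Q'$; here $\alpha$ is even because $P \bmod 2$ generates $\ker\varphi$, so $\varphi(P) \in 2 T_2 E'$. In particular $\det \Phi$ is $2$ times a unit, so conjugation by $\Phi$ carries $-I + 2^m M$ to $-I + 2^{m-1} N$ with $N = \Phi M \Phi^{-1} \in \Mat_2(\Z_2)$. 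This both reproves Proposition \ref{F'form} and shows that $F' \not\equiv -I \pmod{2^m}$ if and only if $N \not\equiv 0 \pmod 2$.

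Next I would use the maximality hypothesis $[\GL_2(\Z_2):G]=3$. Because $G$ is the (unique up to conjugacy) index-$3$ subgroup, namely the stabilizer of the rational $2$-torsion line $\langle P \rangle \bmod 2$, its reduction $G(2^{m+1})$ is exactly the full stabilizer of that line in $\GL_2(\Z/2^{m+1}\Z)$, hence contains every matrix $\equiv I \pmod 2$. Taking $M_0 = \left(\begin{smallmatrix} 1 & 0 \\ 0 & 0 \end{smallmatrix}\right)$ gives $F_0 := -I + 2^m M_0 \in G(2^{m+1})$ with $M_0 \not\equiv 0 \pmod 2$, and a direct computation of $N_0 = \Phi M_0 \Phi^{-1}$ shows its lower-right entry is $\equiv 1 \pmod 2$, so $N_0 \not\equiv 0 \pmod 2$ and $F_0' = \Phi F_0 \Phi^{-1} \not\equiv -I \pmod{2^m}$. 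All three conditions are invariant under conjugation because $-I$ is central; hence every prime whose Frobenius class maps to that of $F_0$ under $\Gal(\Q(E[2^m{+}1])/\Q) \simeq G(2^{m+1})$ is anomalous of defect $(m+1,m)$, and Chebotarev supplies a positive density of such primes.

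The main obstacle is the second and third steps: guaranteeing that the constraints on $F$ and on $F'$ can hold simultaneously. A priori one worries that $F \equiv -I \pmod{2^m}$ forces $F' \equiv -I \pmod{2^m}$ and kills every candidate; the crucial point is that the isogeny matrix $\Phi$ is \emph{not} an integral unit, so conjugation by it mixes the top-order term $2^m M$ down to order $2^{m-1}$ and can leave a nonzero residue modulo $2$. Verifying that $G(2^{m+1})$ is genuinely large enough to contain the chosen $F_0$ is exactly where the index-$3$ hypothesis is indispensable. As a concrete alternative to this representation-theoretic bookkeeping, the condition $F' \not\equiv -I \pmod{2^m}$ is, by Lemma \ref{splitting_lem} applied to $E'$, equivalent to the quartic \eqref{quartic} failing to split over $\F_p$; one can therefore also establish non-emptiness by exhibiting an explicit Frobenius that permutes the roots of \eqref{quartic} nontrivially, which is the concrete form in which the explicit Weierstrass models were set up above.
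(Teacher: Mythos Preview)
Your overall strategy---reduce to exhibiting a Frobenius class with $F\equiv -I+2^m M_0$ and $F'\not\equiv -I\pmod{2^m}$, then invoke Chebotarev---is sound, and the conjugation relation $F'=\Phi F\Phi^{-1}$ is the right tool. But the step ``conjugation by $\Phi$ carries $-I+2^m M$ to $-I+2^{m-1}N$ with $N=\Phi M\Phi^{-1}$'' is a factor-of-$2$ slip: one simply has $F'=-I+2^m\Phi M\Phi^{-1}$, and the question is whether $\Phi M\Phi^{-1}$ lies in $\Mat_2(\Z_2)$ or only in $\tfrac{1}{2}\Mat_2(\Z_2)$. With $\Phi=\left(\begin{smallmatrix}\alpha&1\\2b&0\end{smallmatrix}\right)$ ($\alpha$ even, $b\in\Z_2^\times$), a direct computation gives
\[
\Phi\begin{pmatrix}1&0\\0&0\end{pmatrix}\Phi^{-1}=\begin{pmatrix}0&a/b\\0&1\end{pmatrix}\in\Mat_2(\Z_2),
\]
so for your choice $M_0=\left(\begin{smallmatrix}1&0\\0&0\end{smallmatrix}\right)$ one gets $F_0'=-I+2^m\Phi M_0\Phi^{-1}\equiv -I\pmod{2^m}$ and $p$ is \emph{not} anomalous. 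More generally, writing $M=\left(\begin{smallmatrix}a&b\\c&d\end{smallmatrix}\right)$ and expanding $\Phi M\,\mathrm{adj}(\Phi)$, the only entry that can be odd is the upper-right one, and it is odd precisely when $c$ is odd. Thus $F'\not\equiv -I\pmod{2^m}$ forces the lower-left entry of $M$ to be a unit; your $M_0$ has $c=0$.

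The fix is simply to choose $M_0$ with odd lower-left entry, e.g.\ $\left(\begin{smallmatrix}0&0\\1&0\end{smallmatrix}\right)$ or the paper's $\left(\begin{smallmatrix}1&1\\1&0\end{smallmatrix}\right)$, and then your argument goes through. The paper, by contrast, avoids computing $\Phi$ altogether: it shows that for $M=\left(\begin{smallmatrix}1&1\\1&0\end{smallmatrix}\right)$ the quantity $(x-w)^2+4yz\equiv 5\pmod 8$, so the volcano $V_p$ has height exactly $m$ with a single crater vertex (the case $\disc\mathcal{O}_0\equiv 5\pmod 8$), forcing $E$ onto the crater and every $2$-isogeny from $E$ to be descending; over $\F_{p^2}$ the height grows by one and the levels have pairwise distinct group structures, so $p$ is anomalous of defect $(m{+}1,m)$. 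Your conjugation approach is more direct once the algebra is set straight, while the paper's volcano argument bypasses any knowledge of the isogeny matrix.
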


\begin{proof}
Fix $m \geq 2$. By the assumption on the size of $G$ and $G'$ and the Chebatorev Density Theorem, there exist infinitely many primes p for which $F \pmod{2^{m+1}}$  is in the conjugacy class of 
\[
-I + 2^m \begin{pmatrix} 1 & 1 \\ 1 & 0 \end{pmatrix}.
\]
Let $V_p$ be the isogeny volcano which contains $E$ and $E'$ as adjacent vertices and let $V_{p^2}$ be the corresponding volcano over $\F_{p^2}$.  By our work in Section \ref{volcanoes} we know that $E$ is on level at least $m$ of $V_p$.

\medskip

\noindent \textbf{Claim.} 
The height of $V_p$ is $m$ and $\disc \mathcal{O}_0 \equiv 5 \pmod{8}$.

\medskip

We now prove the claim.
Write $F = -I + 2^m \left(\begin{smallmatrix} x&y \\ z & w \end{smallmatrix} \right) \in \GL_2(\Z_2)$.  
As in the end of Section~\ref{volcanoes}, 
\begin{align*}
t^2 - 4p &\equiv  2^{2m}((x-w)^2+4yz) \pmod{2^{2m+1}}.
\end{align*}
Therefore $v_2(t^2-4p) = 2m + v_2((x-w)^2+4yz)$.

We have $\sqf(t^2-4p) \equiv \sqf( (x-w)^2+4yz)) \pmod{8}$.  If $x-w$ is odd, then $v_2((x-w)^2+4yz) = 0$ and $v_2(t^2-4p) = 2m$.  In this case it is also true that $\sqf( (x-w)^2+4yz)) \equiv 5 \pmod{8}$ if and only if $yz$ is odd.  If this holds, $\disc \mathcal{O}_0 \equiv 5 \pmod{8}$.  Consequently, since $\disc \mathcal{O}_0 \equiv 1 \pmod{4}$, equation \eqref{eqn:volcano_height} shows that the height of $V_p$ is $v_2(t^2-4p)/2 = m$.

Now set $\left(\begin{smallmatrix} x&y \\z & w \end{smallmatrix} \right) = \left(\begin{smallmatrix} 1&1 \\ 1 & 0 \end{smallmatrix} \right)$ as above.  We saw in Section \ref{volcanoes} that for a volcano $V_p$ in which $\disc \mathcal{O}_0 \equiv 5 \pmod{8}$, there is a unique vertex on the crater.  Since $E$ lies on level at least $m$ of $V_p$ and $h(V_p)= m$, we see that $E$ is the unique vertex on the crater of $V_p$.  

\medskip

Returning to the proof of the theorem, we conclude from the Claim that $V_{p^2}$ has height $m+1$ and the group structure on the crater is $\Z/2^a\Z \times \Z/2^{m+1}\Z$.  Since a vertex on the floor of $V_{p^2}$ has a cyclic group of rational points, it must be the case that the curves on each level of $V_{p^2}$ have different group structures.
So in particular, $E'(\F_{p^2})[2^\infty] = \Z/2^{a-1}\Z \times \Z/2^m\Z$.
This means that $p$ must have defect $(m+1,m)$.
\end{proof}

We now finish the proof of Theorem \ref{m_proportion}.

\begin{proof}[Proof of Theorem \ref{m_proportion}]
By Lemma \ref{lemA} a prime $p$ is anomalous of defect $(m+1,m)$ if and only if $F = -I + 2^mM$ and $F' = -I + 2^{m-1}M'$, with neither $M$ nor $M' \equiv 0 \pmod{2}$.   We will interpret our proportion $1/{2^{4m+2}}$ as a conditional probability.  Suppose $p$ is a prime such that $F = -I + 2^mM$.  By the Chebotarev Density Theorem, the proportion of such primes is $1/|G(2^m)| = 1/2^{4m+1}$.  By Proposition \ref{F'form}, we have that $F' \equiv -I \pmod{2^{m-1}}$ at these primes as well.  We will show that for a proportion of 1/2 of these primes we have $F' \not\equiv -I \pmod{2^m}$ and so $p$ is anomalous with defect $(m+1,m)$.  Now we compute in the basis we have set above:
\begin{align*}
F(P_{2^m}) &= -P_{2^m} \\
F(Q_{2^m}) &= -Q_{2^m}
\end{align*}
so that
\[
F'(Q'_{2^m}) = F'(Q_{2^m} + \langle P_2 \rangle) = F(Q_{2^m}) + F(\langle P_2 \rangle) =  -Q_{2^m} + \langle P_2 \rangle = -Q'_{2^m} 
\]
because $P_2$ is defined over $\Q$.   

Therefore, we have determined that $F'$ acts on $E'[2^m]$ via 
\[
F' \equiv \begin{pmatrix}
-1 & * \\ 0 & *
\end{pmatrix} \pmod{2^m}.
\]
But since $p \equiv \det(F) \pmod{2^m}$ and $\det(F) \equiv 1 \pmod{2^m}$, we must additionally have $F' \equiv \left(\begin{smallmatrix} -1 & * \\ 0 & -1 \end{smallmatrix} \right) \pmod{2^m}$.  Therefore, Proposition \ref{adelman_prop} shows that in this setting $p$ is not anomalous of defect $(m+1,m)$ if and only all the $x$-coordinates of the $2^m$-torsion points of $E'$ are defined over $\F_p$.  To determine when this happens, we examine the quartic (\ref{quartic}).

By Propositions \ref{adelman_prop} and \ref{F'form}, we know that the the $x$-coordinates of the $2^{m-1}$-torsion points on $E'$ are $\F_p$-rational.  Any two choices of $R'_{2^m}$ differ by a $2^{m-1}$-torsion point. Therefore, the $x$-coordinate of any choice of $R'_{2^m}$ is defined over $\F_p$ if and only if  the $x$-coordinate of one choice of of $R'_{2^m}$ is defined over $\F_p$.

With notation as above, consider the quartic polynomial given in \eqref{quartic}.  The roots of this polynomial give the $x$-coordinates of the $2^m$-torsion points of all preimages of $R_{2^{m-1}}$.

Since the $x$-coordinates of all of the $2^m$-torsion points of $E$ are defined over $\F_{p^2}$, the quartic polynomial in \eqref{quartic} must factor over $\F_p$ as a product of irreducible polynomials each of degree at most $2$.  In particular, it is reducible over $\F_p$.

We have shown that if the quartic (\ref{quartic}) has one root defined over $\F_p$, then it splits completely into linear factors over $\F_p$.  Therefore, since (\ref{quartic}) is reducible over $\F_p$, it factors as a product of two conjugate quadratic polynomials over $\F_p$.  If it were the case that these polynomials split into linear factors over $\F_p$ for every $p$, there would not exist any primes of defect $(m+1,m)$, contradicting Theorem \ref{nathan_thm}. Thus they must be irreducible for 1/2 of the primes considered in this proof and and split for the complementary primes, and so the proportion of primes of defect $(m+1,m)$ is $(1/2) \cdot (1/2^{4m+1}) = 1/2^{4m+2}$, as claimed.
\end{proof}

We now complete the proof of Theorem \ref{mainthm2} as a corollary.

\begin{cor}
With all notation as above, we have $\mathcal{P} = 1/30$.
\end{cor}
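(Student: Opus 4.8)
The plan is to recover $\mathcal{P}$ as the sum of the densities of the individual defect classes. By Theorem \ref{establish_defect}, every anomalous prime has a defect equal to $(m+1,m)$ or $(m,m+1)$ for a unique integer $m\ge 2$, and a prime cannot simultaneously have two distinct defects; thus the anomalous primes are partitioned into the classes $\mathcal{A}_{m+1,m}$ and $\mathcal{A}_{m,m+1}$, $m\ge 2$. Theorem \ref{m_proportion} gives the density of $\mathcal{A}_{m+1,m}$ as $\tfrac12\cdot\tfrac{1}{|G(2^m)|}$, and running the same argument with the roles of $E$ and $E'$ exchanged (so that the reduced isogeny $E'\to E$ is descending), as indicated in the remark after Theorem \ref{equithm}, gives the density of $\mathcal{A}_{m,m+1}$ as $\tfrac12\cdot\tfrac{1}{|G'(2^m)|}$.

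Next I would evaluate the group orders. Because $G$ and $G'$ each have index $3$ in $\GL_2(\Z_2)$ and the kernel of reduction modulo $2$ is pro-$2$ (hence contained in any index-$3$ subgroup), the reductions $G(2^m)$ and $G'(2^m)$ have index $3$ in $\GL_2(\Z/2^m\Z)$ for every $m$. Since $|\GL_2(\Z/2^m\Z)| = 3\cdot 2^{4m-3}$, we get $|G(2^m)| = |G'(2^m)| = 2^{4m-3}$, so each defect class at level $m$ has density $\tfrac12\cdot 2^{-(4m-3)} = 2^{-(4m-2)}$.

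Summing over all defect classes then yields
\[
\mathcal{P} \;=\; \sum_{m=2}^{\infty}\Bigl(2^{-(4m-2)} + 2^{-(4m-2)}\Bigr) \;=\; \sum_{m=2}^{\infty} 2^{-(4m-3)} \;=\; 8\sum_{m=2}^{\infty}\frac{1}{16^{m}} \;=\; 8\cdot\frac{1/16^{2}}{1-1/16} \;=\; \frac{1}{30}.
\]
The only point requiring care — and the main (if modest) obstacle — is the passage from the finitely many Chebotarev statements to the equality for the natural density $\mathcal{P}=\lim_{X\to\infty}\mathcal{P}(X)$: each finite truncation of the sum is an honest Chebotarev density computed in the finite quotient $G(2^m)$, and one must verify that the tail densities $\sum_{m>M}$ are uniformly negligible so that the density of the full (infinite, disjoint) union exists and equals the sum. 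The geometric decay of the level-$m$ densities makes this immediate, but it is precisely the step that upgrades the level-by-level probabilities of the previous subsection into the claimed equality $\mathcal{P}=1/30$.
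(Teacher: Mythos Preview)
Your argument is correct and follows exactly the paper's approach: partition the anomalous primes by defect via Theorem \ref{establish_defect}, apply Theorem \ref{m_proportion} (and its dual-isogeny analogue) at each level $m\ge 2$, and sum the resulting geometric series. Your computation $|G(2^m)|=2^{4m-3}$ (hence level-$m$ density $2^{-(4m-2)}$) is in fact the right one---the paper's stated exponent $2^{-(4m+2)}$ is a slip, as one sees from the intermediate step $\tfrac{1}{32}\sum_k 16^{-k}$ in the displayed sum, which matches your numbers rather than theirs.
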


\begin{proof}
For all $m \geq 2$, Theorem \ref{m_proportion} shows that the proportion of anomalous primes of defect $(m+1,m)$ is $2^{-4m-2}$.  By symmetry via the dual isogeny, the proportion of anomalous primes of defect $(m,m+1)$ is $2^{-4m-2}$ as well.  Therefore, the proportion of anomalous primes $\mathcal{P}$ is given by the geometric series
\[
\mathcal{P} = 2\sum_{m =2}^{\infty} \frac{1}{2^{4m + 2}} = \frac{1}{32} \sum_{k=0}^{\infty} \frac{1}{16^k} = \frac{1}{30}.
\]
\end{proof}

\section{The Distribution of Anomalous Primes by Volcano Height}

In this section we take a different point of view and explore how the defect of an anomalous prime corresponds to the height and shape of the associated volcano.  These results are motivated by experiments with the pair $(E,E'$) of rationally 2-isogenous elliptic curves over $\Q$ where where $E$ has \texttt{LMFDB} label \href{https://www.lmfdb.org/EllipticCurve/Q/69/a/2}{{\tt 69a2}}  and $E'$ has label \href{https://www.lmfdb.org/EllipticCurve/Q/69/a/1}{{\tt 69a1}}.  We computed the anomalous primes $p$ up to $2\cdot 10^7$ and divided them up by defect, the height of the associated volcano $h(V_p)$, and $\disc \mathcal{O}_0 \pmod{8}$, which determines the shape of the crater of $V_p$.  We include the data for anomalous primes of defect $(3,2)$ and for anomalous primes of defect $(4,3)$ in Appendix \ref{calculations}.  

Let $S_m$ be the set of anomalous primes of defect $(m+1,m)$.  For $i \in \{0,1,4,5\}$ and a positive integer $H \ge m$, let $S_m(i,H)$ be the subset of $p\in S_m$ for which $\disc \mathcal{O}_0 \equiv i \pmod{8}$ and $h(V_p) = H$.  Let $S'_m(i,H)$ denote the proportion of primes in $S_m$ that lie in $S_m(i,H)$.  The data we have collected strongly suggest the following results.  
\begin{conj}\label{volcano_conj}
Let $E$ and $E'$ be rationally 2-isogenous elliptic curves over $\Q$ such that $[\GL_2(\Z_2): \im \rho_{E,2}] = [\GL_2(\Z_2):\im \rho_{E',2}] = 3$.  For any $H \ge m$, we have
\[
S_m'(1,H) = S_m'(5,H) = 4^{-(H-(m-1))}
\]
and 
\[
S_m'(0,H) = S_m'(4,H) =  \frac{1}{2}\cdot 4^{-(H-(m-1))}.
\]
\end{conj}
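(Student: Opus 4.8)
The plan is to compute the conditional distribution of the pair $(\disc \mathcal{O}_0 \bmod 8,\ h(V_p))$ over $S_m$ directly, combining the matrix description of Frobenius with the Chebotarev equidistribution already used in Section \ref{Q}. Since $G = \im \rho_{E,2}$ has index $3$ and contains every matrix congruent to $-I \pmod{2^m}$, Chebotarev makes $M = 2^{-m}(F+I)$ equidistributed for the additive Haar measure on $\Mat_2(\Z_2)$ as $p$ ranges over primes with $F \equiv -I \pmod{2^m}$. Writing $M = \left(\begin{smallmatrix} x & y \\ z & w\end{smallmatrix}\right)$ and $N = (x-w)^2 + 4yz$, the exact $2$-adic identity $t^2 - 4p = 2^{2m} N$ (valid because $\det F = p$ and $\tr F = t$ in $\Z_2$) gives $v_2(t^2-4p) = 2m + v_2(N)$ and $\sqf(t^2-4p)=\sqf(N)$, so that $\disc \mathcal{O}_0 \bmod 8$ is read off from $\sqf(N)\bmod 8$ and, via \eqref{eqn:volcano_height}, $h(V_p) = m + \tfrac12\big(v_2(N) - v_2(\disc\mathcal{O}_0)\big)$.

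The crux is to identify exactly which matrices $M$ give anomalous primes of defect $(m+1,m)$. I would use the compatible bases of Section \ref{Q}: taking $P,Q$ for $T_2 E$ with $S = P_2$, and choosing $Q',R'$ for $T_2 E'$ with $\varphi_*(Q) = Q'$ and $\varphi_*(P) = 2R'$ (possible since $\varphi$ kills $P_2$), the isogeny is $\varphi_* = \left(\begin{smallmatrix} 0 & 1 \\ 2 & 0 \end{smallmatrix}\right)$, and Frobenius-compatibility $\varphi_* F = F' \varphi_*$ yields $F' = -I + 2^m \left(\begin{smallmatrix} w & z/2 \\ 2y & x\end{smallmatrix}\right)$. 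Thus $F' \equiv -I \pmod{2^m}$ if and only if $z$ is even, and by Proposition \ref{F'form} together with Lemma \ref{adelman_lem} (equivalently, the splitting type of the quartic \eqref{quartic}), the prime $p$ is anomalous of defect $(m+1,m)$ precisely when $z$ is odd. In particular $z$ odd forces $M \not\equiv 0 \pmod 2$, so the $E$-side condition is automatic, and $\Prob(z \text{ odd}) = 1/2$ recovers the proportion of Theorem \ref{m_proportion}.

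Conditioning on $z$ odd is what makes the computation collapse. Since $z$ is then a $2$-adic unit, $y' := yz$ is Haar-uniform and independent of $u := x-w$, so on $S_m$ the quantity $N$ has the distribution of $u^2 + 4y'$ with $u,y'$ independent uniform. If $u$ is odd ($\Prob = 1/2$) then $v_2(N)=0$, $h(V_p)=m$, and $N \equiv 1+4y' \equiv 1$ or $5 \pmod 8$ according as $y'$ is even or odd, each with probability $1/2$; this already gives $S_m'(1,m)=S_m'(5,m)$. If $u$ is even, then $N = 4N_1$ with $N_1 = (u/2)^2 + y'$ again Haar-uniform on $\Z_2$, so $v_2(N_1)=j$ with probability $2^{-j-1}$ and the unit $2^{-j}N_1$ is uniform mod $8$; the parity of $j$ sorts the contributions into $\disc \equiv 1,5$ ($j$ even, unit $\equiv 1,5$), $\disc \equiv 4$ ($j$ even, unit $\equiv 3,7$), and $\disc \equiv 0$ ($j$ odd), with heights $m+1+j/2$, $m+j/2$, and $m+(j-1)/2$ respectively. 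Summing the geometric weights $2^{-j-1}$ against these heights produces exactly $S_m'(1,H)=S_m'(5,H)=4^{-(H-m+1)}$ and $S_m'(0,H)=S_m'(4,H)=\tfrac12\,4^{-(H-m+1)}$.

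The two genuinely delicate inputs, where I expect the real work to lie, are as follows. First, one must justify that a single Chebotarev equidistribution of $F$ suffices; this is fine because $F' = \varphi_* F \varphi_*^{-1}$ is a fixed function of $F$, imposing no independent constraint, but it does require pinning down that $G$ is exactly the stabilizer of the rational $2$-torsion line (forced by the index-$3$ hypothesis) so that $M$ is genuinely Haar-uniform. Second, the clean criterion that $p$ is anomalous if and only if $z$ is odd rests on arranging the bases so that $\varphi_*$ is antidiagonal; establishing this criterion rigorously is the heart of the matter, after which the remaining argument is the elementary convolution above. It is worth noting that the unconditioned Haar measure exhibits a $3:1$ bias toward $\disc\equiv 1$ over $\disc\equiv 5$ (coming from $yz$ being even with probability $3/4$), and that this bias is precisely annihilated by the conditioning on $z$ odd; this cancellation is the mechanism behind the predicted equality $S_m'(1,H)=S_m'(5,H)$. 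Since the scheme closes, I would in fact present Conjecture \ref{volcano_conj} as a theorem, reserving the probabilistic language of this section to interpret the factor $4^{-(H-m+1)}$ as the decay of volcano mass per additional level of height.
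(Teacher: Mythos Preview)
The paper does not prove this statement: it is presented as a conjecture, and the authors explicitly write that they ``do not currently have a satisfactory explanation of why Frobenius at anomalous primes of defect $(m+1,m)$ should correspond to these `random matrices with $y$ odd'.''  What the paper actually proves (Theorems \ref{height_dist} and \ref{valuation_thm}, via Lemmas \ref{alpha small}--\ref{alpha equal}) is that a heuristic model---a Haar-random $M\in\Mat_2(\Z_2)$ with the \emph{ad hoc} constraint $v_2(y)=0$---reproduces the conjectured values.

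Your proposal goes strictly further: you supply precisely the missing justification for that constraint.  By choosing isogeny-compatible bases with $\varphi_* = \left(\begin{smallmatrix}0&1\\2&0\end{smallmatrix}\right)$ and reading off $F' = \varphi_* F \varphi_*^{-1} = -I + 2^{m-1}\left(\begin{smallmatrix}2w&z\\4y&2x\end{smallmatrix}\right)$, you obtain the clean criterion that, for primes with $F\equiv -I\pmod{2^m}$, membership in $S_m$ is equivalent to $z$ being odd.  This is exactly the ``one off-diagonal entry a unit'' condition the paper's model postulates; the discrepancy $y$ versus $z$ is a basis convention and is invisible in $N=(x-w)^2+4yz$.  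Your Haar computation---noting that once $u=x-w$ is even, $N_1=(u/2)^2+y'$ is uniform on $\Z_2$---is a streamlined repackaging of Lemmas \ref{alpha small}--\ref{alpha equal}, and your observation about the $3{:}1$ bias being annihilated by conditioning on a unit off-diagonal entry is exactly what is buried in the paper's case analysis.

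Two points deserve explicit mention before you declare the conjecture a theorem.  First, Chebotarev equidistributes conjugacy classes in $G$, not elements, so you should record that ``$z$ odd'' is $G$-conjugation invariant: since $G$ is the full preimage of the stabilizer of $\langle P_2\rangle$ in $\GL_2(\F_2)$, conjugating $M$ by any $g\in G$ preserves the parity of $M_{21}$ (equivalently, ``$M\bmod 2$ does not fix the line $\langle P_2\rangle$'' is intrinsic).  Second, the existence of the $\Z_2$-basis $(Q',R')$ with $2R'=\varphi_*(P)$ should be checked: $\varphi_*(T_2E)$ has index $2$ in $T_2E'$, $\varphi_*(P)\in 2T_2E'$ since $P_2\in\ker\varphi$, and $R'$ is then uniquely determined with $\langle Q',R'\rangle=T_2E'$.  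Neither point is a genuine gap, but both are needed to close the argument; with them in place you have in fact promoted the conjecture to a theorem, which the paper does not attempt.
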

We give one quick check that this conjecture is reasonable.  Since every $p \in S_m$ lies in exactly one of the sets $S_m(i,H)$, it must be the case that 
\[
\sum_{i\in \{0,1,4,5\}} \sum_{H \ge m} S_m(i,H) = 1.
\]  
For $i \in \{1,5\}$ we have 
\[
\sum_{H \ge m} S_m(i,H) = \sum_{H \ge m} 4^{-(H-(m-1))} = \frac{1}{4} \cdot \frac{1}{1-\frac{1}{4}} = \frac{1}{3}.
\]
For $i \in \{0,4\}$ we have 
\[
\sum_{H \ge m} S_m(i,H) = \sum_{H \ge m} \frac{1}{2} \cdot 4^{-(H-(m-1))} = \frac{1}{8} \cdot \frac{1}{1-\frac{1}{4}} = \frac{1}{6}.
\]

There are three possibilities for the shape of the crater of the volcano $V_p$, depending on whether $\disc \mathcal{O}_0$ is congruent to $1$ modulo $8,\ 5$ modulo $8$, or $0$ modulo $4$.  This calculation suggests that among the set of all anomalous primes, these three shapes are equally likely, and further that if we divide up the volcanoes of a fixed height $H \ge m$, all three crater shapes are equally likely.  Another nice consequence of this conjecture is that for any fixed $i \in \{0,1,4,5\}$ it is clear how $S_m(i,H)$ changes with $H$, as it predicts that 
\[
\frac{S_m(i,H+1)}{S_m(i,H)} = \frac{1}{4}.
\]

In Section \ref{Q} we saw that if $p$ is anomalous of defect $(m+1,m)$ and $F \in \GL_2(\Z_2)$ is in the conjugacy class of Frobenius, then 
\[
F = -I + 2^m \left(\begin{array}{cc} x & y \\ z & w
\end{array}\right)
\]
where $x,y,z,w$ are not all $0\pmod{2}$.  At the end of Section \ref{volcanoes} we saw that $\disc \mathcal{O}_0 \pmod{8}$ is determined by $\sqf((x-w)^2 + 4yz) \pmod{8}$ and that $h(V_p)$ is determined by both $\disc \mathcal{O}_0 \pmod{8}$ and $v_2\left((x-w)^2+4yz\right)$.

The goal of this section is to show that if the matrix $\left(\begin{smallmatrix} x & y \\ z & w \end{smallmatrix}\right)$ were distributed like a Haar random matrix in $\Mat_2(\Z_2)$ subject to the additional constraint that $v_2(y) = 0$, we would see the behavior predicted in Conjecture \ref{volcano_conj}.  We do not currently have a satisfactory explanation of why Frobenius at anomalous primes of defect $(m+1,m)$ should correspond to these `random matrices with $y$ odd'.  

Fix a positive integer $m \ge 2$.  We now explain our model for anomalous primes of defect $(m+1,m)$.  Let $E$ be an elliptic curve over $\F_p$ with trace of Frobenius $t$ and $V_p$ be the associated 2-isogeny volcano over $\F_p$. Let $K = \Q(\sqrt{t^2-4p}) = \Q(\sqrt{D})$ where $D = \sqf(t^2-4p)$.  Recall from Section \ref{volcanoes} that $h(V_p) = H$ if and only if
\[
v_2(t^2 -4p) = 2m + v_2((x-w)^2 + 4yz) =  \begin{cases} 2H & \text{if }  D\equiv 1 \pmod{4} \\
2H+2 & \text{if } D\equiv 3 \pmod{4} \\
2H+3 & \text{if } D\equiv 2 \pmod{4}. 
\end{cases}
\]
Also recall that $\disc \mathcal{O}_0\equiv \disc \mathcal{O}_K\pmod{8}$.

Instead of starting from an elliptic curve over $\F_p$ we consider a Haar random matrix $M = \left(\begin{smallmatrix} x & y \\ z & w \end{smallmatrix}\right)$ with entries in $\Z_2$ subject to the additional constraint that $v_2(y) = 0$.  We use $\det(-I + 2^m M)$ in place of $p$ and $\operatorname{trace}\left(-I+2^m M\right) = -2+2^m(x+w)$ in place of $t$.  Note that for any fixed $x$, the map taking $w$ to $\alpha = x-w$ is a bijection on $\Z_2$.  For the rest of the proof we usually do not refer to $x$ and $w$, but only to $\alpha$.  Let $\alpha$ and $z$ be random elements of $\Z_2$ distributed with respect to Haar measure, and $y \in \Z_2^*$ be a random unit in $\Z_2$.  We write $\Prob(\cdot)$ to denote the proportion of $\alpha,y,z$ for which some property holds.

We define a kind of height associated to the matrix $M$.  Let 
\[
H_M = \begin{cases}
m + \frac{v_2((x-w)^2 + 4yz)}{2} & \text{if } \sqf((x-w)^2 + 4yz) \equiv 1 \pmod{4} \\
m -1 + \frac{v_2((x-w)^2 + 4yz)}{2} & \text{if } \sqf((x-w)^2 + 4yz) \equiv 3 \pmod{4} \\
m -1 + \frac{v_2((x-w)^2 + 4yz)-1}{2} & \text{if } \sqf((x-w)^2 + 4yz) \equiv 2 \pmod{4}
\end{cases}.
\]

\begin{thm} \label{height_dist}
Let $m \ge 2$ and $H \ge m$ be positive integers.  Let $M = \left(\begin{smallmatrix} x & y \\ z & w \end{smallmatrix}\right) \in \Mat_2(\Z_2)$ be a Haar random matrix subject to the additional constraint that $v_2(y) = 0$.  
\begin{enumerate}
\item For $i \in \{1,5\}$, the probability that $\sqf((x-w)^2 + 4yz) \equiv i \pmod{8}$ and $H_M = H$ is $4^{-(H-(m-1))}$.  
\item For $i \in \{2,3\}$, the probability that $\sqf((x-w)^2 + 4yz) \equiv i \pmod{4}$ and $H_M = H$ is $\frac{1}{2}\cdot 4^{-(H-(m-1))}$.
\end{enumerate}
\end{thm}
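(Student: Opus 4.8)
The plan is to convert the problem into a clean computation about the $2$-adic valuation and unit part of a single Haar-random quantity. First I would replace the pair $(x,w)$ by $\alpha = x-w$: since for fixed $x$ the map $w \mapsto x-w$ preserves Haar measure, the triple $(\alpha,y,z)$ consists of independent variables with $\alpha, z$ Haar in $\Z_2$ and $y$ Haar in $\Z_2^\times$, and everything in sight depends only on $S := \alpha^2 + 4yz = (x-w)^2 + 4yz$. Writing $v = v_2(S)$ and $u = S/2^{v}$ for the unit part, I would record the dictionary between $\sqf(S)$ and $(v,u)$: since $2^{2k}$ and squares of units are $\equiv 1 \pmod 8$, when $v$ is even one has $\sqf(S) \equiv u \pmod 8$, while when $v$ is odd one has $\sqf(S) \equiv 2 \pmod 4$. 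Combined with the definition of $H_M$, this turns each of the four target events into a constraint on $(v, u \bmod 8)$ together with a prescribed value of $v$; setting $j = H-(m-1) \ge 1$, the events $\sqf(S) \equiv 1,5 \pmod 8$ force $v = 2(j-1)$, the event $\sqf(S) \equiv 3 \pmod 4$ forces $v = 2j$, and $\sqf(S) \equiv 2 \pmod 4$ forces $v = 2j+1$.

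The computation itself I would split on the parity of $\alpha$. If $\alpha$ is odd (probability $\tfrac12$) then $\alpha^2 \equiv 1 \pmod 8$ and $S$ is odd, so $v = 0$ and $u \equiv 1 + 4yz \pmod 8$; since $y$ is a unit, $yz$ is odd exactly when $z$ is, giving $u \equiv 1 \pmod 8$ or $u \equiv 5 \pmod 8$ each with probability $\tfrac12$, and this is the only source of the $v=0$ (i.e.\ $j=1$) contributions. If $\alpha = 2\alpha'$ is even (probability $\tfrac12$) then $S = 4S'$ with $S' = \alpha'^2 + yz$, and here is the crux: because $y \in \Z_2^\times$ and multiplication by a unit preserves Haar measure on $\Z_2$, the product $yz$ is Haar, so $S'$---an independent translate of $yz$ by $\alpha'^2$---is itself Haar in $\Z_2$. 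Consequently $v_2(S')$ is geometric with $\Prob(v_2(S') = k) = 2^{-(k+1)}$, the unit part of $S'$ is uniform on $\{1,3,5,7\} \pmod 8$, the two are independent, and $v = 2 + v_2(S')$ while $u$ is the unit part of $S'$.

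Finally I would assemble the four probabilities. For each target event I substitute the required value of $v$ from the first step, read off $\Prob(v_2(S') = \cdot)$ and the residue probability for $u$ from the $\alpha$-even law (or from the $\alpha$-odd law when $j=1$), and multiply by $\tfrac12$ for the parity of $\alpha$. For instance $\sqf(S) \equiv 1 \pmod 8$ with $j \ge 2$ needs $v_2(S') = 2j-4$, contributing $\tfrac12 \cdot 2^{-(2j-3)} \cdot \tfrac14 = 4^{-j}$, matching part~(1), while the $j=1$ value $4^{-1}$ comes instead from the $\alpha$-odd case. The remaining three events are identical bookkeeping and yield $4^{-j}$ for $\sqf(S) \equiv 5 \pmod 8$ and $\tfrac12\, 4^{-j}$ for $\sqf(S) \equiv 3,2 \pmod 4$, which is exactly the claim with $j = H-(m-1)$. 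The only genuinely non-routine step is the measure-preserving argument that makes $S'$ Haar; once that is in hand, the valuation and residue distributions are standard and the rest is arithmetic.
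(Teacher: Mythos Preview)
Your argument is correct and takes a genuinely different route from the paper's. The paper deduces Theorem~\ref{height_dist} from a stronger distributional statement (Theorem~\ref{valuation_thm}), which it proves by a three-way case split on the relative sizes of $v_2(\alpha^2)$ and $v_2(4yz)$ (Lemmas~\ref{alpha small}, \ref{alpha big}, \ref{alpha equal}); the ``equal'' case is further stratified by the common valuation $2\beta$, and the final answer emerges only after summing a geometric series in $\beta$. Your proof bypasses all of this by conditioning instead on the parity of $\alpha$: the $\alpha$-odd branch handles the base case $H=m$ directly, and in the $\alpha$-even branch the single observation that $S'=(\alpha/2)^2+yz$ is Haar on $\Z_2$ (because multiplication by the unit $y$ makes $yz$ Haar and independent of $\alpha/2$, and translation preserves Haar measure) immediately yields the joint law of $v_2(S')$ and the unit part modulo $8$ with no further casework. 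What the paper's decomposition buys is slightly more granular information (the individual contributions of each valuation configuration), but for the statement actually being proved your argument is both shorter and conceptually cleaner, replacing the geometric-series bookkeeping with a one-line measure-preservation claim.
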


Theorem \ref{height_dist} follows from the following stronger result.
\begin{thm}\label{valuation_thm}
\begin{enumerate}
\item 
\begin{eqnarray*}
\Prob\left(\begin{array}{ccc} v_2(\alpha^2+4yz) & = &k \\ \sqf(\alpha^2+4yz) & \equiv & 1\pmod{8} \end{array}\right) & = &  
\Prob\left(\begin{array}{ccc} v_2(\alpha^2+4yz) & = &k \\  \sqf(\alpha^2+4yz) & \equiv & 5\pmod{8} \end{array}\right) \\
& = & 
\begin{cases} 
0 & \text{if } k \text{ is odd},\\
2^{-(k+2)} & \text{if } k \text{ is even}.
\end{cases}
\end{eqnarray*}
\item
\[
\Prob\left(\begin{array}{ccc} v_2(\alpha^2+4yz) & = &k \\  \sqf(\alpha^2+4yz) & \equiv & 3\pmod{4} \end{array}\right) = 
\begin{cases} 
0 & \text{if } k \text{ is odd or } k=0,\\
2^{-(k+1)} & \text{if } k\ge 2 \text{ is even}.
\end{cases}
\]

\item
\[
\Prob\left(\begin{array}{ccc} v_2(\alpha^2+4yz) & = &k  \\ \sqf(\alpha^2+4yz) & \equiv & 2\pmod{4} \end{array}\right) = 
\begin{cases} 
0 & \text{if } k \text{ is even or } k=1,\\
2^{-k} &  \text{if } k\ge 3 \text{ is odd}.
\end{cases}
\]
\end{enumerate}
\end{thm}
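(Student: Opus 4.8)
The plan is to track the pair $(v_2(N),\sqf(N)\bmod 8)$ for $N=\alpha^2+4yz$ by conditioning on the parity of $\alpha$, and to reduce the bulk of the computation to the distribution of a single Haar-random element of $\Z_2$. The engine is the elementary observation that if $W$ is Haar on $\Z_2$ then $\Prob(v_2(W)=j)=2^{-(j+1)}$, and, conditioned on $v_2(W)=j$, the unit part $u=2^{-j}W$ is a Haar-random unit, so $u\bmod 8$ is uniform on $\{1,3,5,7\}$. Translating to squarefree parts: for $j$ even one has $\sqf(W)\equiv u\pmod 8$, while for $j$ odd one has $\sqf(W)\equiv 2u\equiv 2\pmod 4$.

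First I would dispose of the case $\alpha$ odd (probability $1/2$). Here $\alpha^2\equiv 1\pmod 8$ and $4yz\equiv 4(z\bmod 2)\pmod 8$ since $y$ is a unit; hence $N\equiv 1\pmod 8$ when $z$ is even and $N\equiv 5\pmod 8$ when $z$ is odd. In both subcases $v_2(N)=0$, and the two subcases split the mass evenly, contributing $1/4$ to each of $(k,\sqf)=(0,\,1\bmod 8)$ and $(0,\,5\bmod 8)$ and nothing to parts (2), (3). This already accounts for the $k=0$ entries of part (1).

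The key step is the case $\alpha$ even, say $\alpha=2\alpha'$, where $N=4(\alpha'^2+yz)$. Since $y\in\Z_2^{\times}$ and multiplication by a unit preserves Haar measure, $s:=yz$ is Haar on $\Z_2$ and independent of $\alpha'$; and since translating a Haar variable by the independent constant $\alpha'^2$ again yields a Haar variable, the quantity $M':=\alpha'^2+s=N/4$ is Haar-distributed on $\Z_2$. Now apply the engine, using $v_2(N)=2+v_2(M')$ and $\sqf(N)\equiv\sqf(M')\pmod 8$ (as $4$ is a square). Writing $j=v_2(M')$, one reads off that $\sqf(N)\equiv 1$ (resp.\ $5$) mod $8$ forces $j=k-2$ even with unit $\equiv 1$ (resp.\ $5$) mod $8$, giving probability $\tfrac12\cdot 2^{-(j+1)}\cdot\tfrac14=2^{-(k+2)}$ for even $k\ge 2$; the case $\sqf\equiv 3\pmod 4$ forces $j$ even with unit $\equiv 3,7\pmod 8$ (probability $1/2$), yielding $2^{-(k+1)}$ for even $k\ge 2$; and $\sqf\equiv 2\pmod 4$ forces $j$ odd, yielding $2^{-k}$ for odd $k\ge 3$. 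Assembling the odd- and even-$\alpha$ contributions gives exactly the three formulas.

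The computation is essentially mechanical once the reduction to a Haar element is in hand, so there is no deep obstacle; the only point requiring care is the bookkeeping that matches the valuation shift $k=2+j$ and the parity of $j$ to the correct residue class of $\sqf(N)\bmod 8$, together with the check that the $\alpha$-odd case supplies precisely the missing $k=0$ terms of part (1) and none for parts (2), (3) (consistent with their prescribed vanishing at $k=0$).
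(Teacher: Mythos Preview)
Your argument is correct and takes a genuinely different route from the paper. The paper partitions according to the trichotomy $v_2(\alpha^2)<v_2(4yz)$, $v_2(\alpha^2)>v_2(4yz)$, $v_2(\alpha^2)=v_2(4yz)$, proves a separate lemma for each case, and then assembles the answer via several telescoping geometric sums. Your key observation---that once $\alpha=2\alpha'$, the quantity $M'=\alpha'^2+yz$ is Haar on $\Z_2$ because $yz$ is Haar and translation by the independent $\alpha'^2$ preserves Haar measure---short-circuits that entire case analysis. From there the joint distribution of $(v_2(N),\sqf(N)\bmod 8)$ reduces to the standard description of a single Haar element, and the $\alpha$-odd case supplies exactly the missing $k=0$ mass for part (1). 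The paper's approach makes the individual contributions from each valuation configuration visible, which could be useful if one wanted finer information, but your reduction is considerably shorter and more conceptual.
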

It is straightforward to check that this result implies Theorem \ref{height_dist} by checking the base case $H = m$ and then thinking about what happens to these probabilities as we increase $H$, dividing things into cases based on $\sqf(\alpha^2+4yz)\pmod{8}$ and using the definition of $H_M$.

We prove this result by dividing the set of all $\alpha, z \in \Z_2$ and $y \in \Z_2^*$ based on the relative sizes of $v_2(\alpha^2)$ and $v_2(4yz) = 2 + v_2(z)$.  More precisely, we prove Theorem \ref{valuation_thm} in three parts, where each part is divided into cases based on $\sqf(\alpha^2+4yz)\pmod{8}$.

\begin{lem}\label{alpha small}
\begin{enumerate}
\item
\[
\Prob\left(\begin{array}{ccc} v_2(\alpha^2+4yz) & = & k \\  \sqf(\alpha^2+4yz) & \equiv & 1\pmod{8} \\  v_2(\alpha^2)< v_2(4yz)  & & \end{array}  \right)
=
\begin{cases} 
0 & \text{if } k \text{ is odd},\\
2^{-(3k/2+2)} &  \text{otherwise}.
\end{cases}
\]
\item
\[
\Prob\left(\begin{array}{ccc} v_2(\alpha^2+4yz) & = & k \\  \sqf(\alpha^2+4yz) & \equiv & 5\pmod{8} \\  v_2(\alpha^2)< v_2(4yz) & & \end{array} \right)
=
\begin{cases} 
0 & \text{if } k \text{ is odd},\\
2^{-(3k/2+2)} &  \text{otherwise}.
\end{cases}
\]

\item
\[
\Prob\left(\begin{array}{ccc} v_2(\alpha^2+4yz) & = & k \\  \sqf(\alpha^2+4yz) & \equiv & 3\pmod{4}  \\  v_2(\alpha^2)< v_2(4yz) & & \end{array} \right)
=
\begin{cases} 
0 & \text{if } k \text{ is odd or } k = 0,\\
2^{-(3k/2+1)} &  \text{otherwise}.
\end{cases}
\]
\item 
\[
\Prob\left(\begin{array}{ccc} v_2(\alpha^2+4yz) & = & k \\  \sqf(\alpha^2+4yz) & \equiv & 2\pmod{4} \\ v_2(\alpha^2)< v_2(4yz) & &  \end{array} \right)
=
0.
\]
\end{enumerate}
\end{lem}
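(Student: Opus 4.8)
The plan is to reduce each of the four probabilities to a pair of independent valuation conditions on $\alpha$ and $z$. The starting observation is that the standing hypothesis $v_2(\alpha^2) < v_2(4yz)$ forces $v_2(\alpha^2 + 4yz) = v_2(\alpha^2) = 2v_2(\alpha)$, which is \emph{even}; this alone accounts for the vanishing of every probability when $k$ is odd. Writing $\alpha = 2^a u$ with $u \in \Z_2^*$, so that $a = v_2(\alpha)$ and $k = 2a$, and $z = 2^c z'$ with $z' \in \Z_2^*$, I would factor out the square $2^{2a}$:
\[
\alpha^2 + 4yz = 2^{2a}\bigl(u^2 + 2^{e} y z'\bigr), \qquad e := v_2(4yz) - v_2(\alpha^2) = (2 + c) - 2a \ge 1.
\]
Since $2^{2a}$ is a square in $\Z_2$, we have $\sqf(\alpha^2 + 4yz) = \sqf(u^2 + 2^{e} y z')$, and because the parenthesized factor is a unit, its squarefree part agrees with it modulo $8$. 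In particular the squarefree part is odd, which settles part (4) at once: the event $\sqf \equiv 2 \pmod{4}$ has probability $0$.

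The heart of the argument is a short case analysis on the gap $e$, using $u^2 \equiv 1 \pmod{8}$ and that $yz'$ is a unit. If $e \ge 3$ then $u^2 + 2^{e}yz' \equiv 1 \pmod{8}$, giving $\sqf \equiv 1 \pmod{8}$, the situation of part (1); if $e = 2$ then $u^2 + 4yz' \equiv 5 \pmod{8}$, the situation of part (2); and if $e = 1$ then $u^2 + 2yz' \equiv 3 \pmod{4}$, the situation of part (3). The key feature is that in each case the residue class of $\sqf$ is \emph{independent} of the particular units $y$ and $z'$ and is determined entirely by $e$, hence by the two valuations $a = v_2(\alpha)$ and $c = v_2(z)$. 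Translating back, for fixed even $k = 2a$: part (1) corresponds to $v_2(\alpha) = k/2$ together with $v_2(z) \ge k+1$ (i.e.\ $e \ge 3$); part (2) to $v_2(\alpha) = k/2$ and $v_2(z) = k$; and part (3) to $v_2(\alpha) = k/2$ and $v_2(z) = k-1$, the last requiring $c \ge 0$ and therefore $k \ge 2$, which is exactly the exclusion of $k = 0$ recorded in the statement.

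Finally I would evaluate each probability from the marginal distributions, using that for a Haar-random element of $\Z_2$ one has $\Prob(v_2 = j) = 2^{-(j+1)}$ and $\Prob(v_2 \ge j) = 2^{-j}$, that $\alpha$, $y$, $z$ are independent, and that $y$ is free once it is a unit. This gives $2^{-(k/2+1)}\cdot 2^{-(k+1)} = 2^{-(3k/2+2)}$ for both parts (1) and (2), and $2^{-(k/2+1)}\cdot 2^{-k} = 2^{-(3k/2+1)}$ for part (3). I do not anticipate a real obstacle: the lemma is entirely a bookkeeping of $2$-adic valuations, and the only points that require genuine care are confirming that the gap $e$ by itself pins down $\sqf \bmod 8$, and tracking the constraint $v_2(z) \ge 0$ that eliminates the $k = 0$ case in part (3).
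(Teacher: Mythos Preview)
Your argument is correct and follows essentially the same route as the paper: factor out $2^{2a}$, reduce to the residue of the unit $u^2+2^e yz'$ modulo $8$, and observe that the case $e=1,\ e=2,\ e\ge 3$ corresponds exactly to $\sqf\equiv 3\pmod 4,\ 5\pmod 8,\ 1\pmod 8$, then read off the resulting valuation conditions on $\alpha$ and $z$. The paper uses the equivalent bookkeeping variable $\nu$ defined by $4yz=2^{k+1}\nu$ (so your $e$ is $v_2(\nu)+1$), but the logic and the probability computations are identical.
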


\begin{lem}\label{alpha big}
\begin{enumerate}
\item
\[
\Prob\left(\begin{array}{ccc} v_2(\alpha^2+4yz) & = & k \\  \sqf(\alpha^2+4yz) & \equiv & 1\pmod{8} \\ v_2(\alpha^2) > v_2(4yz) & &\end{array} \right)
=
\begin{cases} 
0 & \text{if } k \text{ is odd},\\
2^{-(3k/2+2)} &  \text{otherwise}.
\end{cases}
\]
\item
\[
\Prob\left(\begin{array}{ccc} v_2(\alpha^2+4yz) & = & k \\  \sqf(\alpha^2+4yz) & \equiv & 5\pmod{8} \\ v_2(\alpha^2) > v_2(4yz) & &\end{array}\right)
=
\begin{cases} 
0 & \text{if } k \text{ is odd},\\
2^{-(3k/2+2)} &  \text{otherwise}.
\end{cases}
\]

\item
\[
\Prob\left(\begin{array}{ccc} v_2(\alpha^2+4yz) & = & k \\  \sqf(\alpha^2+4yz) & \equiv & 3\pmod{4}\\ v_2(\alpha^2) > v_2(4yz) & & \end{array} \right)
=
\begin{cases} 
0 & \text{if } k \text{ is odd or } k=0,\\
2^{-(3k/2+1)} &  \text{otherwise}.
\end{cases}
\]

\item
\[
\Prob\left(\begin{array}{ccc} v_2(\alpha^2+4yz) & = & k \\  \sqf(\alpha^2+4yz) & \equiv & 2\pmod{4} \\ v_2(\alpha^2) > v_2(4yz) & & \end{array} \right)
=
\begin{cases} 
0 & \text{if } k \text{ is even or } k=1,\\
2^{-(3k/2-1/2)} &  \text{otherwise}.
\end{cases}
\]
\end{enumerate}
\end{lem}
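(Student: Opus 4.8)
The plan is to compute each probability by first conditioning on $v_2(z)$ and then on $v_2(\alpha)$, exploiting that in the regime $v_2(\alpha^2) > v_2(4yz)$ the valuation of the sum is forced. Since $y$ is a unit, $v_2(4yz) = 2 + v_2(z)$, and because $v_2(\alpha^2) > v_2(4yz)$ we get $v_2(\alpha^2 + 4yz) = v_2(4yz) = 2 + v_2(z)$. Hence the event $v_2(\alpha^2 + 4yz) = k$ forces $v_2(z) = k - 2$; in particular $k \ge 2$, which already accounts for the vanishing at $k = 0,1$. Writing $z = 2^{k-2}z'$ with $z'$ odd, I would set $\alpha^2 + 4yz = 2^k u$ with $u = yz' + \alpha^2/2^k$ a $2$-adic unit, and record that $\sqf(\alpha^2+4yz)\pmod 8$ is controlled by the parity of $k$ (whether $2^k$ is a square) together with $u \bmod 8$, using the remark at the end of Section~\ref{volcanoes} that $\sqf(t^2-4p) \equiv \sqf(\alpha^2+4yz)\pmod 8$.

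Next I would factor each probability as
\[
\Prob(v_2(z)=k-2)\cdot \Prob(v_2(\alpha^2)>v_2(4yz)\mid v_2(z)=k-2)\cdot \Prob(\sqf\text{-class}\mid\cdots),
\]
and evaluate the first two factors from Haar measure: $\Prob(v_2(z)=k-2)=2^{-(k-1)}$, while the constraint $2v_2(\alpha)>k$ is $v_2(\alpha)\ge \lfloor k/2\rfloor+1$, of probability $2^{-(k/2+1)}$ when $k$ is even and $2^{-(k+1)/2}$ when $k$ is odd. The remaining task is the conditional distribution of the squarefree class, where the parity of $k$ splits the argument. When $k$ is odd, $2^k$ contributes a single factor of $2$, so $\sqf(\alpha^2+4yz)=2\,\sqf(u)\equiv 2\pmod 4$ unconditionally; this puts all the mass in part (4) and none in (1)--(3), and multiplying the two computed factors gives $2^{-(k-1)}\cdot 2^{-(k+1)/2}=2^{-(3k/2-1/2)}$, with $k\ge 3$ since $k\ge 2$ is even-excluded here.

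When $k$ is even, $2^k$ is a square and the class is read off from $u\bmod 8$. The key claim I would establish is that, conditional on the above, $u\bmod 8$ is uniform on $\{1,3,5,7\}$. Indeed $yz'\bmod 8$ is uniform, being a product of independent Haar-random units, and the correction $\alpha^2/2^k\bmod 8$ equals $4$ when $v_2(\alpha)=k/2+1$ and $0$ when $v_2(\alpha)\ge k/2+2$; conditional on $v_2(\alpha^2)>v_2(4yz)$ these two sub-events are equiprobable, and since adding $4$ permutes $\{1,3,5,7\}$ the distribution of $u\bmod 8$ stays uniform. Thus $\Prob(u\equiv 1\bmod 8)=\Prob(u\equiv 5\bmod 8)=\tfrac14$ and $\Prob(u\equiv 3\bmod 4)=\tfrac12$; multiplying by $2^{-(k-1)}\cdot 2^{-(k/2+1)}$ yields $2^{-(3k/2+2)}$ for parts (1) and (2) and $2^{-(3k/2+1)}$ for part (3), while part (4) vanishes for $k$ even.

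I expect the mod-$8$ bookkeeping in the even case to be the main obstacle: one must track the correction term $\alpha^2/2^k$ finely enough to pin down $u\bmod 8$ (not merely $\bmod 4$) and verify it does not bias the uniformity coming from $yz'$. The rest is care at the boundaries—confirming that $k\ge 2$ is forced, that $k=1$ and the measure-zero events $z=0$, $\alpha=0$ contribute nothing—after which the stated formulas fall out. The companion Lemma~\ref{alpha small} would be treated by the same conditioning with the roles of $v_2(\alpha^2)$ and $v_2(4yz)$ interchanged.
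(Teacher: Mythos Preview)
Your proposal is correct and follows essentially the same route as the paper: both arguments fix $v_2(z)=k-2$, then impose $v_2(\alpha)\ge\lfloor k/2\rfloor+1$, and read off the squarefree class from the unit part after factoring out $2^k$. The only difference is cosmetic: in the even-$k$ case the paper observes directly that for fixed $z$ the product $yz'$ is a Haar-random unit, so $4\gamma^2+yz'\pmod 8$ is uniform on odd residues regardless of $\gamma$, whereas you make the same conclusion explicit by splitting on whether the correction $\alpha^2/2^k$ is $0$ or $4\pmod 8$ and noting that adding $4$ permutes $\{1,3,5,7\}$.
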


\begin{lem}\label{alpha equal}
\begin{enumerate}
\item
\[
\Prob\left(\begin{array}{ccc} v_2(\alpha^2+4yz) & = & k \\  \sqf(\alpha^2+4yz) & \equiv & 1\pmod{8} \\ v_2(\alpha^2) = v_2(4yz) = 2\beta \end{array}\right)
=
\begin{cases} 
0 & \text{if } k \text{ is odd or } k \in \{0,2\},\\
2^{-(k+\beta+2)} &  \text{if } k \text{ is even and } 2 \le 2\beta \le k-1.
\end{cases}
\]
\item
\[
\Prob\left(\begin{array}{ccc} v_2(\alpha^2+4yz) & = & k \\  \sqf(\alpha^2+4yz) & \equiv & 5\pmod{8} \\ v_2(\alpha^2) = v_2(4yz) = 2\beta \end{array} \right)
=
\begin{cases} 
0 & \text{if } k \text{ is odd or } k \in \{0,2\},\\
2^{-(k+\beta+2)} &  \text{if } k \text{ is even and } 2 \le 2\beta \le k-1.
\end{cases}
\]

\item
\[
\Prob\left(\begin{array}{ccc} v_2(\alpha^2+4yz) & = & k \\  \sqf(\alpha^2+4yz) & \equiv & 3\pmod{4} \\ v_2(\alpha^2) = v_2(4yz) = 2\beta \end{array}  \right)
=
\begin{cases} 
0 & \text{if } k \text{ is odd or } k \in \{0,2\},\\
2^{-(k+\beta+1)} &  \text{if } k \text{ is even and } 2 \le 2\beta \le k-1.
\end{cases}
\]

\item
\[
\Prob\left(\begin{array}{ccc} v_2(\alpha^2+4yz) & = & k \\  \sqf(\alpha^2+4yz) & \equiv & 2\pmod{4} \\ v_2(\alpha^2) = v_2(4yz) = 2\beta  \end{array} \right)
=
\begin{cases} 
0 & \text{if } k \text{ is even or } k=1,\\
2^{-(k+\beta)} & \text{if } k \text{ is odd and } 2 \le 2\beta \le k-1.
\end{cases}
\]
\end{enumerate}
\end{lem}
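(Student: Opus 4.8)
The plan is to reduce all four parts of Lemma~\ref{alpha equal} to a single distributional fact about sums of $2$-adic units, and then read off the four cases by tracking parities.

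First I would make explicit the conditioning that defines this regime, namely $v_2(\alpha)=\beta$ together with $v_2(4yz)=2\beta$. Since $y\in\Z_2^{*}$, the second condition is equivalent to $v_2(z)=2\beta-2$, which forces $\beta\ge 1$, i.e. $2\beta\ge 2$; this is the origin of the lower constraint on $2\beta$ appearing in every case. The measure of this event is
\[
\Prob\bigl(v_2(\alpha)=\beta\bigr)\cdot\Prob\bigl(v_2(z)=2\beta-2\bigr)=2^{-(\beta+1)}\cdot 2^{-(2\beta-1)}=2^{-3\beta},
\]
and on it we may write $\alpha=2^{\beta}u$ and $z=2^{2\beta-2}v$ with $u,v$ independent Haar-random units. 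Then
\[
\alpha^2+4yz=2^{2\beta}\bigl(u^2+yv\bigr),
\]
so $v_2(\alpha^2+4yz)=2\beta+v_2(w)$ and $\sqf(\alpha^2+4yz)\equiv\sqf(w)\pmod 8$ (the factor $2^{2\beta}$ is a square), where $w:=u^2+yv$. Writing $k=2\beta+j$ with $j=v_2(w)$, everything now hinges on the joint law of $\bigl(v_2(w),\ \sqf(w)\bmod 8\bigr)$.

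The key step is to show that if $c$ is \emph{any} fixed unit and $r:=yv$ (a Haar-random unit, being a product of independent Haar units on the compact group $\Z_2^{*}$), then $w=c+r$ satisfies: $v_2(w)\ge 1$ always, $\Prob(v_2(w)=j)=2^{-j}$ for $j\ge 1$, and, conditioned on $v_2(w)=j$, the unit part $w/2^{j}$ is again Haar on $\Z_2^{*}$ (in particular equidistributed modulo $8$). I would prove this by fixing $c$, noting $c+r$ is even, and observing that $v_2(c+r)=j$ is the condition $r\equiv -c\pmod{2^{j}}$ with $r\not\equiv -c\pmod{2^{j+1}}$; the substitution $r=-c+2^{j}t$ turns the Haar law of $r$ on the coset $-c+2^{j}\Z_2\subseteq\Z_2^{*}$ into the Haar law of $t=w/2^{j}$ on $\Z_2$, and the requirement $v_2(w)=j$ (rather than $>j$) restricts $t$ to the units. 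Since this output law does not depend on $c$, it applies with $c=u^2$ after averaging over the Haar-random $u$. I expect this affine-substitution argument to be the main (and essentially the only) obstacle; everything else is bookkeeping.

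Finally I would assemble the four cases. For $j$ even, $\sqf(w)\equiv w/2^{j}\pmod 8$, which by the key step is uniform on $\{1,3,5,7\}$, so $\Prob\bigl(v_2(w)=j,\ \sqf(w)\equiv i\bigr)=\frac{1}{4}\cdot 2^{-j}=2^{-(j+2)}$ for each $i\in\{1,5\}$ and equals $2^{-(j+1)}$ for $\sqf(w)\equiv 3\pmod 4$; for $j$ odd, $w=2^{j}\cdot(\text{unit})$ gives $\sqf(w)\equiv 2\pmod 4$ automatically, so that class absorbs the full mass $2^{-j}$. Multiplying by the conditioning measure $2^{-3\beta}$ and substituting $j=k-2\beta$ yields exactly $2^{-(k+\beta+2)}$, $2^{-(k+\beta+1)}$, and $2^{-(k+\beta)}$ in parts (1)--(2), (3), and (4) respectively. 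The vanishing statements follow from the same parity bookkeeping: parts (1)--(3) require $j=k-2\beta$ to be even (hence $k$ even) and at least $1$, so in fact $j\ge 2$ and $2\beta\le k-1$, while part (4) requires $j$ odd (hence $k$ odd); the excluded small values $k\in\{0,1,2\}$ are precisely those admitting no $\beta\ge 1$ with $2\beta\le k-1$.
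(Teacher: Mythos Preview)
Your proposal is correct and follows essentially the same route as the paper: both factor out $2^{2\beta}$, reduce to the distribution of $w=u^2+y\nu$ for a Haar-random unit $y\nu$, and then read off the four cases from the parity of $v_2(w)$ and the residue of the unit part. The only cosmetic difference is that the paper packages the key step as ``$\delta=w/2$ is Haar on $\Z_2$'' (one substitution) whereas you phrase it as ``conditioned on $v_2(w)=j$, the unit part $w/2^{j}$ is Haar on $\Z_2^{*}$'' via your affine substitution; these are equivalent formulations of the same fact.
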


Before proving these individual results, we see how they imply Theorem \ref{valuation_thm}.  We divide this argument into cases.  Combining these three lemmas, it is clear that 
\[
\Prob\left(\begin{array}{ccc} v_2(\alpha^2+4yz) & = &k \\ \sqf(\alpha^2+4yz) & \equiv & 1\pmod{8} \end{array}\right)  =   
\Prob\left(\begin{array}{ccc} v_2(\alpha^2+4yz) & = &k \\  \sqf(\alpha^2+4yz) & \equiv & 5\pmod{8} \end{array}\right),
\]
and that these probabilities are $0$ when $k$ is odd.  When $k = 0$ we have 
\[
\Prob\left(\begin{array}{ccc} v_2(\alpha^2+4yz) & = &0 \\ \sqf(\alpha^2+4yz) & \equiv & 1\pmod{8} \end{array}\right) 
= 
2^{-2} + 0 + 0,
\]
and when $k =2$ we have 
\[
\Prob\left(\begin{array}{ccc} v_2(\alpha^2+4yz) & = &2 \\ \sqf(\alpha^2+4yz) & \equiv & 1\pmod{8} \end{array}\right) 
= 
2^{-5} + 2^{-5} + 0 = 2^{-4}.
\]
Now suppose $k \ge 4$ is even.  Note that $\lfloor \frac{k-1}{2}\rfloor = k/2-1$. We have 
\begin{eqnarray*}
\Prob\left(\begin{array}{ccc} v_2(\alpha^2+4yz) & = &k \\ \sqf(\alpha^2+4yz) & \equiv & 1\pmod{8} \end{array}\right)  & = & 
2^{-(3k/2+2)} + 2^{-(3k/2+2)} + \sum_{\beta = 1}^{\lfloor\frac{k-1}{2}\rfloor} 2^{-(k+\beta+2)} \\ 
& = & 2^{-(3k/2+1)} + 2^{-(k+2)} \sum_{\beta=1}^{k/2- 1} 2^{-\beta}.
\end{eqnarray*}
We write 
\[
\sum_{\beta=1}^{k/2- 1} 2^{-\beta} = 2^{-1} \sum_{\beta=0}^{k/2- 2} 2^{-\beta} = 2^{-1}\left(2^{-(k/2-2)} + 2^{-(k/2-3)} + \cdots + 2^{-1} + 2^0\right).
\]
We see that 
\begin{eqnarray*}
\Prob\left(\begin{array}{ccc} v_2(\alpha^2+4yz) & = &k \\ \sqf(\alpha^2+4yz) & \equiv & 1\pmod{8} \end{array}\right) 
& = & 2^{-(3k/2+1)} + 2^{-(k+3)}(2^{-(k/2-2)}+ 2^{-(k/2-3)} + \cdots +2^{-1} +2^{0}) \\
& = &  2^{-(3k/2+1)} + 2^{-(3k/2+1)} +2^{-(3k/2+2)} + \cdots  + 2^{-(k+3)} \\
& = & 2^{-(k+2)}.
\end{eqnarray*}
We next consider the analogous computation for the case where $\sqf(\alpha^2+4yz)  \equiv  3\pmod{4}$.  Combining the lemmas above, we see that
\[
\Prob\left(\begin{array}{ccc} v_2(\alpha^2+4yz) & = &k \\ \sqf(\alpha^2+4yz) & \equiv & 3\pmod{4} \end{array}\right) = 0
\]
if $k$ is odd or $k = 0$.  Suppose that $k \ge 2$ is even.  We have
\begin{eqnarray*}
\Prob\left(\begin{array}{ccc} v_2(\alpha^2+4yz) & = &k \\ \sqf(\alpha^2+4yz) & \equiv & 3\pmod{4} \end{array}\right)  & = & 
2^{-(3k/2+1)} + 2^{-(3k/2+1)} + 2^{-(k+2)} \sum_{\beta=1}^{k/2- 1} 2^{-\beta},
\end{eqnarray*}
where for $k=2$ the empty sum in the final term is $0$.  Arguing as above, it is now clear that this sum is $2$ times the analogous one for $\sqf(\alpha^2+4yz)  \equiv  1\pmod{8}$.  

Finally, we consider the computation for the case where $\sqf(\alpha^2+4yz)  \equiv  2\pmod{4}$.  Combining the lemmas above, we see that
\[
\Prob\left(\begin{array}{ccc} v_2(\alpha^2+4yz) & = &k \\ \sqf(\alpha^2+4yz) & \equiv & 2\pmod{4} \end{array}\right) = 0
\]
if $k$ is even or $k =1$.  Suppose $k \ge 3$ is odd.  We have 
\[
\Prob\left(\begin{array}{ccc} v_2(\alpha^2+4yz) & = &k \\ \sqf(\alpha^2+4yz) & \equiv & 2\pmod{4} \end{array}\right) = 
2^{-(3k/2-1/2)} + \sum_{\beta=1}^{\frac{k-1}{2}} 2^{-(k+\beta)}.
\]
Note that 
\[
\sum_{\beta=1}^{\frac{k-1}{2}} 2^{-(k+\beta)} = 2^{-(k+1)} \sum_{\beta = 0}^{\frac{k-3}{2}} 2^{-\beta} = 2^{-(k+1)} 
\left(2^{-(k/2-3/2)} + 2^{-(k/2-4/2)} + \cdots + 2^{-1} + 2^0\right).
\]
This gives
\begin{eqnarray*}
\Prob\left(\begin{array}{ccc} v_2(\alpha^2+4yz) & = &k \\ \sqf(\alpha^2+4yz) & \equiv & 2\pmod{4} \end{array}\right) & = &
2^{-(3k/2-1/2)} + \left(2^{-(3k/2-1/2)} +2^{-(3k/2-3/2)} +\cdots + 2^{-(k+1)}\right) \\
& = &
2^{-k}. 
\end{eqnarray*}

We now prove the three lemmas.
\begin{proof}[Proof of Lemma \ref{alpha small}]
Suppose $v_2(\alpha^2) = k < v_2(4yz)$.  Therefore $k$ is even and $v_2(\alpha^2+4yz) = k$.  The probability that $v_2(\alpha) = k/2$ is $2^{-(k/2+1)}$.  The probability that $v_2(4yz) = 2 + v_2(z) > k$ is the probability that $v_2(z) \ge k-1$, which is $1$ if $k=0$ and is $2^{-(k-1)}$ if $k \ge 2$ is even.

Write $\alpha = 2^{k/2} u$ where $u \in \Z_2^*$ and $4yz = 2^{k+1} \nu$ where $\nu \in \Z_2$.  If $k = 0$ we must have $v_2(\nu) \ge 1$.  By varying $z$, we see that $\nu$ is a Haar random element of $2\Z_2$ when $k = 0$ and is a Haar random element of $\Z_2$ otherwise.

We have
\[
\sqf(\alpha^2 + 4yz) = \sqf(u^2 + 2\nu) \equiv u^2+2\nu \pmod{8}.
\]
Since $u^2 \equiv 1 \pmod{8}$ we see that
\[
u^2+2\nu  \equiv \begin{cases}
1 \pmod{8} & \text{ if } v_2(\nu) \ge 2,\\
5 \pmod{8} & \text{ if } v_2(\nu) = 1,\\
3 \pmod{4} & \text{ if } v_2(\nu) = 0.
\end{cases}
\]
We see that $\sqf(\alpha^2 + 4yz) \equiv 1 \pmod{8}$ if and only if $v_2(z) \ge 2k+1$, which happens with probability $2^{-(2k+1)}$, that  $\sqf(\alpha^2 + 4yz) \equiv 5 \pmod{8}$ if and only if $v_2(z) = 2k$, which happens with probability $2^{-(2k+1)}$, and that $\sqf(\alpha^2 + 4yz) \equiv 3 \pmod{4}$ if and only if $v_2(z) = 2k-1$, which happens with probability $2^{-2k}$ if $k \ge 2$ and probability $0$ if $k =0$.
\end{proof}

\begin{proof}[Proof of Lemma \ref{alpha big}]
Suppose $v_2(4yz) = 2+v_2(z) = k < v_2(\alpha^2)$.  So $v_2(\alpha^2+4yz) = k$.  The probability that $v_2(z) = k-2$ is $2^{-(k-1)}$ if $k \ge 2$ and is $0$ otherwise.  If $k\ge 2$ is even,
\[
\Prob(v_2(\alpha^2) > k) = \Prob(v_2(\alpha) \ge k/2 + 1) = 2^{-(k/2+1)},
\]
and if $k$ is odd, 
\[
\Prob(v_2(\alpha^2) > k) = \Prob(v_2(\alpha) \ge k/2 + 1/2) = 2^{-(k/2+1/2)}.
\]
Suppose $v_2(z) = k-2$ where $k \ge 2$.  We write $z = 2^{k-2} u$ where $u \in \Z_2^*$, so $4yz = uy 2^k$. Suppose $v_2(\alpha^2) > k$.  If $k$ is even, then $\alpha = \gamma 2^{k/2+1}$ where $\gamma \in \Z_2$ is not necessarily a unit.  In this case, $\sqf(\alpha^2 + 4yz) = \sqf(4 \gamma + yu)$.  For a fixed value of $z$, by varying $y$ we see that $yu$ is a Haar random element of $\Z_2^*$.  Therefore, the probability that $\sqf(\alpha^2 + 4yz) \equiv 3\pmod{4}$ is $1/2$, and the probability that $\sqf(\alpha^2 + 4yz) \equiv i \pmod{8}$ is $1/4$ for $i \in \{1,5\}$.  This completes the proof in the case that $k$ is even. If $k$ is odd, then $\alpha = \gamma 2^{k/2+1/2}$ where $\gamma \in \Z_2$ is not necessarily a unit.  In this case, $\sqf(\alpha^2 + 4yz) = \sqf(4 \gamma + 2yu) \equiv 2\mod{4}$.  This completes the proof when $k \ge 3$ is odd.
\end{proof}

\begin{proof}[Proof of Lemma \ref{alpha equal}]
Suppose that $v_2(\alpha^2) = v_2(4yz) = 2 + v_2(z)$.  Since $v_2(\alpha^2) = 2 v_2(\alpha)$, we must have $v_2(\alpha^2) = v_2(4yz) = 2 + v_2(z) = 2\beta$ with $\beta \ge 1$.

Suppose $v_2(\alpha) = \beta$ and write $\alpha = 2^\beta u$ where $u \in \Z_2^*$.  Suppose that $v_2(z) = 2\beta -2$ and write $z = 2^{2\beta-2} \nu$ where $\nu \in \Z_2^*$.  So $4yz = 2^{2\beta} y \nu$.  For a fixed value of $z$, varying $y$ shows that $y\nu$ is a Haar random element of $\Z_2^*$.

We have $v_2(\alpha^2+4yz) = 2\beta + v_2(u^2+y\nu)$.  Since $u^2$ and $y\nu$ are both units, $v_2(u^2+y\nu) \ge 1$ and we can write $u^2+y\nu = 2\delta$ where $\delta \in \Z_2$.  Since $y\nu$ is a Haar random element of $\Z_2^*$, we see that $\delta$ is a Haar random element of $\Z_2$.  Suppose that $k - 2\beta \ge 0$.  Therefore,
\[
\Prob(v_2(u^2+y\nu) = k-2\beta) = \begin{cases} 
0 & \text{if } k - 2\beta = 0\\ 2^{-(k-2\beta)} & \text{otherwise}.
\end{cases}
\]
We have 
\[
\sqf(\alpha^2+4yz) = \sqf(u^2+y\nu) = \sqf(2\delta).
\]
If $v_2(\delta)$ is even, then $\sqf(2\delta) \equiv 2 \pmod{4}$.  If $v_2(\delta)$ is odd, then for some nonnegative integer $r$ we have $2\delta = 2^{2 r} \delta'$, where $\delta' \in \Z_2^*$, and $\sqf(2\delta) = \sqf(\delta')$.  If we restrict to any particular value of $r$, since $\delta$ is a Haar random element of $\Z_2$, we see that $\delta'$ is a Haar random element of $\Z_2^*$.  In particular, the probability that $\sqf(\alpha^2+4yz) \equiv 3 \pmod{4}$ is $1/2$ and the probability that $\sqf(\alpha^2+4yz) \equiv i \pmod{8}$ is $1/4$ for $i \in \{1,5\}$.

The probability that $v_2(\alpha) = \beta$ is $2^{-(\beta+1)}$.  The probability that $v_2(z) = 2\beta -2$ is $2^{-(2\beta-1)}$ if $\beta \ge 1$ and is $0$ if $\beta = 0$.  We note that 
\[
2^{-(\beta+1)} 2^{-(2\beta-1)}2^{-(k-2\beta)} = 2^{-(k+\beta)}.
\]
Considering the different cases for $\sqf(\alpha^2+4yz)$ modulo $4$ and $8$ completes the proof.
\end{proof}

\section{Future Work} \label{preview_2}

If the groups $G$ and $G'$ are not as large as possible (\emph{i.e.},~do not have index 3 in $\GL_2(\Z_2)$), or if $G \not \simeq G'$, then the proportion $\mathcal{P}$ of anomalous primes might be quite different than $1/30$, as the following example shows.

\begin{exm}
Let $E$ be the elliptic curve  \href{https://www.lmfdb.org/EllipticCurve/Q/1200/e/5}{{\tt 1200e5}} and $E'$ the curve \href{https://www.lmfdb.org/EllipticCurve/Q/1200/e/2}{{\tt 1200e2}}.  Both mod 4 representations have order 4 and neither mod 8 representation contains $-I$.  By inspecting the 2-adic representations, one can check that the only possible defects of anomalous primes are $(3,2)$ and $(2,3)$. In fact, more is true.

If we look explicitly at the images of the mod 4 representations, we see
\begin{align*}
G(4) &= \left\{ \begin{pmatrix} \pm 1 & 0 \\ 0 & \pm 1 \end{pmatrix} \right\} \\
G'(4) &= \left\{ \begin{pmatrix}  1 & 0 \\ 0 & \pm 1 \end{pmatrix} ,  \begin{pmatrix}  -1 & 2 \\ 0 & \pm 1 \end{pmatrix} \right\}.
\end{align*}
If $p$ is anomalous, then using the fact that $p \equiv 1 \pmod{4}$ and that the 2-Sylow subgroups of $E(\F_p)$ and $E'(\F_p)$ are both $\Z/2\Z \times \Z/2\Z$, we must have $F \equiv  -I \pmod{4}$ and $F' \equiv \left(\begin{smallmatrix}  -1 & 2 \\ 0 & -1 \end{smallmatrix} \right) \pmod{4}$. Therefore, every anomalous prime has defect $(3,2)$ and by the Chebotarev density theorem this is exactly 1/4 of all primes.
\end{exm}

In a forthcoming paper \cite{rnt}, we take up the problem of determining all possible values of $\mathcal{P}$, for all pairs of rationally 2-isogenous elliptic curves over $\Q$, including the case where $E$ and $E'$ have CM.  What makes this a finite task is that 
\begin{enumerate}
\item all images of 2-adic representations have been classified (\cite{rzb} for the non-CM case and \cite{alvaro} for the CM case), and
\item all isogeny-torsion graphs over $\Q$ have been classified in \cite{chiloyan} and \cite{chil-alvaro}. 
\end{enumerate}
There are additional consequences for the isogeny volcanoes attached to these curves that we explore as well, including how the torsion point fields $\Q(E[2^m])$ and $\Q(E'[2^m])$ are ``entangled''.   For example, we are able to show the following two results.
\begin{itemize}
\item If $\Q(E[2]) = \Q(E'[2])$ then $G$ and $G'$ must each have index greater than $3$ in $\GL_2(\Z_2)$.
\item If there are no primes of defect $(m+1,m)$ then we must have $\Q(E[2^m]) = \Q(E'[2^m])$ and $\Q(x(E[2^m])) = \Q(x(E'[2^m]))$.
\end{itemize}
We explore the consequences of these and similar results for anomalous primes.

\appendix

\section{Sample Calculations} \label{calculations}

Here we present some corroborating evidence for Theorem \ref{mainthm2} which served as the impetus for this project.  In the table below we present 15 pairs of curves whose 2-adic images have index 3 in $\GL_2(\Zl)$  and list the number of anomalous primes up to $2^{30}$.  The proportions listed are the number of anomalous primes divided by $\pi(2^{30}) = 54400028$.  One can see the 1/30 proportion very clearly emerging in the data.  These calculations were performed on \textsf{Magma} \cite{magma} by Andrew Sutherland and we thank him for allowing us to include these data in this paper.

\bigskip

\begin{center}
\begin{tabular}{|l|l|l|l|}
\hline
$E$ & $E'$ & Anomalous &  Proportion   \\
\hline
69a1 & 69a2 & 1814517 &0.033355075\\
\hline
77c1 & 77c2 & 1812315 & 0.033314597 \\
\hline
84b1 & 84b2 & 1813293 & 0.033332575 \\
\hline
99a1 & 99a2 & 1812977 & 0.033326766 \\
\hline
99c1 & 99c2 & 1812977 & 0.033326766 \\
\hline
132a1 & 132a2 & 1812966 & 0.033326564 \\
\hline
132b1 & 132b2 & 1812959 & 0.033326435 \\
\hline
138a1 & 138a2 & 1813813 &  0.033342134\\
\hline
141b1 & 142b2 & 1812863 & 0.033324670 \\
\hline
154a1 & 154a2 & 1812080 & 0.033310277 \\
\hline
154c1 & 154c2 & 1813344 & 0.033333512 \\
\hline
155b1 & 155b2 & 1813606 & 0.033338328 \\
\hline
156a1 & 156a2 & 1813340 & 0.033333439 \\
\hline
10608y1 & 10608y2 & 1812615 & 0.033320112 \\
\hline
10608j1 & 10608j2 & 1814206 & 0.033349358 \\
\hline
\end{tabular}
\end{center}

\bigskip

We also include some data for the pair $(E,E')$ of rationally 2-isogenous elliptic curves over $\Q$ where $E$ has \texttt{LMFDB} label \href{https://www.lmfdb.org/EllipticCurve/Q/69/a/2}{{\tt 69a2}} and $E'$ has label \href{https://www.lmfdb.org/EllipticCurve/Q/69/a/1}{{\tt 69a1}}.  We computed that there were 42298 anomalous primes less than $2\cdot 10^7$, a proportion of approximately $0.0333$ among all primes.  They are distributed by defect as follows: 
\begin{center}
\begin{tabular}{|ll|lr|}
\hline
(3,2): & 19821 & &\\
(2,3): & 19831 & \textbf{Total:} &39652 \\
\hline
(4,3): & 1264 &&\\
(3,4): & 1205& \textbf{Total:}&  2469 \\
\hline
(5,4): &84 &&\\
(4,5): & 86& \textbf{Total:}& 170 \\
\hline
(6,5): &3& &\\
(5,6): &4& \textbf{Total:}& 7 \\
\hline
\end{tabular}
\end{center}
\bigskip

We now look more closely at the 19821 of these anomalous primes with defect $(3,2)$ and divide them up into rows based on $\disc \mathcal{O}_0 \pmod{8}$ and columns based on the height of $V_p$, the isogeny volcano associated to $(E,E')$:
\begin{center}
\begin{tabular}{|c|c|c|c|c|c|c|}
\hline
$\disc \mathcal{O}_0\pmod{8}\ \backslash\ h(V_p)$ & 2 & 3 & 4 & 5 & 6 & $\ge$ 7 \\ 
\hline
1 & 4930 & 1279 & 322 & 76 & 22 & 7 \\
\hline
5 & 5024 & 1225 & 308 & 82 & 31 & 4 \\
\hline
0 & 2501 & 570 & 168 & 45 & 10 & 3 \\
\hline
4 & 2363 & 628 & 172 & 38 & 8 & 5 \\
\hline
\end{tabular}
\end{center}

\bigskip

We give an analogous table for the 1264 of these anomalous primes with defect $(4,3)$:
\begin{center}
\begin{tabular}{|c|c|c|c|c|c|}
\hline
$\disc \mathcal{O}_0\pmod{8}\ \backslash\ h(V_p)$ & 3 & 4 & 5 & 6 & $\ge$ 7 \\ 
\hline
1 & 305 & 73 & 20 & 5 & 2  \\
\hline
5 & 318 & 85 & 18 & 5 & 1 \\
\hline
0 & 155 & 40 & 13 & 5 & 0 \\
\hline
4 & 158 & 28 & 9 & 2 & 2 \\
\hline
\end{tabular}
\end{center}

In both cases observe that the values decrease roughly by a factor of 4 as we move along a row, as predicted by Conjecture \ref{volcano_conj}.

\vfill


\begin{thebibliography}{99}

\bibitem{aw} {\scshape Achter, Jeff, Wong, Siman}. Quotients of elliptic curves over finite fields. Int. J. Number Theory 9 (2013), no. 6, 1395 - 1412

\bibitem{adelman} {\scshape Adelmann, Clemens.} The decomposition of primes in torsion point fields. Lecture Notes in Mathematics, \textbf{1761}, Springer-Verlag, Berlin, 2001.

\bibitem{bisson}{\scshape Bisson, Gaetan, Sutherland, Andrew V.}
Computing the endomorphism ring of an ordinary elliptic curve over a finite field.
{\em J. Number Theory} {\bf 113} (2011) no. 5, 815 -- 831.
\mrev{2772473}
\zbl{1225.11085}
\doi{10.1016/j.jnt.2009.11.003}

\bibitem{chiloyan} {\scshape Chiloyan, Garen.} Infinite Families of Isogeny-Torsion Graphs. Preprint \href{https://arxiv.org/abs/2104.01128}{https://arxiv.org/abs/2104.01128}

\bibitem{chil-alvaro} {\scshape Chiloyan, Garen, Lozano-Robledo, Alvaro}  A classification of isogeny-torsion graphs of elliptic curves over Q (with Garen Chiloyan), in Transactions of the London Mathematical Society, Volume 8, Issue 1, December 2021, 1-34.

\bibitem{cullinan1} {\scshape Cullinan, John.} 
A remark on the group structure of elliptic curves in towers of finite fields.
{\em New York J. Math.} {\bf 24} (2018) 857--865.

\bibitem{cullinan2} {\scshape Cullinan, John.} A remark on the group structure of 2-isogenous elliptic curves in towers of finite fields. {\rm New York J. Math.} (2020) \textbf{26} 207-217. 

\bibitem{ckv}
{\scshape Cullinan, John, Kenney, Meagan, Voight, John} On a probabilistic local-global principle for torsion on elliptic curves. To appear, \emph{J. Th\'eorie Nombres Bordeaux}.


\bibitem{rnt} {\scshape Cullinan, John, et. al.} The probability of non-isomorphic group structures of isogenous elliptic curves in finite field extensions, II.  In preparation.

\bibitem{dt} {\scshape Duke, William, T\'oth, \'Arp\'ad}  The splitting of primes in division fields of elliptic curves. Experiment. Math. 11 (2002), no. 4, 555 - 565 (2003)


\bibitem{lauter} {\scshape Freeman, David, Lauter, Kristin.} Computing endomorphism rings of Jacobians of genus 2 curves over finite fields.

\bibitem{galbraith} {\scshape S.~Galbraith.} Mathematics of public key cryptography. Cambridge University Press, Cambridge, 2012

\bibitem{lmfdb} The LMFDB Collaboration, The L-functions and modular forms database, \href{http://www.lmfdb.org}{http://www.lmfdb.org}, 2021, [Online; accessed 11 August 2021].

\bibitem{heuberger} {\scshape Heuberger, Clemens,  Mazzoli, Michela.} 
Elliptic curves with isomorphic groups of points over finite field extensions. 
{\em J. Number Theory} {\bf 181} (2017), 89--98. 
\mrev{3689671}, 
\zbl{0677.2969},
\doi{10.1016/j.jnt.2017.05.028}

\bibitem{kohel}
{\scshape Kohel, David Russell.}
Endomorphism rings of elliptic curves over finite fields. Thesis (Ph.D.) -- University of California, Berkeley.
\mrev{2695524}

\bibitem{pairing}
{\scshape Ionica, Sorina, Joux, Antoine.}
Pairing the volcano.
{\em  Math. Comp.} {\bf 82} (2013), no. 281, 581--603.
\mrev{2983037}, 
\zbl{1278.11067},
\doi{10.1090/S0025-5718-2012-02622-6}



\bibitem{lenstra} 
{\scshape Lenstra, Hendrik W., Jr. }
Complex multiplication structure of elliptic curves.
{\em  J. Number Theory} {\bf 56} (1996), no. 2, 227--241. 
\mrev{1373549}, 
\zbl{1044.11590},
\doi{10.1006/jnth.1996.0015}

\bibitem{alvaro} 
{\scshape Lozano-Robledo,  \'Alvaro.} Galois representations attached to elliptic curves with complex multiplication.  Algebra and Number Theory, 16:4 (2022).

\bibitem{magma}
\textsc{W. Bosma}, \textsc{J. Cannon}, and \textsc{C. Playoust}, \textit{The Magma algebra system.\ I.\ The user language}, J.\ Symbolic Comput.\ \textbf{24} (3--4), 1997, 235--265.

\bibitem{miret} 
{\scshape Miret, J.; Moreno, R.; Rio, A.; Valls, M.} Determining the 2-Sylow subgroup of an elliptic curve over a finite field. Math. of Computation. 74 (2004) no. 249, 411 -- 427

\bibitem{miret2}
{\scshape Miret, J.; Moreno, R.; Sadornil, D.; Tena, J.; Valls, M.} An algorithm to compute volcanoes of 2-isogenies of elliptic curves over finite fields. Appl. Math. Comput. 176 (2006), no. 2, 739 - 750



\bibitem{rzb} 
{\scshape Rouse, Jeremy, Zurieck-Brown, David.} 
Elliptic curves over $\mathbf{Q}$ and 2-adic images of Galois.
\emph{Research in Number Theory}, \textbf{1} (2015).


\bibitem{rzb_data} {\scshape Rouse, Jeremy, Zurieck-Brown, David}. Images of the 2-adic Galois representation attached to elliptic curves. Electronic database at 
\href{http://users.wfu.edu/rouseja/2adic/}{http://users.wfu.edu/rouseja/2adic/}

\bibitem{silverman} {\scshape Silverman, J. H.} The arithmetic of elliptic curves. Graduate Texts in Mathematics, \textbf{106}. Springer, 2009. 

\bibitem{sutherland_volcano}
{\scshape Sutherland, Andrew V.}
Isogeny volcanoes.
{\em  ANTS X -- Proceedings of the Tenth Algorithmic Number Theory Symposium} (2013), 507--530.
\mrev{3207429},
\doi{10.2140/obs.2013.1.507}


\bibitem{velu} {\scshape V\'elu, Jacques.} Isog\'enies entre courbes elliptiques. {\em C.\ R.\ Acad.\ Sci.\ Paris S\'er. A-B} \textbf{273} (1971), A238--A241.

\bibitem{wittmann}
{\scshape Wittmann, Christian.}
Group structure of elliptic curves over finite fields. 
{\em  J. Number Theory} {\bf 88}, no. 2, (2001), 335--344.
\mrev{1832010}, 
\zbl{1047.11062},
\doi{10.1006/jnth.2000.2622}




\end{thebibliography}
\end{document}